\newcommand{\sO}{{\mathcal O}}
\newcommand{\sU}{{\mathcal U}}
\newcommand{\sX}{{\mathcal X}}
\newcommand{\scrB}{{\mathscr B}}
\newcommand{\scrC}{{\mathscr C}}
\newcommand{\scrE}{{\mathscr E}}
\newcommand{\scrF}{{\mathscr F}}
\newcommand{\scrH}{{\mathscr H}}
\newcommand{\scrP}{{\mathscr P}}
\newcommand{\scrS}{{\mathscr S}}
\newcommand{\scrX}{{\mathscr X}}
\newcommand{\scrY}{{\mathscr Y}}
\newcommand{\scrZ}{{\mathscr Z}}
\newcommand{\C}{{\mathbb C}}
\newcommand{\HH}{{\mathbb H}}
\newcommand{\N}{{\mathbb N}}
\renewcommand{\P}{{\mathbb P}}
\newcommand{\Q}{{\mathbb Q}}
\newcommand{\R}{{\mathbb R}}
\newcommand{\Z}{{\mathbb Z}}
\newcommand{\Br}{{\operatorname{Br}}}
\newcommand{\id}{{\rm id}}
\newcommand{\chern}{{\rm c}}
\newcommand{\codim}{\operatorname{codim}}
\newcommand{\coker}{\operatorname{coker}}
\newcommand{\Ext}{\operatorname{Ext}}
\newcommand{\Hilb}{{\rm Hilb}}
\renewcommand{\Im}{\operatorname{Im}}
\newcommand{\into}{{\, \hookrightarrow\,}}
\newcommand{\isom}{{\ \cong\ }}
\newcommand{\lt}{{\rm{lt}}}
\renewcommand{\O}{{\rm O}}
\newcommand{\ol}[1]{{\overline{#1}}}
\newcommand{\onto}{{{\twoheadrightarrow}}}
\newcommand{\ot}[1][]{\xleftarrow{\ #1\ }}
\newcommand{\Pic}{\operatorname{Pic}}
\renewcommand{\Re}{\operatorname{Re}}
\newcommand{\ratl}{\dashrightarrow}
\newcommand{\reg}{{\operatorname{reg}}}
\newcommand{\rk}{{\rm rk}}
\newcommand{\sing}{{\operatorname{sing}}}
\newcommand{\Stab}{{\operatorname{Stab}}}
\newcommand{\st}{{\operatorname{1st}}}
\newcommand{\Sym}{{\rm Sym}}
\renewcommand{\to}[1][]{\xrightarrow{\ #1\ }}
\newcommand{\tensor}{\otimes}
\newcommand{\tr}{{\rm tr}}
\newcommand{\vphi}{\varphi}
\newcommand{\ul}[1]{{\underline{#1}}}
\renewcommand{\st}{{\rm st}}
\newcommand{\hyplat}{{\mathcal{H}}}
\newcommand{\gothm}{{\mathfrak{m}}}
\newtheoremstyle{citing}
  {}
  {}
  {\itshape}
  {}
  {\bfseries}
  {\textbf{.}}
  {.5em}
  {\thmnote{#3}}
\theoremstyle{plain}
\newtheorem{theorem}[subsection]{Theorem}
\newtheorem{lemma}[subsection]{Lemma}
\newtheorem{proposition}[subsection]{Proposition}
\newtheorem{corollary}[subsection]{Corollary}
\theoremstyle{definition}
\newtheorem{definition}[subsection]{Definition}
\newtheorem{Ex}[subsection]{Example}
\numberwithin{equation}{section}
\theoremstyle{remark}
\newtheorem{setup}[subsection]{Setup}
\newtheorem{remark}[subsection]{Remark}
\theoremstyle{citing}
\newcommand{\gothN}{{\mathfrak{N}}}
\newcommand{\gothM}{{\mathfrak{M}}}
\newcommand{\Hdg}{\operatorname{Hdg}}
\newcommand{\cone}{{\operatorname{cone}}}
\newcommand{\Exz}{{\operatorname{Exc}}}
\newcommand{\tX}{{Y}}
\newcommand{\Def}{{\operatorname{Def}}}
\newcommand{\deflt}[1][X]{{\Def^\lt(#1)}}
  \def\textsuperscript#1{\textasciicircum(#1)}%
\newcommand{\Ch}{{\textrm{Ch}}}
\newcommand{\Kthree}{$K3\;$}
\newcommand{\Kthreen}{$K3^{[n]}$}
\renewcommand{\tilde}{\widetilde}
\def\SO{\operatorname{SO}}
\def\Mon{\operatorname{Mon}}
\newcommand{\res}{\mathrm{res}}
\renewcommand{\div}{\operatorname{div}}
\def\rrk{\operatorname{rrk}}
\theoremstyle{plain}
\newtheorem{question}[subsection]{Question}
\theoremstyle{remark}
\newtheorem{example}[subsection]{Example}
\newcommand*{\da@rightarrow}{\mathchar"0\hexnumber@\symAMSa 4B }
\newcommand*{\da@leftarrow}{\mathchar"0\hexnumber@\symAMSa 4C }
\newcommand*{\xdashrightarrow}[2][]{%
  \mathrel{%
    \mathpalette{\da@xarrow{#1}{#2}{}\da@rightarrow{\,}{}}{}%
  }%
}
\newcommand{\xdashleftarrow}[2][]{%
  \mathrel{%
    \mathpalette{\da@xarrow{#1}{#2}\da@leftarrow{}{}{\,}}{}%
  }%
}
\newcommand*{\da@xarrow}[7]{%
  \sbox0{$\ifx#7\scriptstyle\scriptscriptstyle\else\scriptstyle\fi#5#1#6\m@th$}%
  \sbox2{$\ifx#7\scriptstyle\scriptscriptstyle\else\scriptstyle\fi#5#2#6\m@th$}%
  \sbox4{$#7\dabar@\m@th$}%
  \dimen@=\wd0 %
  \ifdim\wd2 >\dimen@
    \dimen@=\wd2 %
  \fi
  \count@=2 %
  \def\da@bars{\dabar@\dabar@}%
  \@whiledim\count@\wd4<\dimen@\do{%
    \advance\count@\@ne
    \expandafter\def\expandafter\da@bars\expandafter{%
      \da@bars
      \dabar@ 
    }%
  }%
  \mathrel{#3}%
  \mathrel{%
    \mathop{\da@bars}\limits
    \ifx\\#1\\%
    \else
      _{\copy0}%
    \fi
    \ifx\\#2\\%
    \else
      ^{\copy2}%
    \fi
  }%
  \mathrel{#4}%
}
\title[Global Torelli for singular symplectic varieties]{A global Torelli theorem for singular symplectic varieties}
\author{Benjamin Bakker}
\address{Benjamin Bakker\\ Department of Mathematics\\ University of Illinois at Chicago\\ 851 S. Morgan St., Chicago,
IL 60607}
\email{bakker.uic@gmail.com}
\author{Christian Lehn}
\address{Christian Lehn\\ Fakult\"at f\"ur Mathematik\\ Technische Universit\"at Chemnitz\\
Reichenhainer Stra\ss e 39, 09126 Chemnitz, Germany}
\email{christian.lehn@mathematik.tu-chemnitz.de}
\let\origmaketitle\maketitle
\def\maketitle{
  \begingroup
  \def\uppercasenonmath##1{} 
  \let\MakeUppercase\relax 
  \origmaketitle
  \endgroup
}
\begin{document}
\thispagestyle{empty}

\begin{abstract}
We systematically study the moduli theory of symplectic varieties (in the sense of Beauville) which admit a resolution by an irreducible symplectic manifold.  In particular, we prove an analog of Verbitsky's global Torelli theorem for the locally trivial deformations of such varieties.  Verbitsky's work on ergodic complex structures replaces twistor lines as the essential global input.  In so doing we extend many of the local deformation-theoretic results known in the smooth case to such (not-necessarily-projective) symplectic varieties.  We deduce a number of applications to the birational geometry of symplectic manifolds, including some results on the classification of birational contractions of \Kthreen-type varieties.  
\end{abstract}
\subjclass[2010]{32G13, 53C26, (primary); 14B07, 14J10, 32S45, 32S15 (secondary).}
\keywords{hyperk\"ahler manifold, symplectic variety, global torelli, locally trivial deformation, Bridgeland stability conditions}

\maketitle

\setlength{\parindent}{1em}
\setcounter{tocdepth}{1}


\tableofcontents

\section{Introduction}\label{sec intro}
\thispagestyle{empty}

The local and global deformation theories of irreducible holomorphic symplectic manifolds enjoy many beautiful properties. For example, unobstructedness of deformations \cite{Bo78,Ti,To89} and the local Torelli theorem \cite{B} are at the origin of many results on symplectic manifolds. Among the highlights of the global theory are Huybrechts' surjectivity of the period map \cite{Huy99} and Verbitsky's global Torelli theorem \cite{V13} (see for example Markman's survey article \cite{markmantor} and Huybrechts' Bourbaki talk \cite{huybourbaki}). 
Verbitsky's result has since paved the way for many important developments, with applications to a wide variety of questons such as:  birational boundedness \cite{Ch16}, lattice polarized mirror symmetry \cite{Ca16}, algebraic cycles \cite{CP}, hyperbolicity questions \cite{KLV} and many more.  Recent progress in MMP and interest in singular symplectic varieties \cite{GKP} has made it apparent that a global deformation theory of singular symplectic varieties would be equally valuable.  As for smooth varieties, it is of utmost importance here to consider not-necessarily-projective K\"ahler varieties---even if one is mainly interested in moduli spaces of projective varieties, see e.g. Remark \ref{remark ghs}.

In the present article we initiate a systematic study of the moduli theory of compact K\"ahler singular symplectic varieties, beginning with those that admit symplectic resolutions by irreducible symplectic manifolds. 
We prove general results concerning their deformation theory and building on this we develop a global moduli theory for locally trivial families of such varieties. 
To fix ideas, recall that a \emph{symplectic variety} $X$ in the sense of Beauville \cite[Definition 1.1]{Be} is a normal complex variety whose regular part admits a nondegenerate holomorphic 2-form that extends to some (hence any) resolution.  Usually the extension will have zeroes, but if there is a resolution $\pi:Y\to X$ by an (irreducible) holomorphic symplectic manifold $Y$, we call $\pi$ an \emph{(irreducible) symplectic resolution}.

Namikawa shows \cite[Theorem 2.2]{Na01} that a projective variety $X$ admitting a resolution $\pi:Y\to X$ by a symplectic manifold $Y$ has unobstructed deformations, and that there is a natural finite map $\Def(Y)\to\Def(X)$.  However, a generic deformation of $\pi$ becomes an isomorphism, and therefore the only natural period map is that of the resolution $Y$.
From a Hodge-theoretic perspective it is therefore more natural to consider the \emph{locally trivial} deformations of $X$ as in this case the pure weight two Hodge structure on $H^2(X,\Z)$ varies in a local system, and the resulting theory is very closely analogous to the smooth situation. 
Our first result is the following

\begin{theorem}[see Propositions \ref{prop defo} and \ref{proposition nonproj deform}]\label{theorem def smooth} Let $\pi:Y\to X$ be an irreducible symplectic resolution of a symplectic variety $X$ and $N=N_1(Y/X)\subset H_2(Y,\Z)$ the group of 1-cycles contracted by $\pi$.  The base space $\Def^\lt(X)$ of the universal locally trivial deformation  is smooth, and there is a diagram
\[
\xymatrix{
\scrY \ar[d]\ar[r]& \scrX \ar[d]\\
\Def(Y,N) \ar[r]^{\cong} & \Def^\lt(X) \\
}
\]
where $\scrX\to \Def^\lt(X)$ is the universal locally trivial deformation of $X$, $\scrY\to\Def(Y,N)\subset\Def(Y)$ is the restriction of the universal deformation of $Y$ to the closed subspace along which $N$ remains Hodge, and $\scrY\to\scrX$ specializes to $\pi$.
\end{theorem}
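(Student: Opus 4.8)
The plan is to deduce the whole statement from the smoothness of $\Def(Y,N)$ together with the construction of a simultaneous contraction over it. First I would make precise that $\Def(Y,N)\subset\Def(Y)$ is smooth. By the local Torelli theorem for the irreducible symplectic manifold $Y$, the period map $\Def(Y)\to\mathbb{P}(H^2(Y,\C))$ is a local isomorphism onto an open subset of the period domain $\{[\sigma]:q(\sigma,\sigma)=0,\ q(\sigma,\bar\sigma)>0\}$. A class $\beta\in N\subset H_2(Y,\Z)$ stays algebraic exactly when the period lies in the hyperplane $\beta^\perp=\{\langle-,\beta\rangle=0\}$, so $\Def(Y,N)$ is the preimage of the linear section $\bigcap_{\beta\in N}\beta^\perp$ of the period domain, hence smooth of codimension $\rk N$ in $\Def(Y)$. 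Thus the asserted smoothness of $\Def^\lt(X)$ will follow automatically once the isomorphism $\Def(Y,N)\cong\Def^\lt(X)$ is in place, and the real content is that isomorphism.

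The heart of the argument is to spread out the contraction. Restricting the universal family of $Y$ gives $\scrY\to\Def(Y,N)$, and I would argue that $\pi$ deforms to a proper morphism $\scrY\to\scrX$ over $\Def(Y,N)$ restricting to $\pi$ on the central fibre. The contraction $\pi$ is determined by a relatively nef class $L$ on $Y$ with $L\cdot\beta=0$ for $\beta\in N$; precisely because $N$ remains algebraic the class $L$ stays of type $(1,1)$ and remains contracting in a neighbourhood of the central fibre, so one obtains $\scrX\to\Def(Y,N)$ together with a morphism $\scrY\to\scrX$ specializing to $\pi$. This supplies both verticals and the top horizontal arrow of the diagram.

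Next I would check that $\scrX\to\Def(Y,N)$ is locally trivial, and then invert. Because $N$ is held fixed the exceptional locus of $\scrY\to\scrX$ deforms with constant combinatorial and analytic type, so each fibre of $\scrX$ has a symplectic singularity of the same analytic type as $X$; building local trivializations of $\scrX$ from those of $\scrY$ adapted to the exceptional divisors shows $\scrX\to\Def(Y,N)$ is equisingular, hence locally trivial. By the universal property of the locally trivial deformation this yields a classifying morphism $\Phi\colon\Def(Y,N)\to\Def^\lt(X)$. Conversely, given the universal locally trivial family $\scrX\to\Def^\lt(X)$, local triviality keeps the singularity type constant, so the fixed resolution $\pi$ deforms to a simultaneous resolution of this family; since the contracted classes persist, this is classified by $\Psi\colon\Def^\lt(X)\to\Def(Y,N)$. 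Contraction and resolution being mutually inverse operations on these families, $\Phi$ and $\Psi$ are inverse, proving $\Def(Y,N)\cong\Def^\lt(X)$ and, with the first paragraph, the smoothness of $\Def^\lt(X)$. As a consistency check on tangent spaces, the symplectic identifications $\sT_Y\cong\Omega^1_Y$ and $\sT_X\cong\Omega^{[1]}_X$ together with the extension theorem for reflexive differentials give $\pi_*\sT_Y\cong\sT_X$, and the Leray spectral sequence identifies the $N$-preserving part of $H^1(Y,\sT_Y)$ with $H^1(X,\sT_X)$.

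The main obstacle is the middle step: deforming the contraction and proving that the contracted family is genuinely locally trivial rather than merely fibrewise singular of constant type. This is exactly the bridge between the Hodge-theoretic condition that $N$ stays algebraic on the side of $Y$ and the geometric condition of local triviality on the side of $X$, and it is where one must control the singularities of $\scrX$ \emph{uniformly} over the base; the tangent-space comparison and smoothness are then formal consequences.
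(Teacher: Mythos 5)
Your reduction collapses at the step where you spread out the contraction. You claim that ``precisely because $N$ remains algebraic the class $L$ stays of type $(1,1)$'' over $\Def(Y,N)$; this is false. The locus $\Def(Y,N)$ is cut out by the condition that the period stays orthogonal to $N$ and nothing more, whereas $L=\pi^*\alpha$ (for $\alpha$ ample or K\"ahler on $X$) lies in $N^\perp$ with $q(L)>0$, so ``$L$ stays of type $(1,1)$'' is an \emph{additional} Noether--Lefschetz condition of codimension one, not implied by membership in $\Def(Y,N)$. Concretely, the very general fiber of $\scrY\to\Def(Y,N)$ has rational Picard group equal to $\tilde q^{-1}(N)_\Q$, which is negative definite by Lemma \ref{lemma symplektisch}; such a fiber is not even projective and carries no big and nef line bundle at all, yet it still admits the contraction. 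For exactly this reason the paper never deforms a polarizing class: the morphism of families $\scrY\to\scrX$ and the map $p:\Def(Y)\to\Def(X)$ exist a priori by \cite[Proposition 11.4]{KM}, valid for any resolution of rational singularities, and the real work consists in identifying $p^{-1}(\Def^\lt(X))$ with $\Def(Y,N)$.

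The second, equally serious, gap is your claim that the contracted family is locally trivial because it is ``equisingular.'' Constancy of the analytic type of the exceptional locus and of the fibrewise singularities is not justified at that point of the argument (a priori both can degenerate over special points of $\Def(Y,N)$), and even granting it, descending trivializations from $\scrY$ to $\scrX$ ``adapted to the exceptional divisors'' is precisely the statement the authors say they cannot prove: see Question \ref{question contraction deforms locally trivially}, which asks whether $\scrY\to\scrX$ is a locally trivial deformation of $Y\to X$ and is left open. The paper sidesteps this by running the comparison in the opposite direction: $\scrX\to\Def^\lt(X)$ is \emph{defined} as the universal locally trivial deformation (which exists by \cite{FK}), smoothness of $\Def^\lt(X)$ is proved first and independently via the $T^1$-lifting principle (Theorem \ref{theorem deflt is smooth}, resting on the relative Hodge-theoretic statements of Lemma \ref{lemma hodge relativ} and Proposition \ref{proposition gkkp for symplectic}), then $p^{-1}(\Def^\lt(X))=\Def(Y,N)$ is established using constancy of $h^{1,1}$ in locally trivial families, and finally $p\vert_{\Def(Y,N)}$ is an isomorphism because its differential is bijective and both germs are smooth. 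This also undercuts your plan to obtain smoothness of $\Def^\lt(X)$ as a corollary of the isomorphism: in the paper's (and any currently available) argument, smoothness is an essential \emph{input} to the comparison---both in Corollary \ref{corollary deformation remains isv} and in the final tangent-space argument---not an output of it.
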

 The case of a divisorial contraction $\pi:Y\to X$ of a projective symplectic variety was treated by Pacienza and the second author in \cite[Proposition 2.3]{LP}, where it was shown that locally trivial deformations of $X$ correspond to deformations of $Y$ such that all irreducible components of the exceptional divisor $\Exz(\pi)$ of $\pi$ deform along. The space $\Def^\lt(X)$ of locally trivial deformations of $X$ is smooth of dimension $h^{1,1}(X)=h^{1,1}(Y)-m$ where $m$ is the number of irreducible components of $\Exz(\pi)$.  This description is equivalent to that of Theorem \ref{theorem def smooth}, since the Beauville--Bogomolov--Fujiki form $q_Y$ on $H^2(Y,\Z)$ yields an isomorphism $\tilde q_Y: H^2(Y,\Q)\xrightarrow{\cong} H_2(Y,\Q)$ identifying the subspace of $H^2(Y,\Q)$ spanned by $\Exz(\pi)$ with $N_\Q$.  In the case where $\pi$ is a small contraction, $X$ is not $\Q$-factorial, but we can still recast the description in Theorem \ref{theorem def smooth} in terms of line bundles:  under $\tilde q_Y$, the Hodge classes in the orthogonal $N^\perp\subset H^2(Y,\Q)$ are the $\Q$-line bundles on $Y$ that vanish on $N$ and can be pushed forward to $\Q$-line bundles on $X$ (as the singularities are rational).

An important theorem of Huybrechts \cite[Theorem 2.5]{Huy} shows that birational \footnote{We will use the term birational instead of bimeromorphic for compact K\"ahler varieties as well.}
irreducible holomorphic symplectic manifolds are deformation equivalent.  Of course, birational singular symplectic varieties are not necessarily locally trivially deformation equivalent, and the correct analog of Huybrechts' theorem is the following. 

\begin{theorem}[See Theorem \ref{theorem huybrechts}]\label{intro cor div}
Let $\pi:Y \to X$ and $\pi':Y' \to X'$ be irreducible symplectic resolutions of symplectic varieties $X$, $X'$. Assume that there is a birational map $\phi: Y \ratl Y'$ such that the induced map $\phi^*: H^2(\tX',\C) \to H^2(\tX,\C)$ sends $H^2(X',\C)$ isomorphically to $H^2(X,\C)$.  Then $X$ and $X'$ are locally trivial deformations of one another.
\end{theorem}

Let us now turn to global moduli theoretic questions. Fixing a lattice $\Lambda$, there is a natural locally trivial $\Lambda$-marked moduli space $\mathfrak{M}^\lt_\Lambda$ obtained by gluing the universal locally trivial deformation spaces together, and we also have a corresponding notion of parallel transport operator.  Further, there is a period map $P:\mathfrak{M}^\lt_\Lambda\to\Omega_\Lambda$ to the associated period domain $\Omega_\Lambda$ that is a local isomorphism.  

The following is an analog of Verbitsky's global Torelli theorem \cite[Theorem 1.17]{V13}, also encompassing analogs of Huybrechts' surjectivity of the period map \cite[Theorem 8.1]{Huy99}, and Markman's description of the fibers of the period map in terms of the K\"ahler cone decomposition \cite[Theorem 5.16]{markmantor}. 

\begin{theorem}\label{intro theorem torelli}Let $X$ be a symplectic variety with $b_2(X)>4$ {that admits an irreducible symplectic resolution}.  Let $\mathfrak{N}^\lt_\Lambda\subset \mathfrak{M}^\lt_\Lambda$ be a connected component containing $(X,\nu)$.  Then
\begin{enumerate}
\item The period map $P:\mathfrak{N}^\lt_\Lambda\to\Omega_\Lambda$ is surjective, {generically injective,} and the points in any fiber are pairwise nonseparated.  {Moreover, varieties underlying points in the same fiber are birational.}
\item The points in the fiber containing $(X,\nu)$ are in bijective correspondence with the cones obtained by restricting the K\"ahler chambers of a resolution to $H^{1,1}(X,\R)$.
\item   The locally trivial weight two monodromy group $\Mon^2(X)^\lt$ is a finite index subgroup of $\O(H^2(X,\Z))$. 
\end{enumerate}
\end{theorem}

See Section \ref{sec monodromy} for more precise statements. The finiteness of the index of the monodromy group in the smooth case is proved by Verbitsky \cite[Theorem 2.6]{Ve19}.

The analogous results in the smooth case heavily rely on the existence of a hyperk\"ahler metric, the theory of twistor lines, and deformations of complex structures. This presents a major difficulty for singular varieties as the aforementioned techniques are not available or much less understood as in the smooth case. 
We therefore deduce the largeness of the image of the period map by density results built on Ratner theory (as first explored in this context by Verbitsky in \cite{V15}, see e.g. Theorem 3.9 and Theorem 4.8 of that paper and Proposition \ref{proposition verbitsky} in Section 5 of the present paper), and this is responsible for the numerical conditions on $b_2(X)$.  To the best of our knowledge this is the first general result in this direction which makes a statement about \emph{large} deformations of singular symplectic varieties.  Note that $X$ with $b_2(X)=3$ are locally trivially rigid in the sense that their only locally trivial deformation is what should be the (unique) twistor deformation, and unlike in the smooth case we can give examples (see Remark \ref{remark rigid}). 
These density results have many applications, and for instance we have the following application pointed out by Amerik--Verbitsky \cite[Theorem 5.6]{AV19}.
\begin{theorem}[See Corollary \ref{corollary can contract}]\label{theorem can contract}Let $Y$ be a smooth irreducible symplectic manifold with $b_2(Y)>5$ and $\tau\subset H^{1,1}(Y,\R)$ a face\footnote{By a face of an open cone we mean a linear subspace meeting the closure with nonempty interior.} of the K\"ahler cone meeting the positive cone for which\footnote{By this we mean the dimension of the real linear subspace generated by $\tau$.  Note that in this case we know the K\"ahler cone is locally polyhedral in the positive cone---see Remark \ref{remark MBM gen}.} $\dim\tau>2$.  Then there is a birational contraction $\pi:Y\to X$ contracting precisely $\tau^\perp$.
\end{theorem}
Somewhat surprisingly, we also obtain a generalization to the non-projective setting of a result of Namikawa, see \cite[Theorem (2.2)]{Na01}.

  \begin{theorem}[See Proposition \ref{prop namikawa general}]\label{thm namikawa general}Let $\pi:Y\to X$ be an irreducible symplectic resolution with $b_2(X)>4$.  Then $\Def(X)$ is smooth of the same dimension as $\Def(Y)$ and the induced map $p:\Def(Y)\to\Def(X)$ is finite.
  \end{theorem}
This is noteworthy because one important ingredient in Namikawa's proof of the corresponding statement for projective varieties is a vanishing theorem by Steenbrink  which we do not know to hold in the K\"ahler case. Our proof works by reduction to the projective case using the monodromy action.

Much more can be said when $X$ admits a resolution deformation equivalent to a Hilbert scheme of points on a $K3$ surface.  The possible projective contractions $M\to X$ of a moduli space of sheaves $M$ on a $K3$ surface is completely described by wall-crossing in the space of Bridgeland stability conditions by work of Bayer--Macr\`i \cite{BM}; 
Theorem \ref{theorem def smooth} combined with density results then implies that no new singularities occur for general $X$:

\begin{theorem}[See Proposition \ref{proposition def to bridge}]Let $\pi:Y\to X$ be an irreducible symplectic resolution where $Y$ is a \Kthreen-type manifold.  Provided $b_2(X)>4$, $X$ is locally trivially deformation equivalent to a wall-crossing contraction of a moduli space of Bridgeland stable objects on a K3 surface.  
\end{theorem}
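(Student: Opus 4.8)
The plan is to reduce the statement, via the surjectivity of the period map, to a single well-chosen member of the locally trivial deformation family of $X$, and then to apply the Bayer--Macrì classification of contractions of moduli spaces \cite{BM}. Fix a marking identifying $H^2(X,\Z) \iso \Lambda$ and let $\mathfrak{N}^\lt_\Lambda \subset \mathfrak{M}^\lt_\Lambda$ be the connected component containing the marked $X$. Since any two members of $\mathfrak{N}^\lt_\Lambda$ are locally trivially deformation equivalent, it suffices to produce one member $X_0 \in \mathfrak{N}^\lt_\Lambda$ whose resolution is a moduli space of Bridgeland stable objects on a K3 surface and whose contraction is a wall-crossing contraction.

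By Theorem \ref{theorem def smooth} every $X_0 \in \mathfrak{N}^\lt_\Lambda$ carries an irreducible symplectic resolution $M \to X_0$ obtained as a locally trivial deformation of $\pi : Y \to X$ along which the contracted class $N = N_1(Y/X)$ stays algebraic; in particular $M$ is again of $\iKn$-type and, because local triviality preserves the analytic singularity type, $X_0$ remains singular with a nontrivial contracted class $N'$ deforming $N$. Now a projective manifold of $\iKn$-type is isomorphic, or at least birational, to a moduli space of Bridgeland stable objects on a K3 surface precisely when its weight-two Hodge structure extends to a Hodge structure of K3 type on the Mukai lattice $\tilde\Lambda = U^4 \oplus E_8(-1)^2$, in which $H^2$ is cut out as $v^\perp$ by a primitive vector $v$ with $v^2 = 2n-2$. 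By the surjectivity of the period map for K3 surfaces, this condition determines a dense subset $\mathcal{U} \subset \Omega_\Lambda$ of the period domain.

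Since the period map $P : \mathfrak{N}^\lt_\Lambda \to \Omega_\Lambda$ is surjective by Theorem \ref{intro theorem torelli}, I may pick a period $p \in \mathcal{U}$ and a member $X_0 \in P^{-1}(p)$ whose resolution is a moduli space $M = M_\sigma(S,v)$ of $\sigma$-stable objects on a K3 surface $S$. The contraction $M \to X_0$ contracts exactly the curve classes spanning the deformed class $N'$, and by the wall-and-chamber analysis of Bayer--Macrì \cite{BM} every such birational contraction of a moduli space of Bridgeland stable objects is induced by crossing a wall in the space of Bridgeland stability conditions. Thus $X_0$ is a wall-crossing contraction, and being locally trivially deformation equivalent to $X$, the theorem follows.

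The main obstacle is the density of $\mathcal{U}$ in the \emph{restricted} period domain $\Omega_\Lambda$ attached to $X$ — that is, after imposing that the contracted class $N$ remain of Hodge type — rather than in the full period domain of $Y$. This is where the hypothesis $b_2(X) > 3$ enters: the Beauville--Bogomolov--Fujiki form endows $\Lambda$ with signature $(3, b_2(X)-3)$, so that $b_2(X) - 3 \geq 1$, which both makes the Ratner/Verbitsky density of ergodic complex structures underlying the surjectivity of $P$ applicable and supplies enough room to embed the transcendental lattice of a generic $p \in \Omega_\Lambda$ primitively into the K3 lattice $U^3 \oplus E_8(-1)^2$ with a compatible Hodge structure. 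A secondary point requiring care is to check that the contraction $M \to X_0$ obtained at the special period is genuinely the wall-crossing contraction attached to the class $N'$, which is ensured by tracking $N'$ through the deformation via Theorem \ref{theorem def smooth}.
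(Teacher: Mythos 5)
Your overall skeleton is the same as the paper's proof of Proposition \ref{proposition def to bridge}: reduce to exhibiting a single member of the connected component $\mathfrak{N}^\lt_\Lambda$ whose resolution is a Bridgeland moduli space, then invoke Bayer--Macr\`i to identify its contraction as a wall-crossing contraction. However, the step that carries all the content --- the density in $\Omega_\Lambda$ of periods corresponding to Bridgeland moduli spaces --- is not established by your argument, and it fails on two counts. First, your criterion is vacuous: by Markman's result \eqref{extperiod}, the weight-two Hodge structure of \emph{every} \Kthreen-type manifold extends to a Hodge structure on the Mukai lattice $\tilde\Lambda_{K3}$ in which $H^2$ is cut out as $v^\perp$ with $v^2=2n-2$. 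The actual criterion (recalled in Section \ref{subsec bridgeland stability}, after Huybrechts and Addington) is that the \emph{algebraic part} of this Mukai Hodge structure contains a hyperbolic plane $U$ (integrally and primitively if one wants an untwisted K3), equivalently that the transcendental lattice is Hodge isometric to that of a projective, possibly twisted, K3 surface. Second, the set $\mathcal{U}$ of such periods is nowhere near generic: a Bridgeland moduli space is projective, so its transcendental lattice has signature $(2,\ast)$, whereas for a very general $\omega\in\Omega_\Lambda$ the transcendental lattice of the corresponding variety is all of $\Lambda$, of signature $(3,b_2(X)-3)$ by Lemma \ref{lemma symplektisch}; at such a period the resolution is not even projective. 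Hence $\mathcal{U}$ lies in the countable union of Noether--Lefschetz hyperplanes, its density cannot follow from genericity or from ``surjectivity of the period map for K3 surfaces,'' and your closing suggestion to embed the transcendental lattice of a \emph{generic} $p\in\Omega_\Lambda$ into the K3 lattice with a compatible Hodge structure is impossible for exactly this signature reason.

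What is needed, and what the paper actually does, is an arithmetic argument: choose a primitive embedding $U\subset\tilde\Lambda_{K3}$ containing $v$, so that $U\cap\Lambda$ is negative definite of rank at most one and every period in the hyperplane section $(U\cap\Lambda)^\perp\cap\Omega_\Lambda$ makes $U$ algebraic (when $b_2(X)=4$ one instead finds a rational $U$ inside $\Lambda^\perp\otimes\Q$ using Meyer's theorem, which is why only a twisted K3 surface is obtained in that case); then conclude density of the monodromy orbits of such special periods in $\Omega_\Lambda$ from the Ratner-type description of orbit closures in Remark \ref{remark verbitsky}, which applies because $\rk\,(\Lambda\cap U^\perp)\geq 4$ guarantees periods of transcendental rational rank zero inside this hyperplane. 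Two more minor points: invoking surjectivity of the period map is heavier than necessary, since density of $\mathcal{U}$ together with openness of the image of $P$ (local Torelli) already produces the desired member of $\mathfrak{N}^\lt_\Lambda$; and to know that the resolution of your chosen $X_0$ is itself a Bridgeland moduli space, rather than merely birational to one, you must also cite Bayer--Macr\`i's theorem that every smooth symplectic birational model of a Bridgeland moduli space is again such a moduli space --- the paper states this fact at the outset of its proof.
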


Extremal contractions are particularly amenable to the lattice theory involved, and we for example have the following generalization of a beautiful result of Arbarello and Sacc\`a \cite[Theorem 1.1 part i)]{AS}:

\begin{theorem}[See Proposition \ref{nakajima}]\label{intro theorem nakajima} Let $\pi:Y\to X$ be an irreducible symplectic resolution where $Y$ is a \Kthreen-type manifold, and assume $\rk\,N_1(Y/X)=1$.  Then for any closed point $x\in X$ the analytic germ $(X,x)$ is isomorphic to that of a Nakajima quiver variety.
\end{theorem}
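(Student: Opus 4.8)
The plan is to transport the question to a moduli space of Bridgeland-stable objects on a K3 surface and there to read off the germ from deformation theory. The starting point is that the analytic germ $(X,x)$ is a \emph{locally trivial invariant}: in a locally trivial family $\scrX\to T$ every point has a neighbourhood isomorphic to the product of a slice in $\scrX_0$ with $T$, so the set of analytic germs realised in a fibre does not change over a connected base. By Corollary \ref{intro theorem mono class} the locally trivial deformation type of $X$ is pinned down by the $\Mon^2(Y)$-orbit of $N=N_1(Y/X)$, and by Proposition \ref{proposition def to bridge} there is a representative of this type which is a wall-crossing contraction; since the germs of locally trivially deformation equivalent varieties coincide, we may assume for the computation that $X$ is the image $M_{\sigma_0}(v)$ of a contraction $M_{\sigma^+}(v)\to X$, where $M_{\sigma^+}(v)$ is a smooth moduli space of $\sigma^+$-stable objects on a K3 surface $S$ for a stability condition adjacent to a wall $W$, and $\sigma_0\in W$. (The hypothesis $b_2(X)>3$ needed for this global reduction is harmless here, the statement being local: the finitely many rigid cases with $b_2(X)=3$ from Remark \ref{remark rigid} can be inspected directly.)

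First I would describe $x$ in these terms: it is the $S$-equivalence class of a $\sigma_0$-polystable object $E=\bigoplus_i E_i^{\oplus m_i}$ on the wall $W$, where the $E_i$ are the pairwise non-isomorphic $\sigma_0$-stable Jordan--H\"older factors, with Mukai vectors $v_i$ and multiplicities $m_i$. Here the hypothesis $\rk N_1(Y/X)=1$ enters through the Bayer--Macr\`i analysis \cite{BM}: a rank-one contracted class confines the destabilising data to the rank-two hyperbolic lattice $\hyplat_W$ of a single wall and forces the stable factors to be spherical. Thus $\Ext^1(E_i,E_i)=0$, while $\Hom(E_i,E_j)=\Ext^2(E_i,E_j)=0$ and $\dim\Ext^1(E_i,E_j)=-\langle v_i,v_j\rangle$ for $i\neq j$, so the $v_i$ form a configuration of spherical roots in $\hyplat_W$ of finite or affine ADE type. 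This is precisely what makes the associated Ext-quiver loop-free and of the shape underlying a Nakajima quiver variety.

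Next I would invoke the local structure theory for moduli of objects. Applying the analytic Luna slice theorem to the local GIT presentation of $M_{\sigma_0}(v)$ around $[E]$, the germ $(X,x)$ becomes isomorphic to the germ at the origin of the affine GIT quotient
\[
\mu^{-1}(0)/\!\!/\,G,\qquad G=\prod_i\GL(m_i,\C),
\]
where the representation space is $\bigoplus_{i,j}\Hom(\C^{m_i},\C^{m_j})\otimes\Ext^1(E_i,E_j)$ and the quadratic map $\mu$ is assembled from the Yoneda products $\Ext^1(E_i,E_j)\otimes\Ext^1(E_j,E_i)\to\Ext^2(E_i,E_i)\iso\C$ furnished by Serre duality on $S$. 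Serre duality identifies $\Ext^1(E_i,E_j)$ with the dual of $\Ext^1(E_j,E_i)$, so the representation space is the cotangent-type double of the Ext-quiver and $\mu$ is its moment map, while the holomorphic symplectic form on $M_{\sigma^+}(v)$ matches the symplectic form of the quotient. This identifies $(X,x)$ with the germ of the Nakajima quiver variety attached to the Ext-quiver $Q$ with vertex set $\{i\}$ and dimension vector $(m_i)$, and generalises the theorem of Arbarello--Sacc\`a \cite{AS} from Gieseker moduli with non-primitive Mukai vector to arbitrary rank-one contractions.

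The crux, and the step I expect to be the main obstacle, is justifying that the Kuranishi equations cutting out the germ are \emph{exactly} the quadratic moment-map equations $\mu=0$, with no higher-order corrections. A priori the Kuranishi map is a power series whose degree-$d$ term records the $d$-fold Yoneda/$A_\infty$ product on $\Ext^\bullet(E,E)$, and one must show these higher products can be gauged away. This is the formality of the dg-algebra $\RHom(E,E)$, which holds on a K3 surface because the Calabi--Yau-$2$ structure equips $\Ext^\bullet(E,E)$ with a cyclic, hence formal, $A_\infty$-structure; concretely, the obstruction space $\Ext^2(E,E)$ pairs with $\Ext^1$ purely through the quadratic Yoneda form. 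Establishing this formality, and then matching the resulting quotient with the standard presentation of a Nakajima quiver variety --- including the bookkeeping of the stability parameter and of any framing encoded by the $v_i$ --- is the technical heart of the proof; by comparison, the deformation-invariance reduction and the lattice-theoretic rank-one input are essentially formal.
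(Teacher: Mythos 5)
Your first reduction---analytic germs are locally trivial invariants, so by Corollary \ref{cor monodromy class} and Proposition \ref{proposition def to bridge} one may replace $X$ by any Bridgeland wall-crossing contraction whose contracted lattice lies in the same monodromy orbit---is exactly the paper's starting point (though your aside about $b_2(X)=3$ is vacuous here: $b_2(Y)=23$ and $\rk N_1(Y/X)=1$ force $b_2(X)=22$). But your endgame contains a factual error and a genuine gap. The error: relative Picard rank one does \emph{not} force the Jordan--H\"older factors to be spherical, nor the Ext-quiver to be loop-free. The Hilbert--Chow contraction $\Hilb^n(S)\to\Sym^n(S)$ has relative Picard rank one, and its polystable objects have point sheaves $\sO_x$ among their stable factors, with $v(\sO_x)^2=0$ and $\Ext^1(\sO_x,\sO_x)\cong\C^2$, i.e.\ two loops at that vertex. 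This alone is not fatal (Nakajima quiver varieties accommodate loops, and indeed the quiver varieties appearing in \cite{AS} have them), but the ``configuration of spherical roots of ADE type'' picture is wrong.

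The decisive gap is the step you yourself flag as the crux: the claim that the Kuranishi equations reduce to the quadratic moment-map equations because the $A_\infty$-structure on $\Ext^\bullet(E,E)$ is ``cyclic, hence formal.'' Cyclicity is not formality: a cyclic minimal $A_\infty$-structure is merely one compatible with the Serre-duality pairing and can perfectly well have nonvanishing higher products. What you actually need is the Kaledin--Lehn formality conjecture for polystable objects on a K3 surface, which at the time of this paper was open in the required generality (it was settled only later by Budur--Zhang); moreover, for Bridgeland rather than Gieseker moduli you would additionally need a Luna-slice/local-GIT presentation of $M_{\sigma_0}(v)$ at a polystable object, itself nontrivial. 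So your argument reduces the theorem to an unproved conjecture. The paper's Proposition \ref{nakajima} is designed to avoid exactly this: it invokes the quiver-variety local model only as a black box in the one case where it was proved by Arbarello--Sacc\`a \cite{AS}, namely Gieseker moduli of pure one-dimensional sheaves with nongeneric polarization, and then shows by pure lattice theory that this case suffices---using the Bayer--Macr\`i constraints that $\hyplat$ contains $v$ and a class $a$ with $0\leq(v,a)\leq (v,v)/2$ and $(a,a)\geq -2$, Nikulin's embedding theorem to produce a K3 surface with $\Pic(S)\cong\hyplat$, and an explicit strictly semistable wall for $v_0=(0,D,\delta)$ whose associated lattice is $\hyplat$ up to monodromy. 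In short: where you propose hard deformation theory, the paper substitutes the monodromy classification plus a lattice-theoretic realization statement, which is what makes the proof unconditional.
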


As another nice application, we are able to rule out the existence of certain contractions on \Kthreen-type varieties:

\begin{theorem}[See Corollary \ref{corollary a2}] Let $\pi:Y\to X$ be an irreducible symplectic resolution where $Y$ is a \Kthreen-type manifold, and assume $\rk\,N_1(Y/X)=1$. Then for a generic closed point $x\in X^\sing$ the analytic germ $(X,x)$ is isomorphic to $(S,0)\times (\C^{2n-2},0)$ where $(S,0)$ is an $A_1$-surface singularity.
\end{theorem}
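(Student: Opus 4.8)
The plan is to reduce, using the locally trivial deformation theory already developed together with the \Kthreen wall-crossing picture, to an explicit model, and then to extract the germ from the local structure theory of symplectic singularities.

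First I would record that the analytic germ $(X,x)$ at a \emph{general} point $x\in X^\sing$ is invariant under locally trivial deformation: the fibres of a locally trivial family are locally analytically isomorphic along the base, so the generic transverse germ is constant. Hence by Proposition \ref{proposition def to bridge} --- applicable here since $\rk N_1(Y/X)=1$ forces $b_2(X)=b_2(Y)-1>3$ --- I may replace $\pi$ by a divisorial wall-crossing contraction of a moduli space $Y$ of Bridgeland-stable objects on a $K3$ surface, without changing the germ in question. The hypothesis $\rk N_1(Y/X)=1$ means the contraction is elementary: writing $N:=N_1(Y/X)$, under $\tilde q: H^2(Y,\Q)\xrightarrow{\cong} H_2(Y,\Q)$ the class of $\Exz(\pi)$ spans the rank-one space $N_\Q$, so $\Exz(\pi)$ is a single prime divisor whose general fibre over $Z:=\pi(\Exz(\pi))=X^\sing$ is one rational curve; thus $\dim Z=2n-2$, i.e.\ $Z$ has codimension two.

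Next I would invoke the structure theory of symplectic singularities. Since $\pi$ is an irreducible symplectic resolution, $X$ is a symplectic variety, so by Kaledin's theorem it is stratified by symplectic leaves along which, at least formally, $X$ is a product of the leaf and a transverse symplectic slice. A general $x\in X^\sing$ lies on the open, codimension-two leaf, whose germ is $(\C^{2n-2},0)$, so $(X,x)\cong(\C^{2n-2},0)\times(T,0)$ with $(T,0)$ a two-dimensional symplectic singularity. Such a $(T,0)$ is a rational double point $\C^2/\Gamma$, $\Gamma\subset\operatorname{SL}_2(\C)$ finite, whose minimal resolution is the transverse slice of $\pi$ and whose exceptional configuration is the trace of the fibre $\pi^{-1}(x)$ on the slice. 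It remains to show this configuration is a single $(-2)$-curve, which identifies $(T,0)$ with the $A_1$-singularity $(S,0)$ and gives $(X,x)\cong(S,0)\times(\C^{2n-2},0)$.

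The crux, and the main obstacle, is precisely this last identification. By itself $\rk N=1$ only bounds the number of monodromy orbits of contracted curve classes and does not exclude monodromy-folded slices --- for example a transverse $A_2$ whose two $(-2)$-curves are exchanged by the monodromy of $Z$ would still produce an irreducible $\Exz(\pi)$ and a rank-one $N$. To rule these out I would use the Bayer--Macr\`i classification of divisorial walls for moduli of objects on a $K3$ surface: a divisorial wall is governed by a spherical class, the associated autoequivalence is a reflection of order two, and the contracted locus is transverse to an $A_1$-singularity in every case. This excludes the folded possibilities and pins down $(T,0)=(S,0)$. A final technical point is to upgrade Kaledin's formal splitting to an isomorphism of analytic germs; this follows from Artin approximation together with the algebraicity of the transverse slice model.
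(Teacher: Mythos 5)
Your overall architecture coincides with the paper's: reduce to a Bridgeland contraction via locally trivial deformation equivalence (Proposition \ref{proposition def to bridge}, legitimate here since $b_2(X)=b_2(Y)-1=22>3$), invoke Kaledin's transversal structure to write the generic germ as $(\C^{2n-2},0)\times(T,0)$ with $(T,0)$ an ADE singularity (Corollary \ref{corollary from kaledin}), observe that $\rk N_1(Y/X)=1$ together with the monodromy action on the dual graph leaves only $A_1$ and a folded $A_2$ as possibilities, and finally eliminate the folded $A_2$ by wall-crossing. You also correctly identify this elimination as the crux. The gap lies in how you discharge it.

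The statement you attribute to \cite{BM} --- that a divisorial wall is governed by a spherical class, with associated reflection of order two, and that ``the contracted locus is transverse to an $A_1$-singularity in every case'' --- is not their classification. Divisorial walls come in three types: Brill--Noether walls, induced by a spherical class $s$ with $(s,v)=0$; Hilbert--Chow walls, induced by an \emph{isotropic} class $w$ with $(w,v)=1$; and Li--Qin walls, induced by an isotropic class $w$ with $(w,v)=2$. Two of the three are not governed by spherical classes at all, so an argument resting on spherical reflections fails on Hilbert--Chow and Li--Qin walls; and ``transverse $A_1$ in every case'' cannot simply be quoted from \cite{BM} --- it is essentially the statement to be proven. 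The paper supplies exactly this (Proposition \ref{proposition generic fiber}) by showing that the generic fiber of $\Exz(\pi)\to X$ is a single $\P^1$, through a case analysis on $\hyplat$: if $\hyplat$ contains no isotropic vectors, one first reduces to a wall that is not totally semistable by composing spherical reflections, and then \cite[Lemma 7.4]{BM} identifies the generic Harder--Narasimhan filtration as $0\to S\to F\to A\to 0$ with $S$ the unique $\sigma_0$-stable spherical object (uniqueness since $(v(S),v)=0$), so the unique contracted curve through a generic point is $\P\Ext^1(A,S)\cong\P^1$; if $\hyplat$ contains isotropic classes, the Hilbert--Chow and Li--Qin cases are handled separately via \cite[Proposition 8.6]{BM} and \cite[Lemma 8.7]{BM}. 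Since an irreducible exceptional divisor folding a transverse $A_2$ would have as generic fiber a connected \emph{pair} of rational curves, the single-$\P^1$ statement is precisely what rules it out. Your proof needs this case analysis inserted at the crux; as written, the step would fail on the two isotropic wall types.
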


The surprising fact is that relative Picard rank one contractions whose generic singularity is transversally an $A_2$-surface singularity \emph{do not occur} on irreducible symplectic manifolds while on other symplectic manifolds they do, see Corollary \ref{corollary a2} and the discussion thereafter.

We expect most of our general results to apply to symplectic varieties not necessarily admitting a symplectic resolution by using a $\Q$-factorial terminalization in place of the resolution but we do not pursue that level of generality here.  We do note however that this would fit very nicely with another result of Namikawa \cite[Main Theorem, p.~97]{Na06} that every flat deformation of a $\Q$-factorial terminal projective symplectic variety is locally trivial.  As for the applications, we only restrict to \Kthreen-type varieties for simplicity, and our theory should yield similar results for the Kummer and O'Grady types.  These directions are the topic of a forthcoming article.

\subsection*{Outline}
In Sections \ref{sec hodge} and \ref{sec symplektisch} we explain some basic facts about symplectic varieties admitting irreducible symplectic resolutions and their Hodge structures. We recall that the Hodge structure on the second cohomology of such a symplectic variety is always pure and prove the degeneration of Hodge-de Rham spectral sequence for singular symplectic varieties in a suitable range. 

The locally trivial deformation theory of symplectic varieties admitting an irreducible symplectic resolution is studied in Section \ref{sec defo}. This is one of the two technical centerpieces of the present paper. All our moduli theory builds on this in an essential way. The main results are the proof of smoothness of the Kuranishi space, the comparison of deformations of a symplectic variety admitting an irreducible symplectic resolution and its resolution, the local Torelli theorem, and the analog of Huybrechts' theorem. 
Even though the deformations of a birational contraction can be characterized locally in the deformation space in terms of Noether-Lefschetz loci, it is---contrary to the surface case---nontrivial to show that a contraction deforms.  We also include some remarks and applications to the question of existence of algebraically coisotropic subvarieties. 

In Section \ref{sec monodromy} we introduce and investigate marked moduli spaces of locally trivial families of symplectic varieties admitting an irreducible symplectic resolution. We study the relation to moduli spaces of the resolution through compatibly marked moduli spaces of irreducible symplectic resolutions as well as various associated monodromy groups. The analysis of how these moduli spaces and their monodromy groups are connected is the second main ingredient of this work, and our analysis makes use of Verbitsky's ergodicity of complex structures (see \cite[Definition 1.12]{V15}) and Amerik-Verbitsky's concept of MBM classes (see \cite[Definition 1.13]{AV}). It becomes clear at this point that it is not sufficient to study the moduli spaces of lattice polarized varieties as this perspective ignores the question of how the contraction deforms on two fronts:  locally in terms of how the singularities smooth (and whether the contraction deforms at all) and globally in terms of how the monodromy group of the contraction differs from the isotropy group of the lattice polarization.

In Section \ref{sec k3type} we give applications to \Kthreen-type varieties. We essentially make use of the description of the birational geometry of Bridgeland moduli spaces of stable objects by Bayer and Macr\`i. In principle, our methods allow to extend any result on singularities that can be proven for contractions of Bridgeland moduli spaces to arbitrary \Kthreen-type varieties. The above-mentioned generalization of Arbarello--Sacc\`a's result is proven here.

\subsection*{Acknowledgments.} We would like to thank J\'anos Koll\'ar for helpful discussions on birational contractions. We benefited from discussions, remarks, emails of Chiara Camere, Daniel Greb, Klaus Hulek, Manfred Lehn, Giovanni Mongardi, Gianluca Pacienza, Jacob Tsimerman, Thomas Peternell, Stefan Kebekus, Stefano Urbinati, and Misha Verbitsky. Both authors are grateful to the referees for a very careful reading and many suggestions that have greatly improved the exposition.

Benjamin Bakker was supported by NSF grants DMS-1702149 and DMS-1848049. Christian Lehn was supported by the DFG through the research grants \mbox{Le 3093/2-1}, Le 3093/2-2, and  Le 3093/3-1. 

\enlargethispage{\baselineskip}

\subsection*{Notation and Conventions} 
The term variety will denote an integral separated scheme of finite type over $\C$ in the algebraic setting or an irreducible and reduced Hausdorff complex space in the complex analytic setting. We will also refer to either (complex) algebraic varieties or (complex) analytic varieties. Mostly, the term variety stands for an analytic variety though.\\
For a topological group $G$ we denote by $G^\circ$ the connected component of the identity.

\section{Hodge theory of rational singularities}\label{sec hodge}
In this section we establish some basic facts about the Hodge structure on low degree cohomology groups of varieties with rational singularities. Recall that the Fujiki class $\scrC$ consists of all those compact complex varieties which are meromorphically dominated by a compact K\"ahler manifold, see \cite[\S 1]{Fuj78}. This is equivalent to saying that there is a resolution of singularities by a compact K\"ahler manifold by Lemma 1.1 of op. cit. We will speak of a variety of Fujiki class for a variety in class $\scrC$. Let us also recall that by \cite[Corollary 1.7]{Fuj78} for each $k\geq 0$ the singular cohomology of degree $k$ of a smooth compact variety of Fujiki class carries a pure Hodge structure of weight $k$.

Let $X$ be a variety carrying a pure Hodge structure on $H^{2k}(X,\Z)$. We will write $H^{k,k}(X,\Z)$ for the preimage of $H^{k,k}(X)$ under the map $H^{2k}(X,\Z)\to H^{2k}(X,\C)$. Recall that for $k=1$ the \emph{transcendental lattice} $H^2(X)_\tr\subset H^2(X,\Z)$ is defined to be the smallest integral Hodge substructure such that $H^2(X,\C)_\tr:=H^2(X)_\tr\tensor \C$ contains $H^{2,0}(X)$.

While the following lemma is well-known, we include it for the reader's convenience.

\begin{lemma}\label{lemma peternell}
Let $\pi:Y\to X$ be a proper birational morphism where $X$ is a complex variety with rational singularities. Then, $\pi^*:H^1(X,\Z) \to H^1(Y,\Z)$ is an isomorphism and the sequence
\begin{equation}\label{eq leray}
 0 \to H^2(X,\Z) \to[\pi^*] H^2(Y,\Z) \to H^0(X,R^2\pi_*\Z)
\end{equation}
is exact. In particular, if $X$ is compact and $Y$ is a compact manifold of Fujiki class, then $H^i(X,\Z)$ carries a pure Hodge structure for $i=1, 2$. Moreover, the restriction $\pi^*: H^2(X)_\tr \to H^2(Y)_\tr$ is an isomorphism, and $\pi^*H^{1,1}(X,\Z)$ is the subspace of $H^{1,1}(Y,\Z)$ of all classes that vanish on the classes of $\pi$-exceptional curves.
\end{lemma}
\begin{proof}
The pushforward of the exponential sequence gives an exact sequence
$$\sO_X \to[\exp] \sO_X^\times  \to R^{1}\pi_*\Z_Y \to R^{1}\pi_*\sO_Y \to \ldots$$
where the map $\exp$ is again surjective. Thus, rationality of the singularities implies that $R^1\pi_*\Z=0$. The Leray spectral sequence tells us now that $\pi^*$ is an isomorphism on $H^1$ and that the sequence \eqref{eq leray} is exact. 
As $H^i(Y,\Z)$ carries a pure Hodge structure, also $H^i(X,\Z)$ carries a pure Hodge structure for $i=1,2$ by strictness of morphisms of Hodge structures.

Let us address the last two statements. By \cite[(12.1.3) Theorem]{KM}, the map $\pi_*:H_2(Y,\C)\to H_2(X,\C)$ is surjective and its kernel is generated by algebraic cycles, since $R^2\pi_*\sO_Y=0$ by the rationality of the singularities. Taking duals, we see that the restriction $\pi^*: H^2(X)_\tr \to H^2(Y)_\tr$ is an isomorphism. Moreover, from loc. cit. we also infer that $\ker \pi_*\subset H_2(Y,\C)$ consists of push forwards of homology classes contained in the fibers of $\pi$. A closer look at the proof reveals that it is in fact generated by algebraic cycles in the fibers. This is deduced from \cite[(12.1.1) Lemma]{KM} where it is shown using $R^2\pi_*\sO_Y=0$ that all degree $2$ homology in the fibers of $\pi$ is algebraic. Consequently, the kernel is generated by contracted curves and by taking duals again we may conclude the proof.
\end{proof}

For projective $X$ the purity of the Hodge structure on the second cohomology was observed by Namikawa, see e.g. the proof of \cite[Proposition 1]{Na06}. See also \cite[Theorem 7]{schwald} and \cite[Lemma 3.1]{PR}.

We will need to know to what extent the de Rham complex on either a resolution or the smooth part of a singular variety can be used to compute the singular cohomology and the Hodge decomposition on $X$. Note that there is an obvious morphism of complexes $\C_X\to \Omega_X^\bullet$, and for  any resolution $\pi:Y\to X$ there is a natural pullback map of complexes $\Omega_X^\bullet\to \pi_*\Omega_Y^\bullet$.  Likewise for an open embedding $j:U\to X$ whose complement has codimension $\geq 2$, there is a pullback $\Omega_X^\bullet\to j_*\Omega_U^\bullet$. The compositions $\C_X\to \Omega_X^\bullet \to j_*\Omega_U^\bullet$ and $\C_X\to \Omega_X^\bullet \to \pi_*\Omega_Y^\bullet$ induce canonical maps $H^k(X,\C)\to \HH^k(X,j_*\Omega_U^\bullet)$ and $H^k(X,\C)\to \HH^k(X,\pi_*\Omega_Y^\bullet)$ which we analyze in the following lemma.

\begin{lemma}\label{lemma hodge}
Let $X$ be a normal compact variety of Fujiki class with rational singularities and let us denote by $j:U=X^\reg\to X$ the inclusion of the regular part. Then the following hold:
\begin{enumerate}
\item \label{lemma hodge item zero} For each $p \in \N_0$ the sheaf $j_*\Omega_U^p$ is a coherent and reflexive $\sO_X$-module.
\item\label{lemma hodge item two} For all $k\leq 2$ the canonical map $H^k(X,\C)\to \HH^k(X,j_*\Omega_U^\bullet)$ is an isomorphism and the Hodge-de Rham spectral sequence
\begin{equation}\label{eq spec seq projective}
 E^{p,q}_1=H^q(j_*\Omega_U^p) \Rightarrow \HH^{p+q}(X,j_*\Omega_U^\bullet)
\end{equation}
degenerates on $E_1$ in the region where $p+q \leq 2$.
	\item\label{lemma hodge item one} Let $\pi:Y \to X$ be a resolution of singularities. Then for all $k\leq 2$ the canonical map $H^k(X,\C)\to \HH^k(X,\pi_*\Omega_Y^\bullet)$ is an isomorphism and the Hodge-de Rham spectral sequence
\begin{equation}\label{eq spec seq}
 E^{p,q}_1=H^q(\pi_*\Omega_Y^p) \Rightarrow \HH^{p+q}(X,\pi_*\Omega_Y^\bullet)
\end{equation}
degenerates on $E_1$ in the region where $p+q \leq 2$.
\end{enumerate}
\end{lemma}
\begin{proof}
Observe that $\Omega_U^p$ is locally free for each $p$, in particular torsion free. As $X$ is normal, we infer that $j_*\Omega_U^p$ is coherent by Serre's result \cite[Th\'eor\`eme 1]{Se66}. Here we use that $\Omega_X$ is a coherent extension of $\Omega_U$.

By \cite[Corollary 1.8]{KS18}, we have $j_*\Omega_U^\bullet=\pi_*\Omega_\tX^\bullet$ so that in particular $j_*\Omega_U^p$ is reflexive for every $p$ and \eqref{lemma hodge item zero} follows. This equality together with \eqref{lemma hodge item one} also implies \eqref{lemma hodge item two}, so it suffices to prove the third statement. For every morphism $C_\bullet\to D_\bullet$ of complexes on $Y$ the diagram
\begin{equation}\label{eq hodge pre diag}
\xymatrix{
\pi_*C_\bullet  \ar[r]\ar[d]  & \pi_*D_\bullet \ar[d]\\
R\pi_*C_\bullet  \ar[r] & R\pi_*D_\bullet \\
}
\end{equation}
of complexes on $X$ commutes. For $C_\bullet = \C \to \Omega_Y^\bullet=D_\bullet$ we obtain the commuting diagram
\begin{equation}\label{eq hodge diag}
\xymatrix{
H^k(X,\C) \ar[r]\ar[d]^{\pi^*} & \HH^k(\pi_*\Omega_Y^\bullet)\ar[d]^{\psi}\\
H^k(\tX,\C) \ar[r] & \HH^k(\Omega_\tX^\bullet)\\
}
\end{equation}
The lower horizontal map is an isomorphism by Grothendieck's theorem and $\pi^*$ is injective for $k\leq 2$ by Lemma \ref{lemma peternell}. 
 We will show that for $k\leq 2$ the map $\psi$ is injective and the codimension of its image is the same as that of $\pi^*$. 

For the injectivity we compare the spectral sequences on $X$ and $Y$. Let us show first that the $E_1$-level of the spectral sequence \eqref{eq spec seq} embeds into the $E_1$-level of the spectral sequence of the complex $\Omega_\tX^\bullet$, which degenerates on $E_1$ by Hodge theory. Note that $Y$ is also of Fujiki class and therefore it carries a Hodge structure on its cohomology and the Hodge-de Rham spectral sequence degenerates on $E_1$.

Now, we have $H^k(X,\sO_X)\isom H^k(\tX,\sO_\tX)$ by rationality of singularities for all $k \in \N_0$ and obviously $H^0(X,\pi_*\Omega^k_Y)\isom H^0(\tX,\Omega^k_\tX)$.
The inclusion $H^1(X,\pi_*\Omega_\tX) \subset H^1(\tX,\Omega_\tX)$ is deduced from the Leray spectral sequence.

Thus, degeneration of the spectral sequence and injectivity of $\psi$ will follow once we show that $H^1(X,\pi_*\Omega_\tX) \subset H^1(\tX,\Omega_\tX)$ has codimension equal to $m:=\dim N_1(Y/X)=\dim H^{1,1}(\tX)-\dim H^{1,1}(X)$, where $N_1(Y/X)$ is the kernel of the surjection $N_1(Y) \to N_1(X)$, see \cite[(12.1.5)]{KM} and note that log terminal singularities are rational.
But $\coker\psi^{1,1}$ is the image of $H^1(\Omega_\tX) \to H^0(R^1\pi_*\Omega_\tX)$. So let $C_1, \ldots, C_m$ be curves in $\tX$ contracted to a point under $\pi$ such that their classes form a basis of $N_1(\tX/X)$ and let $L_1, \ldots, L_m$ be line bundles on $\tX$ such that their Chern classes $\xi_i:=\chern_1(L_i)\in H^1(\Omega_\tX)$ define linearly independent functionals on $N_1(\tX/X)$. Choose irreducible components $F_1, \ldots, F_m$ of fibers of $\pi$ such that $C_i \subset F_i$ for all $i$. Take resolutions of singularities  $\nu_i:\tilde F_i \to F_i$ and curves $\tilde C_i \subset \tilde F_i$ such that ${\nu_i}_*\tilde C_i = C_i$ for all $i$. If we denote $F:=\coprod{\tilde F_i}$ and by $\nu : F\to \tX$ the composition of the resolutions with the inclusion, then by the projection formula $\nu^*\xi_i.\tilde C_j= \xi_i.C_j$ so that the $\nu^*\xi_i$ are still linearly independent. This implies that the $\xi_i$ are mapped to an $m$-dimensional subspace of $H^1(F,\Omega_F)$ under the composition $H^1(\Omega_\tX) \to H^0(R^1\pi_*\Omega_\tX)\to H^1(F,\Omega_F)$. In particular, $\rk \left(H^1(\Omega_\tX) \to H^0(R^1\pi_*\Omega_\tX)\right) \geq m$, which completes the proof of the lemma. 
\end{proof}

From the proof of the preceding lemma we deduce
\begin{corollary}\label{corollary hodge}
Let $X$ be a normal compact variety of Fujiki class with rational singularities, let $\pi:Y \to X$ be a resolution of singularities, and denote by $j:U=X^\reg\to X$ the inclusion of the regular part. Then for $k,p+q \leq 2$ we have 
\begin{enumerate}
 \item\label{corollary hodge item one} $H^{p,q}(X)\isom H^q(X,\pi_*\Omega_Y^p)\isom H^q(X,j_*\Omega_U^p)$, 
 \item $H^k(X,\C)\isom \HH^k(X,\pi_*\Omega_Y^\bullet)\isom \HH^k(X,j_*\Omega_U^\bullet)$, and 
 \item $F^pH^k(X,\C)\isom \HH^k(X,\pi_*\Omega_Y^{\geq p})\isom \HH^k(X,j_*\Omega_U^{\geq p})$.
 \end{enumerate}
\end{corollary}
\begin{proof}
The isomorphism $H^{p,q}(X)\isom H^q(X,\pi_*\Omega_Y^p)$ was shown in the proof of Lemma \ref{lemma hodge}. There, we saw that $H^{p,q}(X)\subset H^q(X,\pi_*\Omega_Y^p)$ and both spaces were shown to have the same codimension in $H^p(Y,\Omega_Y^q)$. Therefore, they are equal. The statement about the pushforward of $\Omega_U^p$ is again deduced from \cite[Corollary 1.8]{KS18}. The second statement is contained in Lemma \ref{lemma hodge} and the third statement is a consequence of it together with the first statement of the corollary.
\end{proof}
Again, the second statement was shown by Schwald in \cite[Theorem 7]{schwald} for projective varieties. Now we turn to the relative situation. The following result follows from the absolute case by homological algebra.
\begin{lemma}\label{lemma hodge relativ}
Let $X_0$ be a normal compact variety of Fujiki class with rational singularities, let $f: X \to S$ be a flat deformation of $X_0$ over a local Artinian base scheme $S$ of finite type over $\C$, let $j_0:U_0 \into X_0$ be the inclusion of the regular locus and let $U\to S$ be the induced deformation of $U_0$. Let us denote by $j:U\into X$ the inclusion and suppose that $j_*\Omega^\bullet_{U/S}$ is flat over $S$. 
Then the following hold:
\begin{enumerate}
	\item\label{lemma hodge relativ item one} For each $p \in \N_0$ and for every closed subscheme $S' \into S$ we have $\left(j_*\Omega_{U/S}^p\right) \tensor_{\sO_S} \sO_{S'} = j'_*\Omega_{U'/S'}^p$ where $j':U' \into X'$ is the base change of $j$ to $S'$. Moreover, the sheaf $j_*\Omega_{U/S}^p$ is a coherent $\sO_X$-module.
	\item\label{lemma hodge relativ item two} For all $k\leq 2$ the canonical map $H^k(X,f^{-1}\sO_S)\to \HH^k(X,j_*\Omega_{U/S}^\bullet)$ is an isomorphism and the Hodge-de Rham spectral sequence
\begin{equation}\label{eq spec seq relativ}
 E^{p,q}_1=H^q(j_*\Omega_{U/S}^p) \Rightarrow \HH^{p+q}(X,j_*\Omega_{U/S}^\bullet)
\end{equation}
degenerates on $E_1$ in the region where $p+q \leq 2$. 
	\item\label{lemma hodge relativ item three} The $\sO_S$-modules $H^q(X,j_*\Omega_{U/S}^p)$ are free for $p+q \leq 2$ and compatible with arbitrary base change.
\end{enumerate} 
\end{lemma}
\begin{proof}
We put $R:=\Gamma(S,\sO_S)$ and denote by $\gothm \subset R$ its maximal ideal. The case $R=\C$ was treated in Lemma \ref{lemma hodge}.

For the proof of \eqref{lemma hodge relativ item one}, we first note that $\left(j_*\Omega_{U/S}^p\right) \tensor_{\sO_S} \sO_{S'} = \left(j_*\Omega_{U/S}^p\right) \tensor_{j_*\sO_U} j'_*\sO_{U'} = j'_*\Omega_{U'/S'}^p$ by normality of $X_0$. It remains to prove coherence. Let us assume $\gothm\neq 0$ and denote by $n \in \N$ the unique natural number such that $\gothm^n\neq 0$ but $\gothm^{n+1}=0$. We will argue by induction on $n$. From now on, let $S' \into S$ be the closed subscheme defined by $\gothm^n$ and let $j':U' \into X' \to S'$ be the base change of $j: U \into X$ to $S'$. By normality of $X_0$ we have that $\left(j_*\Omega_{U/S}^p\right) \tensor \sO_{S'} = j'_*\Omega_{U'/S'}^p$ and thus by flatness for each $p$ there is an exact sequence
\begin{equation}\label{eq omega induction}
0\to {j_0}_*\Omega_{U_0}^p \tensor \gothm^n \to j_*\Omega_{U/S}^p \to j'_*\Omega_{U'/S'}^p \to 0
\end{equation}
Note that $\gothm^n$ is a $R/\gothm = \C$-vector space. By the inductive hypothesis, the left and the right term in the sequence are coherent, thus the same holds for the middle term.

The strategy for \eqref{lemma hodge relativ item two} and \eqref{lemma hodge relativ item three} is basically the same as in the proof of \cite[Th\'eor\`eme 5.5]{Deligne}. The differentials on all modules $E_1^{p,q}$ with $p+q=k$ will be zero if and only if $\sum_{p+q=k}\lg_R E_1^{p,q}=\lg_R \HH^k(X,j_*\Omega_{U/S}^\bullet)$ where $\lg_R$ denotes the length as an $R$-module. Note that both sides are finite.

Flatness and coherence of $j_*\Omega^\bullet_{U/S}$ entail that there is a bounded below complex $L^\bullet$ of free $R$-modules of finite rank such that there is an isomorphism $H^q(X,j_*\Omega_{U/S}^p \tensor f^*M) \isom H^q(L\tensor_R M)$ which is functorial in the $R$-module $M$, see \cite[Ch~3, Th\'eor\`eme 4.1]{BS}. This is where in the algebraic category, Deligne uses \cite[Th\'eor\`eme (6.10.5)]{EGAIII2} instead. By \cite[(3.5.1)]{Deligne}, we obtain that $\lg_R H^q(X,j_*\Omega_{U/S}^p) \leq \lg R \cdot \lg_R H^q(X_0,{j_0}_*\Omega_{U_0}^p)$ and equality holds if and only if $H^q(X,j_*\Omega_{U/S}^p)$ is $R$-free. 
 
For $k\leq 2$ we have 
\begin{equation}\label{eq deligne freeness inequality}
\begin{aligned}
\lg_R \HH^k(X,j_*\Omega_{U/S}^\bullet) &\leq  \sum_{p+q=k}\lg_R H^q(X,j_*\Omega_{U/S}^p)\\
&\leq  \lg R \cdot \sum_{p+q=k}\dim_\C H^q(X_0,{j_0}_*\Omega_{U_0}^p) \\
&= \lg R \cdot H^k(X_0,\C) 
\end{aligned}
\end{equation}
where the first inequality is the existence of the spectral sequence, the second one was explained just before and the equality is the degeneracy of the spectral sequence for $X_0$, see Lemma \ref{lemma hodge}.

As in the proof of \eqref{lemma hodge relativ item one} we may assume $\gothm\neq 0$, and we will show by induction on the minimal $n \in \N$ satisfying $\gothm^{n+1}=0$ that $\HH^k(X,j_*\Omega_{U/S}^\bullet) = H^k(X,f^{-1}\sO_S) =  H^k(X,\ul{R}_X)$ for $k \leq 2$ where $\ul{R}_X$ denotes the constant sheaf $R$. The complexes in \eqref{eq omega induction} have natural augmentations from $\ul\C_X \tensor \gothm^n$, $\ul R_X$, and $\ul R'_X$ where $R'= R/\gothm^n = \Gamma(S',\sO_{S'})$. 
Applying cohomology we obtain the following diagram with exact rows. The upper row is exact by the universal coefficient theorem. 
\[
\xymatrix@C=5mm{
0\ar[r] &  H^k\left(X_0,\C\right) \tensor \gothm^n \ar[r] \ar[d] &  H^k\left(X_0,\ul R_X \right) \ar[r]  \ar[d] &  H^k\left(X_0,\ul{R}'_{X'}\right) \ar[r]  \ar[d] &  0\\
\ldots \ar[r] &  \HH^k\left(X_0,{j_0}_*\Omega_{U_0}^\bullet\right) \tensor \gothm^n \ar[r]^(0.57){\beta_k} &  \HH^k(X,j_*\Omega_{U/S}^\bullet) \ar[r]^(0.48){\alpha_k} &  \HH^k(X',j'_*\Omega_{U'/S'}^\bullet) \ar[r] &  \ldots\\
}
\]
where the outer vertical morphisms are isomorphisms by induction. Thus, $\alpha_k$ is surjective for all $k \leq 2$. As the bottom row is part of the long exact sequence in cohomology, surjectivity of $\alpha_{k-1}$ implies injectivity of $\beta_k$. 
Thus, the middle vertical morphism is an isomorphism by the $5$-lemma. Therefore, the inequality in \eqref{eq deligne freeness inequality} is an equality which entails all the freeness statements we wanted to prove.

The base change property follows from the local freeness by \cite[Ch~3, Corollaire 3.10]{BS} (this is the analog of \cite[(7.8.5)]{EGAIII2} in the analytic case).
\end{proof}

\section{Symplectic varieties, symplectic resolutions, and periods}\label{sec symplektisch}

In this section we collect some definitions and summarize mostly known results that will be used in subsequent sections.  Some of the proven results are probably well-known to experts but we include the proofs for the precise statements that we need. In Section \ref{sect sympl def} we introduce our main object of study, singular symplectic varieties and their irreducible symplectic resolutions.  We also provide some basic results about these varieties that will be essential in Section \ref{sec defo}.  In Section \ref{section hyp hodge} we discuss properties of their periods, and the associated period domains.  In Sections \ref{sect mono orbit} and \ref{sect cone stuff} we summarize work of Verbitsky on the classification of monodromy orbit closures in the period domain and the space of K\"ahler cones.

\subsection{Symplectic varieties and symplectic resolutions}\label{sect sympl def}

Recall from \cite[Proposition 4]{B} that an \emph{irreducible symplectic manifold} is a simply connected compact K\"ahler manifold $\tX$ such that $H^0(\tX,\Omega_\tX^2)=\C \sigma$ for a holomorphic symplectic $2$-form $\sigma$. By \cite[Th\'eor\`eme 5]{B}, there is a nondegenerate quadratic form $q_\tX:H^2(\tX,\Z)\to \Z$ of signature $(3,b_2(Y)-3)$, the \emph{Beauville--Bogomolov--Fujiki form}. Thus, the associated bilinear form gives an injection $\tilde q_Y:H^2(\tX,\Z)\into H_2(\tX,\Z)$ which becomes an isomorphism over $\Q$.

We will need the notion of a K\"ahler complex space, due to Grauert, see  \cite[\S 3, 3., p. 346]{Gra}. Let $X$ be a reduced complex space. A \emph{K\"ahler form} on $X$ is  given by an open covering $X=\bigcup_{i\in I} U_i$ and smooth strictly plurisubharmonic functions $\vphi_i:U_i\to \R$ such that on $U_{ij}:=U_i\cap U_j$ the function $\vphi_i\vert_{U_{ij}} - \vphi_j\vert_{U_{ij}}$ is pluriharmonic, i.e., locally the real part of a holomorphic function. Here, a \emph{smooth function} on $X$ is by definition just a function $f:X\to \R$ such that under a local holomorphic embedding of $X$ into an open set $U\subset \C^n$, there is a smooth (i.e., $C^\infty$) function on $U$ (in the usual sense) that restricts to $f$ on $X$. A \emph{K\"ahler space} is then a reduced complex space admitting a K\"ahler form. Note that the form is not part of the structure.

We do not need many further details on K\"ahler spaces.  Indeed, we will only use that compact K\"ahler spaces can be resolved by compact K\"ahler manifolds (see e.g. \cite[II, 1.3.1 Proposition]{Var89}) and therefore come equipped with mixed Hodge structures on their cohomology groups.

\begin{definition}\label{definition isr}
We will use the term \emph{symplectic variety} in the same sense as Beauville\footnote{To be precise, Beauville studied algebraic varieties whereas we work in the complex analytic category. Apart from that the definition is however literally the same.} to mean a normal complex K\"ahler variety $X$ together with a holomorphic symplectic $2$-form $\sigma$ on its regular part that extends holomorphically to one (and hence to any) resolution of singularities, see \cite[Definition 1.1]{Be}. An \emph{(irreducible) symplectic resolution} is a resolution $\pi:Y\to X$ of a compact K\"ahler symplectic variety $X$ by an (irreducible) symplectic manifold $Y$.
\end{definition}

\begin{remark}\hspace{.1in}
\begin{enumerate}
	\item Different classes of singular symplectic varieties have been studied recently. The class of irreducible symplectic varieties defined by Greb--Kebekus--Peternell \cite[Definition 8.16]{GKP} for example is the relevant class of symplectic varieties showing up in the singular version of the Beauville-Bogomolov decomposition. The decomposition theorem has been established by H\"oring--Peternell \cite{HP19} building on work of Druel \cite{Dru} and Greb--Guenancia--Kebekus \cite{GGK}.
\item Of course a symplectic resolution of a symplectic variety is not guaranteed to exist.  Let $\pi:Y\to X$ be a resolution of singularities of a symplectic variety $X$. Then $\pi$ is a symplectic resolution if and only if it is a crepant resolution. It is well-known that not every variety has a crepant resolution, see e.g. Example \hyperref[example symplectic item noncrepant]{\ref{example symplectic}~\eqref{example symplectic item noncrepant}} below. To ask for the existence of a symplectic resolution is thus a strong assumption. We expect however, that the techniques developed here are applicable in greater generality, namely by replacing the crepant resolution by a $\Q$-factorial terminalization (for some recent progress see \cite{BL18}, especially Theorems 1.1 and 1.5). Note that every algebraic variety with klt singularities has a $\Q$-factorial terminalization by \cite[Corollary 1.4.3]{BCHM}.
\end{enumerate}

\end{remark}

\begin{Ex}\label{example symplectic}\hspace{1in}
\begin{enumerate}
\item Recall that there are the following known deformation types of irreducible symplectic manifolds, all arising from moduli spaces of sheaves on \Kthree or abelian surfaces:  for any $n>1$, the Hilbert scheme $S^{[n]}$ of length $n$ subschemes of a \Kthree surface $S$ as described in \cite[Section 6.]{B} is an irreducible symplectic manifold, which by Lemme 2 of op. cit. satisfies $b_2=23$. For any $n>1$, the generalized Kummer variety $K_n(A)$, i.e., the Hilbert scheme of zero-sum length $n+1$ subschemes of an abelian surface $A$ as described in \cite[Section 7.]{B} is irreducible symplectic and satisfies $b_2=7$ by Proposition 8 of op. cit. There are two more sporadic examples: in dimension $10$, the symplectic resolution constructed in \cite{OG1} of a singular moduli space of sheaves on a \Kthree surface $S$ with a certain Mukai vector is an irreducible symplectic manifold. It has $b_2=24$ by \cite[Theorem 1.1]{Rap08}. In dimension $6$, the symplectic resolution of a singular moduli space of trivial-determinant sheaves on an abelian surface $A$ is an irreducible symplectic manifold with $b_2=8$, see \cite[(1.4)~Theorem]{OG2}. Note that the Betti numbers are different in all these examples; as of now these are all the known examples of irreducible symplectic manifolds up to deformation.
\item Evidently, if $\pi:\tX \to X$ is a proper birational morphism from an (irreducible) symplectic manifold $\tX$ to a normal K\"ahler variety $X$, then $X$ is a symplectic variety and $\pi$ is an (irreducible) symplectic resolution.  Thus, symplectic resolutions can instead be viewed as birational contractions of symplectic manifolds.
\vskip0em\indent As all known deformation types of irreducible symplectic manifolds arise from moduli spaces of sheaves, a rich source of symplectic resolutions is given by wall-crossing contractions associated to non-generic polarizations.  We systematically investigate such examples in Section \ref{sec k3type}. 
\item Divisorial contractions yield $\Q$-factorial $X$, and these were studied in \cite{LP}, see especially Section 2 there.  The more general context dealt with in the current paper allows for small contractions, the easiest example of which is the contraction of a Lagrangian projective space to a point.  See \cite[Examples 8-10]{BB} for explicit examples.
\item Let $Y \subset  \P^5$ be a singular cubic fourfold with ADE singularities not containing a plane. Then it has been shown in \cite[Theorem 3.3]{Leh18} that the variety $M_1(Y)$ of lines on $Y$ is a symplectic variety which is birational to the second punctual Hilbert scheme of an associated K3 surface. It follows that $M_1(Y)$ admits a crepant resolution by an irreducible symplectic manifold, see \cite[Corollary 5.6]{Leh18}. A similar statement is deduced for the target space $Z(Y)$ of the MRC-fibration of the Hilbert scheme compactification of the space of twisted cubics on $Y$, see Theorem 1.1, Corollary 5.5,  and Corollary 6.2 of op. cit.
\item\label{example symplectic item noncrepant} As mentioned above, not every symplectic variety admits a symplectic resolution. For example, every $\Q$-factorial, terminal symplectic variety is either smooth or does not have any symplectic resolution. Indeed, $\Q$-factoriality implies that the exceptional locus of a resolution is a divisor and terminality then implies that every irreducible component of this divisor has strictly positive discrepancy, in particular, the resolution cannot be crepant. The $n$-th symmetric product $X^{(n)}$ with $n\geq 2$ of a smooth symplectic variety $X$ of dimension $\geq 4$ provides a concrete example, since group quotients are always $\Q$-factorial and terminality for symplectic varieties is equivalent to the singular locus having codimension $\geq 4$, see \cite[Corollary 1]{Nam01c}.
\end{enumerate}
\end{Ex}

Given a symplectic resolution $\pi:\tX \to X$, from \cite[(12.1.3) Theorem]{KM} it follows that we have a short exact sequence 
\begin{equation}\label{eq relative homology sequence}
0\to H_2(Y/X,\Q) \to H_2(Y,\Q) \to H_2(X,\Q)\to 0. 
\end{equation}

Here, $H_2(Y/X,\Q)$ denotes the subspace of $H_2(Y,\Q)$ generated by all pushforwards of cohomology classes in the fibers of $\pi$. Let us recall that $N_1(Y)$ is defined as the abelian group of algebraic $1$-cycles modulo numerical equivalence and that $N_1(Y/X) \subset N_1(Y)$ is defined as the kernel of the push forward map $\pi_*:N_1(Y) \to N_1(X)$. Then by loc. cit. $H_2(Y/X,\Q)$ is generated by algebraic cycles and therefore coincides with $N_1(Y/X)_\Q:=N_1(Y/X)\otimes\Q$. As in the introduction, we denote $\tilde q_Y:H^2(Y,\Q) \to H_2(Y,\Q)$ the isomorphism induced by the quadratic form $q_Y$. 

The following lemma is an elementary consequence of the results of Section \ref{sec hodge} and is fundamental in what follows, especially for the study of locally trivial deformations, see Section \ref{sec defo}. Among other things it will be used in the description of deformations of $Y$ that induce locally trivial deformations of $X$, see Proposition \ref{prop defo}.

\begin{lemma}\label{lemma symplektisch}
Let $\pi:\tX \to X$ be an irreducible symplectic resolution. Then $\pi^*:H^2(X,\Z) \to H^2(\tX,\Z)$ is injective and this injection is an equality on the transcendental part $H^2(X)_\tr=H^2(\tX)_\tr$. The restriction of $q_Y$ to $H^2(X,\Z)$ is nondegenerate. 
The $q_Y$-orthogonal complement to $H^2(X,\Q)$ in $H^2(\tX,\Q)$ is $N:=\left(\tilde q_Y\right)^{-1}(N_1(\tX/X)_\Q)$. Moreover, $N$ is negative definite with respect to $q_Y$.
\end{lemma}
\begin{proof}
Injectivity and equality of the transcendental parts follow from Lemma \ref{lemma peternell}. To see that $N$ is $q_Y$-orthogonal to $H^2(X,\Q)$ we only have to unravel the definition of $N$: let $C$ be a curve in $\tX$ that is contracted to a point under $\pi$. Then $D_C:=\tilde q_Y^{-1}([C])\in H^2(X,\Q)$ is the unique class such that $C.\pi^*\alpha=q_Y(D_C,\pi^*\alpha)$ for all $\alpha\in H^2(X,\Q)$. But $C.\pi^*\alpha=0$ by the projection formula, so indeed $N \perp H^2(X,\Q)$.

From \eqref{eq relative homology sequence} and the equality $H_2(Y/X,\Q)=N_1(Y/X)_\Q$, we infer that $\dim N + b_2(X) = b_2(Y)$. To conclude the proof it therefore suffices to show that $N$ is negative definite. For this we may extend the coefficients to $\R$. The restriction of $q_Y$ to $H^{1,1}(\tX)$ is nondegenerate with one positive direction and $N \subset H^{1,1}(\tX)$. We fix a K\"ahler class $h$ on $X$. Then $q_Y(h)>0$ as $h$ is big and nef on $Y$, so $h^\perp \subset H^{1,1}(\tX)$ is negative definite.  But $N \subset h^\perp$ which is what we needed to show.
\end{proof}

The following is an essential ingredient in the proof of Theorem \ref{theorem deflt is smooth} about the unobstructedness of locally trivial deformations. 
\begin{proposition}\label{proposition tangent sheaf}
Let $\pi:Y\to X$ be an irreducible symplectic resolution and let $j:U\to X$ be the inclusion of the regular locus. Then, we have 
\[
T_X=j_*T_U= j_*\Omega_U=\pi_* \Omega_{Y}.
\]
\end{proposition}
\begin{proof}
Indeed, as $X$ is integral, $T_X$ is reflexive so that the first equality follows from normality of $X$. The second equality comes from the symplectic form on the regular part, and the third is again \cite[Corollary 1.8]{KS18}.
\end{proof}

\subsection{Hodge structures of hyperk\"ahler type}\label{section hyp hodge}
Let $\Lambda$ be a free finitely-generated $\Z$-module.  Let $q$ be a nondegenerate symmetric form on $\Lambda$ of signature $(3,\rk(\Lambda)-3)$.  

\begin{definition}\label{definition hodge structure of hk type}
A \emph{Hodge structure of hyperk\"ahler type} on $(\Lambda,q)$ is a weight two integral Hodge structure $H$ on $\Lambda $ with $h^{2,0}=h^{0,2}=1$ such that $q:H\otimes H\to \Z(-2)$ is a morphism of Hodge structures and for which the restriction of $q$ to $(H^{2,0}\oplus H^{0,2})_\R$ is positive definite.
\end{definition}

Hodge structures of hyperk\"ahler type $H$ are parametrized by the period domain 
	\begin{equation}\label{eq hk period domain}
	\Omega_{\Lambda}:=\{[\sigma] \in \P(\Lambda_\C)\mid q(\sigma,\sigma)=0, q(\sigma,\bar\sigma)>0\}.
	\end{equation}
We denote by $H_p$ the Hodge structure on $\Lambda$ corresponding to $p\in \Omega_\Lambda$.  The positive real plane $(H^{2,0}\oplus H^{0,2})_\R$ is canonically oriented by $\Re(\sigma)\wedge\Im(\sigma)$ where $\sigma\in H^{2,0}$ is a generator. Conversely, a positive oriented plane $P$ in $\Lambda_\R$ determines an element $\sigma \in \Omega_\Lambda$ via $\sigma=v+iw$ where $(v,w)$ is an oriented orthonormal basis of $P$. Thus, the period domain $\Omega_\Lambda$ can alternatively be thought of as the space of oriented positive-definite planes in $\Lambda_\R$.

\subsection{Monodromy orbit closures}\label{sect mono orbit}
We will also need Verbitsky's classification \cite[Theorem 2.5]{Ve17} of orbit closures of hyperk\"ahler periods under arithmetic lattices in the orthogonal group.  We make the following definition:  
 \begin{definition}
Let $H$ be a pure weight two integral Hodge structure with underlying $\Z$-module $\Lambda$.  The \emph{rational rank} of $H$ is $\rrk(H):=\rk((H^{2,0}\oplus H^{0,2})\cap \Lambda)$.  Note that $0\leq \rrk(H)\leq 2 \cdot h^{2,0}$.  For $p\in \Omega_\Lambda$ we will also denote by $\rrk(p):=\rrk(H_p)$ the rational rank of the Hodge structure $H_p$ on $\Lambda$ corresponding to $p$.
 \end{definition}

Fixing an oriented positive-definite plane $P_0\in\Omega_\Lambda$ as a basepoint, we obtain an isomorphism
\begin{equation} \label{eq period homogeneous}
\Omega_\Lambda\cong \SO(\Lambda_\R)^\circ/\SO(P_0)\times\SO(P_0^\perp)^\circ
\end{equation}
where the superscript stands for the identity component. Let us assume from now on that $\rk(\Lambda)> 4$. This hypothesis ensures that $\SO(P_0^\perp)^\circ$ is generated by unipotents.  Now the only closed connected Lie subgroups of $\SO(3,n)^\circ$ containing $\SO(1,n)^\circ$ are $\SO(1,n)^\circ$, $\SO(2,n)^\circ$, and $\SO(3,n)^\circ$ (see \cite[Theorem 2.1]{Ve17}).  Thus, the smallest closed connected Lie subgroup of $\SO(\Lambda_\R)^\circ$ which contains $\SO(P_0^\perp)$ and is defined over $\Q$ is $\SO((\ell_0^\perp)_\R)^\circ$, where $\ell_0:=P_0\cap \Lambda$.  Note we have $\rrk(P_0)=\rk(\ell_0)$.  Given an arithmetic lattice $\Gamma \subset \O(\Lambda)$, we set $\Gamma^\circ:=\SO(\Lambda_\R)^\circ\cap \Gamma$.  By an application of Ratner's theorem (see \cite[Theorem 4.2]{V15} for the precise statement, or \cite[\S1.1.15(2)]{Mor} and \cite[\S1.1.19]{Mor} for further background), the orbit closure of $\Gamma^\circ \cdot\id \in\Gamma^\circ\backslash\SO(\Lambda_\R)^\circ$ under $\SO(P_0^\perp)^\circ$ is the (closed) orbit under $\SO((\ell_0^\perp)_\R)^\circ$.  It follows then that the orbit closure under $\SO(P_0)\times\SO(P_0^\perp)^\circ$ is either the orbit itself, $(\Gamma^\circ\cdot \id)\SO((\ell_0^\perp)^\circ_\R) \SO(P_0)$, or all of $\Gamma^\circ\backslash\SO(\Lambda_\R)^\circ$ when $\rk(\ell_0)=2,1,$ or $0$, respectively.  Note that for the middle case, $(\Gamma^\circ\cdot \id)\SO((\ell_0^\perp)^\circ_\R) \SO(P_0)$ is closed (as $(\Gamma^\circ\cdot \id)\SO((\ell_0^\perp)^\circ_\R)$ is closed and $\SO(P_0)$ compact) and contained in the orbit closure, hence equal to it.

\begin{proposition}[Theorem 2.5 in \cite{Ve17}]\label{proposition verbitsky}There are three possibilities for the orbit closure of $P_0\in\Omega_\Lambda$ under an arithmetic lattice $\Gamma\subset\O(\Lambda)$ depending on $r=\rrk(P_0)$:
 \begin{enumerate}
 \item[$(r=2)$] the orbit is closed;
 \item[$(r=1)$] the orbit closure is the union of $G_{\gamma\ell_0}$ for $\gamma\in\Gamma$, where $G_\ell$ is the subset of positive planes $P$ containing $\ell$;
 \item[$(r=0)$] the orbit is dense.
 \end{enumerate}  
 \end{proposition}
 \begin{remark}
   The $r=1$ case was omitted in \cite[Theorem 4.8]{V15} and corrected recently in \cite{Ve17}.  Note that in the $r=1$ case, $G_\ell\subset\Omega_\Lambda$ is a real-analytic submanifold of real codimension $\rk\, \Lambda-2$ in $\Omega_\Lambda$.
 \end{remark}
 
 \subsection{Decompositions of the positive cone}\label{sect cone stuff}
By $\Lambda$ we still denote a lattice of signature $(3,\rk(\Lambda)-3)$. Let $\Gamma\subset\O(\Lambda)$ be an arithmetic lattice.
\begin{definition}\label{def mbm coll}  An \emph{MBM collection} of $\Lambda$ (for $\Gamma$) is a $\Gamma$-invariant set $M\subset \Lambda-\{0\}$ of classes of bounded negative square.
\end{definition}

\begin{setup}\label{setup cones}
In the following, we refer to \cite[\S4]{markmantor} for details.  We denote by $C(\Lambda)\subset \Lambda_\R$ the cone of positive vectors. It has the homotopy type of a $2$-sphere, in particular, it is connected. For a hyperk\"ahler Hodge structure $H$ on $\Lambda$, we define $C^{1,1}(H)\subset H_\R^{1,1}$ to be the cone of positive vectors in the $(1,1)$-part. This cone has two components, and a choice of component is equivalent to a choice of generator of $H^2(C(\Lambda),\Z)$. We fix such a generator of $H^2(C(\Lambda),\Z)$ and denote by  $C^{1,1}(H)^+$ the corresponding component of $C^{1,1}(H)$. Let $M$ be an $MBM$ collection of $\Lambda$ for $\Gamma$. For each $p\in\Omega_\Lambda$, the collection $M$ induces a wall-and-chamber decomposition of $C^{1,1}(H_p)^+$ whose open chambers are the connected components of 
\begin{equation}\label{eq kaehler type chambers}
C^{1,1}(H_p)^+-\bigcup_{\alpha\in M_p} \alpha^\perp
\end{equation}
where $M_p=M\cap H^{1,1}_p$.
\end{setup}

\begin{remark}\label{remark MBM gen}Note that the collection of hyperplanes $\bigcup_{\alpha\in M_p} \alpha^\perp$ is locally finite in $C^{1,1}(H_p)^+$ since the squares of the classes in $M$ are bounded.  Indeed, for any $\omega\in C^{1,1}(H_p)^+$ the space $\omega^\perp\cap (H^{1,1}_p)_\R$ is negative definite, and the fixed radius closed balls in $\omega^\perp\cap (H^{1,1}_p)_\R$ form a proper family over $C^{1,1}(H_p)^+$, hence only finitely many walls $\alpha^\perp$ can intersect any given compact subset of $C^{1,1}(H_p)^+$.\end{remark}

\begin{definition}\label{definition kaehler type chambers}
The open chambers of $C^{1,1}(H_p)^+$ defined by \eqref{eq kaehler type chambers} are called \emph{K\"ahler-type chambers}. The hyperplanes $\alpha^\perp$ for $\alpha \in M_p$ are called \emph{K\"ahler-type walls}.
\end{definition}

Let $\scrC^{1,1}(\Omega_\Lambda)^+$ be the universal type (1,1) positive cone---that is, the subspace of the product $\Omega_\Lambda\times C(\Lambda)$ consisting of pairs $(p,\omega)$ for which $\omega\in C^{1,1}(H_p)^+$.  Define
 \[
\Omega^\mathrm{class}_\Lambda:=\scrC^{1,1}(\Omega_\Lambda)^+-\bigcup_{\alpha\in M}\alpha^\perp\times\alpha^\perp.
\]  
As in Remark \ref{remark MBM gen}, for any $(p,\omega)\in \scrC^{1,1}(\Omega_\Lambda)^+$ the fixed radius closed balls in $\omega^\perp\cap (H^{1,1}_p)_\R$ form a proper family over $\scrC^{1,1}(\Omega_\Lambda)^+$, so the collection $\bigcup_{\alpha\in M}\alpha^\perp\times\alpha^\perp$ of universal hyperplanes is locally finite in $\scrC^{1,1}(\Omega_\Lambda)^+$ and in particular $\Omega_\Lambda^\mathrm{class}$ is open in $\scrC^{1,1}(\Omega_\Lambda)^+$.

 Evidently, the fiber of $\Omega_\Lambda^\mathrm{class}$ over $p\in\Omega_\Lambda$ is the union of the open chambers of the wall-and-chamber decomposition of $C^{1,1}(H_p)^+$ defined above. We define the space $\Omega^\mathrm{cone}_\Lambda$ to consist of pairs $(p,C)$ for $p\in\Omega_\Lambda$ and $C$ an open chamber of the decomposition of $C^{1,1}(H_p)^+$, topologized as a quotient of $\Omega^\mathrm{class}_\Lambda$.  The forgetful map $\Omega_\Lambda^\mathrm{cone}\to\Omega_\Lambda$ is continuous since $\Omega^\mathrm{class}_\Lambda\to\Omega_\Lambda$ is.  Moreover, for any $p\in \Omega_\Lambda$ and any $\omega$ in an open chamber of $C^{1,1}(H_p)^+$, a sufficiently small ball $B$ around $(p,\omega)$ in $\scrC^{1,1}(\Omega_\Lambda)^+$ is contained in the open subset $\Omega^\mathrm{class}_\Lambda$.  Such a $B$ therefore defines a section of $\Omega_\Lambda^\mathrm{cone}\to\Omega_\Lambda$ over its image in $\Omega_\Lambda$, and we have proven the following statement.

\begin{proposition}\label{proposition local torelli cone space version}
The forgetful map $\Omega_\Lambda^\mathrm{cone}\to\Omega_\Lambda$ is a local isomorphism.\qed
\end{proposition}

Note that both $\Omega^\mathrm{class}_\Lambda$ and $\Omega^\mathrm{cone}_\Lambda$ depend on the choice of $MBM$ collection $M$, and that $\Gamma$ acts on both by continuous maps.  We have the following formulation of result of Verbitsky:
\begin{theorem}[Theorem 3.1 of \cite{Ve17}]\label{theorem lift density} Assume $\rk(\Lambda)>4$ and let $M$ be an $MBM$ collection of $\Lambda$ for $\Gamma$.  Then the forgetful map $\pi:\Omega_{\Lambda}^\mathrm{cone}\to \Omega_\Lambda$ commutes with closures.
\end{theorem}
\begin{corollary}\label{corollary lift density} Assume $\rk(\Lambda)>4$. For any $x=(p,C)\in\Omega_\Lambda^\mathrm{cone}$ of non-maximal Picard rank, we have $\overline{\Gamma x}=\pi^{-1}\left(\overline{\Gamma p}\right)$.
\end{corollary}
\begin{proof}
In view of Theorem \ref{theorem lift density}, it suffices to show the inclusion $\supseteq$. Let $y\in \overline{\Gamma x}$, and suppose $y'\in \pi^{-1}(\pi(y))$, in other words, $y'$ is inseparable from $y$.  Let $U'$ and $U$ be open neighborhoods of $y'$ and $y$, respectively, which we may assume have the same image in $\Omega_\Lambda$. Then $U'$ and $U$ must meet at every point in $U$ with Picard rank zero, and therefore meet at a point in $\overline{\Gamma x}$. Hence every open neighborhood of $y'$ meets $\ol{\Gamma x}$ which proves the claim.
\end{proof}

We will finally need to understand how the cone decompositions defined above restrict to sublattices.  
\begin{definition}\label{def induced mbm}Suppose $M'$ is an $MBM$ collection of $\Lambda' $ for $\Gamma'$, and that $\Lambda\subset\Lambda'$ is a primitive sublattice for which $N:=\Lambda^\perp$ is negative definite and rationally generated by classes in $M'$.  Let $\Gamma\subset\Gamma'$ be the stabilizer of $N$.  We define the \emph{induced $MBM$ collection} of $\Lambda$ (for $\Gamma$) to be the set $M\subset\Lambda-\{0\}$ of primitive vectors of negative square which are up to scaling the orthogonal projections of classes in $M'-N$. 
\end{definition}

Evidently $M$ is invariant under the subgroup $\Gamma$. Note that the order of the scaling is bounded by the size of the discriminant group of $\Lambda$, and it follows that the classes of $M$ are indeed of bounded square. Thus, $M$ is again an MBM collection in the sense of Definition \ref{def mbm coll}. Note that in Definition \ref{def induced mbm} it is necessary to impose negativity of the square as the following example shows. 
\begin{example}
Let $\Lambda'$ be a lattice containing a sublattice spanned by two $(-2)$-classes $c_1, c_2$ with $c_1.c_2=2n$. We let $M'$ denote the $\Gamma$-orbit of $\{c_1,c_2\}$. If $N=\Z c_2$ and $\Lambda=N^\perp$, then $c_1+nc_2$ has square $2(n^2-1)$ and is the orthogonal projection of $c_1$. Note that this situation is realized geometrically by K3 surface with two smooth rational curves intersecting in $2n$ points.
\end{example}

\begin{setup}\label{setup mbm sublattice}
 Let us fix an MBM collection $M'\subset \Lambda'$, a primitive sublattice $\Lambda \subset \Lambda'$ such that $N:=\Lambda^\perp$ is negative definite and rationally generated by classes in $M'$, and let us denote by $M \subset \Lambda$ the induced MBM collection. We define $\Omega_{\Lambda'}^\mathrm{class}, \Omega_{\Lambda'}^\mathrm{cone}$ (respectively $\Omega_{\Lambda}^\mathrm{class}, \Omega_{\Lambda}^\mathrm{cone}$) as before using the classes $M'$ (respectively $M$).  
\end{setup}

\begin{remark}
Given a point $p\in \Omega_\Lambda \subset \Omega_{\Lambda'}$, we denote the corresponding Hodge structures on $\Lambda$ and $\Lambda'$ by $H_p$ and $H'_p$. Given Setup \ref{setup mbm sublattice}, we explicitly describe the relation between the K\"ahler-type chambers in $C^{1,1}(H_p)^+$ and in $C^{1,1}(H'_p)^+$, see Definition \ref{definition kaehler type chambers}.  The classes in $M_p$ are the projections of classes in $M'_p$ since $N\subset (H'_p)^{1,1}_\R$.  Thus, the walls of $C^{1,1}(H_p)^+$ are the intersections of the walls of $C^{1,1}(H'_p)^+$ with $\Lambda$.  Moreover, as $N$ is spanned by classes in $M'$, no open chamber $C$ of $C^{1,1}(H'_p)^+$ intersects $\Lambda_\R$.  It follows that each \emph{closed} chamber of $C^{1,1}(H_p)^+$ is of the form $\bar C\cap \Lambda_\R$ for an open chamber $C$ of $C^{1,1}(H_p')^+$ for which $\bar C\cap \Lambda_\R$ has nonempty interior (i.e. for which $\Lambda_\R$ is a face of $C$).
\end{remark}

\begin{definition}\label{definition omega lambda lamba prime}
Let $\Omega_{\Lambda',\Lambda}^\mathrm{class}\subset \Omega_{\Lambda'}^\mathrm{class}$ be the subspace of points $(p,\omega)$ for which $p\in\Omega_\Lambda$ and the chamber of $C^{1,1}(H_p)^+$ containing $\omega$ has $\Lambda_\R$ as a face.   Let $\Omega_{\Lambda',\Lambda}^\mathrm{cone}\subset \Omega_{\Lambda'}^\mathrm{cone}$ be the subspace of points $(p,C)$ for which $p\in\Omega_\Lambda$ and $C$ has $\Lambda_\R$ as a face. 
\end{definition}

\begin{definition}\label{definition resolution chambers}
Let us fix $p\in\Omega_\Lambda\subset\Omega_{\Lambda'}$. In Setup \ref{setup mbm sublattice}, the \emph{resolution chambers} of $C^{1,1}(H'_p)^+$ (with respect to $N$) are the finitely many chambers cut out by the finitely many hyperplanes $\alpha'^\perp$ for $\alpha'\in M'\cap N$, i.e.  the connected components of
\begin{equation}\label{eq resolution chambers}
C^{1,1}(H_p')^+-\bigcup_{\alpha\in M'\cap N} \alpha^\perp.
\end{equation}
Note that any such resolution chamber $\tau$ uniquely determines a resolution chamber in $C^{1,1}(H_t')^+$ for all $t\in \Omega_\Lambda$ which we will also denote by $\tau$. This is because these chambers are cut out by the same inequalities $q(\omega,\alpha')>0$ or $q(\omega,\alpha')<0$ for $\alpha' \in M'\cap N$ independently of $t \in \Omega_\Lambda$. We define $\Omega^\mathrm{class}_{\Lambda',\Lambda}(\tau)\subset \Omega_{\Lambda',\Lambda}^\mathrm{class}$ to be the subspace of $(p,\omega)$ with $\omega \in \tau$. We let $\Omega^\mathrm{cone}_{\Lambda',\Lambda}(\tau)\subset \Omega_{\Lambda',\Lambda}^\mathrm{cone}$ be the image of $\Omega_{\Lambda',\Lambda}^\mathrm{class}(\tau)$.
\end{definition}

Note that all resolution chambers have $\Lambda_\R$ as a face. For a very general $p\in\Omega_\Lambda\subset\Omega_{\Lambda'}$ it follows from comparison of \eqref{eq kaehler type chambers} and \eqref{eq resolution chambers} that the resolution chambers are exactly the K\"ahler-type chambers.

\begin{proposition}\label{prop face map}The connected components of $\Omega_{\Lambda',\Lambda}^\mathrm{cone}$ are the spaces $\Omega^\mathrm{cone}_{\Lambda',\Lambda}(\tau)$ for all possible resolution chambers $\tau$.  Moreover, the ``face" map
\[\Omega_{\Lambda',\Lambda}^\mathrm{cone}\to\Omega_\Lambda^\mathrm{cone}:(p,C)\mapsto (p,\mathrm{int}(\bar C\cap \Lambda_\R))\]
is an isomorphism of each component onto $\Omega_\Lambda^\mathrm{cone}$. 
\end{proposition}
\begin{proof}  As each chamber of $C^{1,1}(H_p')^+$ having $\Lambda_\R$ as a face is contained in a resolution chamber, it follows that $\Omega_{\Lambda',\Lambda}^\mathrm{cone}$ is covered by the $\Omega^\mathrm{cone}_{\Lambda',\Lambda}(\tau)$.  Replacing the strict inequalities defining $\Omega^\mathrm{class}_{\Lambda',\Lambda}(\tau)$ with nonstrict ones defines the same subset, so $\Omega^\mathrm{class}_{\Lambda',\Lambda}(\tau)$ is open and closed in $\Omega^\mathrm{class}_{\Lambda',\Lambda}$, as both are topologized as subsets of $\Omega_{\Lambda'}\times C(\Lambda')$.  As $\Omega^\mathrm{class}_{\Lambda',\Lambda}(\tau)$ is saturated with respect to the quotient $\Omega^\mathrm{class}_{\Lambda',\Lambda}\to\Omega^\mathrm{cone}_{\Lambda',\Lambda}$, it follows that the image $\Omega^\mathrm{cone}_{\Lambda',\Lambda}(\tau)$ is also open and closed in $\Omega_{\Lambda',\Lambda}^\mathrm{cone}$.  

We now claim the face map restricts to a bijection $\Omega^\mathrm{cone}_{\Lambda',\Lambda}(\tau)\to\Omega^\mathrm{cone}_\Lambda$.  Indeed, for any open chamber $C$ of $C^{1,1}(H_p)^+$ for $p\in\Omega_\Lambda$, if $\omega\in C$ then for a sufficiently small open neighborhood $B$ of $\omega$ in $C^{1,1}(H'_p)^+$, the only walls $B$ meets are those of the form $\alpha'^\perp$ for $\alpha'\in N$. As $B$ must intersect $\tau$, there is a unique chamber $C'\subset C^{1,1}(H'_p)^+$ containing $\tau\cap B$, and this must be the unique preimage of $(p,C)$ in $\Omega^\mathrm{cone}_{\Lambda',\Lambda}(\tau)$.

Noting that $\Omega_\Lambda^\mathrm{cone}$ is connected (as the very general fiber over $\Omega_\Lambda$ is a point), for the remainder of the claim it suffices to show $\Omega^\mathrm{cone}_{\Lambda',\Lambda}\to\Omega_\Lambda$ is a local isomorphism.  As in the argument for Proposition \ref{proposition local torelli cone space version}, for any $p\in\Omega_\Lambda$ and any lift $(p,\omega)\in \Omega^\mathrm{cone}_{\Lambda',\Lambda}$, a small ball $B$ around $(p,\omega)$ is contained in $\Omega^\mathrm{cone}_{\Lambda',\Lambda}$ and defines a section of $\Omega^\mathrm{cone}_{\Lambda',\Lambda}\to\Omega_\Lambda$ over its image.
\end{proof}

\section{Deformations}\label{sec defo}
As usual in deformation theory, when we speak about the semi-universal deformation $\scrZ \to \Def(Z)$ of a complex space $Z$, then the complex space $\Def(Z)$ has a distinguished point $0\in\Def(Z)$ such that the fiber of $\scrZ\to \Def(Z)$ over $0$ is $Z$ and we should actually speak about the morphism of space germs $(\scrZ,Z) \to (\Def(Z),0)$. All deformation theoretic statements have to be interpreted as statements about germs.

Let $X$ be a normal compact complex variety with rational singularities and let $\pi:Y \to X$ be a resolution of singularities.
Recall that by \cite[Proposition 11.4]{KM} there is a morphism $p:\Def(Y)\to \Def(X)$ between the Kuranishi spaces of $Y$ and $X$ and also between the semi-universal families $\scrY \to \Def(\tX)$ and $\scrX \to \Def(X)$ fitting in a diagram
\begin{equation}
\label{eq defo diag}
\xymatrix{
\scrY \ar[d]\ar[r]^{P}& \scrX \ar[d]\\
\Def(\tX) \ar[r]^{p} & \Def(X) \\
}
\end{equation}
Recall from \cite[(0.3) Corollary]{FK} that there exists a closed complex subspace $\Def^\lt(X)\subset \Def(X)$ parametrizing locally trivial deformations of $X$. More precisely, the restriction of the semi-universal family to this subspace, which by abuse of notation we denote also by $\scrX \to \Def^\lt(X)$, is a locally trivial deformation of $X$ and is semi-universal for locally trivial deformations of $X$.

Let $\pi:Y \to X$ be an irreducible symplectic resolution with $X$ of dimension $2n$. As $H^0(T_Y)=0$, every semi-universal deformation of $Y$ is universal. We also have $H^0(T_X)=H^0(\pi_*\Omega_Y)=0$ by Proposition \ref{proposition tangent sheaf} so that every semi-universal deformation of $X$ is universal. 
Let us fix universal deformations of $X$ and $Y$ and a diagram as \eqref{eq defo diag}. It is well-known that $\scrY \to \Def(Y)$ is a family of irreducible symplectic manifolds, at least in the sense of germs, i.e., possibly after shrinking the representative of $\Def(Y)$. If $X$ is projective, then also $\scrX\to\Def(X)$ is a family of symplectic varieties admitting irreducible symplectic resolutions by \cite[Theorem 2.2]{Na01}. 
We will see in Proposition \ref{prop namikawa general} that as an application of our results this statement also holds without the projectivity assumption. Our first goal is to address smoothness of the space of locally trivial deformations.
Recall from the introduction and Lemma \ref{lemma symplektisch} that 
we have an orthogonal decomposition
\begin{equation}\label{eq n}
H^2(\tX,\Q)=H^2(X,\Q)\oplus N 
\end{equation}
where $N$ corresponds under the isomorphism $\tilde q_Y:H^2(Y,\Q) \to H_2(Y,\Q)$ to the curves contracted by $\pi$ and put $m:=\dim N$.
\begin{theorem}\label{theorem deflt is smooth}
Let $X$ be a symplectic variety admitting an irreducible symplectic resolution. Then the space $\Def^\lt(X)$ of locally trivial deformations of $X$ is smooth of dimension $h^{1,1}(X) = h^{1,1}(\tX) - m$. 
\end{theorem}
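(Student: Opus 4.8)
The plan is to prove that $\Def^\lt(X)$ is unobstructed by verifying the $T^1$-lifting property of Ran and Kawamata, and then to read off the dimension from the tangent space. Throughout I use that locally trivial deformations are governed by the tangent sheaf $T_X$: first-order locally trivial deformations are classified by $H^1(X,T_X)$ and the obstructions to lifting locally trivial deformations lie in $H^2(X,T_X)$ (see \cite{FK}). By Proposition \ref{proposition gkkp for symplectic} we have $T_X = j_*\Omega_U = \pi_*\Omega_Y$, so by Corollary \ref{corollary hodge} the tangent space is
$$ H^1(X,T_X) \isom H^1(X,\pi_*\Omega^1_Y) \isom H^{1,1}(X), $$
which has dimension $h^{1,1}(X) = h^{1,1}(Y) - m$ by the orthogonal decomposition \eqref{eq n} (intersected with the $(1,1)$-parts). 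Thus, once smoothness is established, the dimension statement is immediate, since a smooth deformation space has dimension equal to that of its tangent space.

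For smoothness I would apply the $T^1$-lifting criterion to the locally trivial deformation functor $\Def^\lt_X$. Set $A_n = \C[t]/(t^{n+1})$ and let $X_n \to \Spec A_n$ be a locally trivial deformation restricting to $X_{n-1} \to \Spec A_{n-1}$. The space of locally trivial liftings over $A_n \otimes_\C \C[\epsilon]/(\epsilon^2)$ is $H^1(X_n, T_{X_n/A_n})$, where $T_{X_n/A_n}$ denotes the relative tangent sheaf, and the criterion demands that the restriction map $H^1(X_n, T_{X_n/A_n}) \to H^1(X_{n-1}, T_{X_{n-1}/A_{n-1}})$ be surjective for every $n$. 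The crucial point is that local triviality propagates the identification of Proposition \ref{proposition gkkp for symplectic} to the family: since $X_n$ is locally analytically a product $X_0 \times \Spec A_n$, the relative reflexive-differential argument gives $T_{X_n/A_n} = j_*\Omega^1_{U_n/A_n}$, and moreover $j_*\Omega^\bullet_{U_n/A_n}$ is flat over $A_n$ (locally it is the trivial deformation of $j_*\Omega^\bullet_{U_0}$). This is exactly the hypothesis under which Lemma \ref{lemma hodge relativ} applies.

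Lemma \ref{lemma hodge relativ} then yields that the $A_n$-module $H^1(X_n, j_*\Omega^1_{U_n/A_n})$ is free of rank $h^{1,1}(X)$ and compatible with arbitrary base change, in particular along $A_n \onto A_{n-1}$. Freeness together with base-change compatibility forces the restriction map $H^1(X_n, T_{X_n/A_n}) \to H^1(X_{n-1}, T_{X_{n-1}/A_{n-1}})$ to be surjective, which is precisely the $T^1$-lifting property; hence $\Def^\lt(X)$ is smooth. Combined with the tangent space computation above, $\Def^\lt(X)$ is smooth of dimension $h^{1,1}(X) = h^{1,1}(Y) - m$.

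I expect the main obstacle to be the relative statement underpinning the whole argument: verifying that for a locally trivial deformation the relative tangent sheaf is identified with $j_*\Omega^1_{U_n/A_n}$ and that the full relative reflexive de Rham complex is $A_n$-flat, so that Lemma \ref{lemma hodge relativ} genuinely applies. One must check that the reflexivity and codimension arguments of Proposition \ref{proposition gkkp for symplectic} — which rest on Kaledin's transversal-ADE description of the singularities (Corollary \ref{corollary from kaledin}) — are preserved in families, and this is where local triviality is essential. It is also what guarantees that the liftings produced by $T^1$-lifting stay locally trivial rather than merely flat, so that the criterion is applied to the correct functor $\Def^\lt_X$ rather than to $\Def(X)$.
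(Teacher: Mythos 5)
Your overall strategy coincides with the paper's: $T^1$-lifting, with the required freeness and base-change statement supplied by Lemma \ref{lemma hodge relativ} after identifying the relative tangent sheaf with the pushforward of relative differentials; the tangent space computation and the dimension count via \eqref{eq n} are also the same, and your flatness claim for $j_*\Omega^\bullet_{U_n/A_n}$ is correct (flatness is local, and locally the family is a product). However, there is a genuine gap at exactly the step you flag as crucial, and your proposed justification of it would fail. The identification $T_{X_0}=j_*\Omega^1_{U_0}$ of Proposition \ref{proposition gkkp for symplectic} is not a canonical isomorphism: it is contraction with the symplectic form $\sigma_0$. It therefore does not ``propagate'' through a locally trivial family. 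On a trivializing open set $V\times\Spec A_n$ you can write down the isomorphism given by contraction with the pullback of $\sigma_0\vert_V$, but two trivializations differ by an automorphism of $V\times\Spec A_n$ over $A_n$ which need not preserve this pulled-back $2$-form, so these local isomorphisms do not glue. (The part of your claim that does glue is $T_{X_n/A_n}=j_*T_{U_n/A_n}$, because that is a canonical map whose being an isomorphism can be checked locally; the problem sits entirely in identifying $T_{U_n/A_n}$ with $\Omega^1_{U_n/A_n}$.) Your closing diagnosis --- that what must be checked is preservation of the reflexivity/codimension arguments of Kaledin in families --- therefore misidentifies the real issue.

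What is actually needed, and what the paper does, is a \emph{global} relative symplectic form: one first extends $\sigma_0$ to a section $\sigma\in H^0(\sU,j_*\Omega^2_{\sX/S})$. This extension is not formal; its existence is precisely the surjectivity of $H^0(X_n,j_*\Omega^2_{U_n/A_n})\to H^0(X_0,j_*\Omega^2_{U_0})$, which follows from the freeness and base-change statement of Lemma \ref{lemma hodge relativ} in the case $(p,q)=(2,0)$ --- the same lemma you invoke, but you only use its $(1,1)$ part. Over the Artinian base, nondegeneracy of $\sigma$ is automatic from nondegeneracy of $\sigma_0$ by Nakayama, and contraction with $\sigma$ then gives the global isomorphism $T_{\sX/S}\isom j_*\Omega^1_{\sU/S}$. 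With this step inserted, your argument closes up and becomes the paper's proof: local triviality gives flatness, so Lemma \ref{lemma hodge relativ} applies; its $(2,0)$ part extends the symplectic form; its $(1,1)$ part then gives freeness and base-change compatibility of $H^1(T_{\sX/S})$; the $T^1$-lifting criterion yields smoothness; and $\dim H^1(T_X)=h^{1,1}(X)=h^{1,1}(\tX)-m$ gives the dimension.
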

\begin{proof}
Smoothness is shown using the $T^1$-lifting principle of Kawamata-Ran \cite{Ran,Ka1,Ka2}, in particular Theorem 1 of \cite{Ka1}. We refer to \cite[\S 14]{GHJ} or \cite[VI.3.6]{mydiss} for more detailed introductions. The tangent space to $\Def^\lt(X)$ at the origin is $H^1(T_X)$ which thanks to Proposition \ref{proposition tangent sheaf} can be identified with $H^1(j_*\Omega_U)$ where $j:U=X^\reg \to X$ is the inclusion. For the $T^1$-lifting property one has to show that for every infinitesimal locally trivial deformation $\sX \to S$ of $X$ over an Artinian base scheme $S$ the space $H^1(T_{\sX/S})$ is locally $\sO_S$-free and compatible with arbitrary base change. We denote again by $j:\sU \to \sX$ the inclusion of the smooth locus of $\sX \to S$.  Take an extension $\sigma \in H^0(\sU,j_* \Omega_{\sX/S}^2)$ of the symplectic form on $U\subset X$. It remains nondegenerate and hence yields an isomorphism $T_{\sX/S}\to j_*\Omega_{\sX/S}$, consequently also $H^1(T_{\sX/S})\isom H^1(\Omega_{\sX/S})$ which is free by Lemma \ref{lemma hodge relativ}. Thus, it satisfies the $T^1$-lifting property and thus the space $\Def^\lt(X)$ is smooth.
It follows from Corollary \ref{corollary hodge} that $\dim H^1(T_X) = h^{1,1}(X)$ which shows the dimension statement and completes the proof.
\end{proof}
It is convenient to introduce the following terminology:
\begin{definition}\label{definition deformable}
An irreducible symplectic resolution $\pi:Y\to X$ is \emph{deformable} if $\Def^\lt(X)$ is contained in the image of the map $p:\Def(Y)\to\Def(X)$ considered in \eqref{eq defo diag}.
\end{definition}

An easy consequence of Namikawa's work, Proposition \ref{proposition deformation remains isv} below says that any irreducible symplectic resolution of a \emph{projective} symplectic variety is deformable. We will in fact see in Proposition \ref{proposition nonproj deform} that all irreducible symplectic resolutions are deformable.
\begin{proposition}\label{proposition deformation remains isv}
Let $\pi:Y \to X$ be an irreducible symplectic resolution and consider the corresponding diagram \eqref{eq defo diag}. Then for every $t\in \Def(Y)$, the morphism $P_t:\scrY_t \to \scrX_{p(t)}$ is an irreducible symplectic resolution. In particular, if $\pi$ is deformable, any small locally trivial deformation of $X$ is again a compact K\"ahler symplectic variety admitting an irreducible symplectic resolution. Moreover, any irreducible symplectic resolution of a projective symplectic variety $X$ is deformable.
\end{proposition}
\begin{proof}
The variety $X$ has rational singularities, hence $\scrX \to \Def(X)$ is a family of K\"ahler varieties by \cite[Proposition 5]{Na01a}. As $\scrY \to \Def(X)$ is a family of irreducible symplectic manifolds, $\scrY \to\scrX$ is fiberwise an irreducible symplectic resolution over the image of $p$. The second statement follows now directly from the definition of deformability.

Suppose now that $X$ is projective. Then by \cite[Theorem (2.2)]{Na01}, the spaces $\Def(Y)$ and $\Def(X)$ are smooth of the same dimension and the map $p$ from diagram \eqref{eq defo diag} is finite. In particular, $p$ is surjective and $\Def^\lt(X)$ is contained in the image.
\end{proof}
\begin{corollary}\label{corollary projective new}
 Let $\pi:Y \to X$ be a deformable irreducible symplectic resolution and $\scrY\to \scrX$ the base-change of the top map of \eqref{eq defo diag} to $p^{-1}(\Def^\lt(X))$.  Then points $t\in p^{-1}(\Def^\lt(X))$ for which $\scrY_t$ and $\scrX_t$ are both projective are dense in every positive dimensional subvariety of $\deflt$.
\end{corollary}
\begin{proof}
By \cite[Proposition 26.6]{GHJ} the claim is true for $\scrY_t$.  Since $\scrX_t$ has rational singularities, see e.g. \cite[Theorem 3.3.3]{Kirschner}, and is K\"ahler, it is projective whenever $\scrY_t$ is by Namikawa's result \cite[Corollary 1.7]{Na02}.
\end{proof}
Next, we describe $p^{-1}(\Def^\lt(X)) \subset \Def(\tX)$.
For $N\subset H^2(Y,\Q)$ as in \eqref{eq n} let us denote by $\Def(\tX,N)\subset \Def(\tX)$ the subspace of those deformations of $\tX$ where all line bundles on $Y$ with first Chern class in $N$ deform along. This is also the subspace where $N$ remains of type $(1,1)$. It is a smooth submanifold of $\Def(\tX)$ of codimension $m=\dim N$ by \cite[1.14]{Huy99}.
\begin{proposition}\label{prop defo}
Let $\pi: \tX\to X$ be a deformable irreducible symplectic resolution. Let $\scrY \to \Def(\tX,N)$ and $\scrX \to\Def^\lt(X)$ be the (restrictions of the) universal deformations. Then for the natural morphism $p:\Def(\tX)\to\Def(X)$ we have $p^{-1}(\Def^\lt(X)) = \Def(\tX,N)$, and \eqref{eq defo diag} restricts to a diagram \begin{equation}\label{eq diag lt}
\xymatrix{
\scrY \ar[d]\ar[r]^P& \scrX \ar[d]\\
\Def(\tX,N) \ar[r]^{p} & \Def^\lt(X). \\
}
\end{equation}
Moreover, $p:\Def(\tX,N) \to \Def^\lt(X)$ is an isomorphism.
\end{proposition}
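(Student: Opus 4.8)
The plan is to realize $p$ as a morphism of smooth germs of the \emph{same} dimension $h^{1,1}(X)$ and to verify that it induces an isomorphism on Zariski tangent spaces, so that the inverse function theorem for complex space germs forces it to be an isomorphism.

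First I would compute both tangent spaces together with the differential $dp$ at the origin. By Theorem \ref{theorem deflt is smooth} the germ $\Def^\lt(X)$ is smooth of dimension $h^{1,1}(X)$ with tangent space $H^1(X,T_X)\iso H^1(X,\pi_*\Omega_\tX)=H^{1,1}(X)$, via the symplectic form and Proposition \ref{proposition gkkp for symplectic}/Corollary \ref{corollary hodge}. On the other side, $\Def(\tX,N)$ is smooth of codimension $m=\dim N$ in $\Def(\tX)$ by \cite[1.14]{Huy99}, hence of dimension $h^{1,1}(\tX)-m=h^{1,1}(X)$; under the local Torelli isomorphism $H^1(T_\tX)\iso H^{1,1}(\tX)$ induced by the symplectic form, its tangent space is cut out by the Hodge-locus condition that $N$ stay of type $(1,1)$, namely $N^\perp\cap H^{1,1}(\tX)$, where $\perp$ is taken with respect to $q$. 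By the orthogonal decomposition $H^2(\tX,\C)=H^2(X,\C)\oplus N$ of Lemma \ref{lemma symplektisch} (see \eqref{eq n}), and since $N\subset H^{1,1}(\tX)$, this tangent space is exactly $H^{1,1}(X)$. Thus both tangent spaces are canonically $H^{1,1}(X)$, and $dp$ is identified with the projection $H^{1,1}(\tX)=H^{1,1}(X)\oplus N\to H^{1,1}(X)$ along $N$; restricted to $T_0\Def(\tX,N)=H^{1,1}(X)$ this projection is the identity, so $dp$ is an isomorphism.

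Next I would check that $p$ really maps $\Def(\tX,N)$ into $\Def^\lt(X)$, so that the diagram \eqref{eq diag lt} makes sense. Over $\Def(\tX,N)$ the classes spanning $N$, equivalently the classes of the $\pi$-contracted curves $N_1(\tX/X)$, remain algebraic; a relatively nef class on $\scrY$ contracting precisely $N_1(\tX/X)$ therefore persists over the whole base, and the associated relative contraction $\scrY\to\scrX$ restricts to $\pi$ over the origin and induces, by versality of $\Def(X)$, a classifying map agreeing with $p$ on $\Def(\tX,N)$. The fibers of $\scrX\to\Def(\tX,N)$ all carry the same transversal ADE surface singularity by Corollary \ref{corollary from kaledin}, whose type is governed by the negative definite lattice $N$, which is constant over the base; this constancy of the singularities is what makes $\scrX\to\Def(\tX,N)$ locally trivial, so the classifying map factors through $\Def^\lt(X)$.

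Finally, having a morphism $p:\Def(\tX,N)\to\Def^\lt(X)$ of smooth germs of equal dimension $h^{1,1}(X)$ which is an isomorphism on tangent spaces, I would conclude by the inverse function theorem that $p$ is an isomorphism of germs; the family $\scrX$ produced in the second step is then the universal locally trivial deformation, $\scrY\to\scrX$ specializes to $\pi$, and \eqref{eq diag lt} is established. I expect the main obstacle to be precisely the second step: constructing the relative contraction over $\Def(\tX,N)$ and, above all, proving that the resulting family is \emph{locally trivial} rather than merely flat. The tangent-space computation and the dimension count are essentially formal once Lemma \ref{lemma symplektisch}, Proposition \ref{proposition gkkp for symplectic}, Corollary \ref{corollary deformation remains isv} and \cite[1.14]{Huy99} are in hand, whereas the constancy of the singularity type along the deformation — where the geometry of symplectic contractions genuinely enters — is the delicate point.
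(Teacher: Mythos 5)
Your steps (1) and (3) --- identifying both tangent spaces with $H^{1,1}(X)$ via Lemma \ref{lemma symplektisch}, Proposition \ref{proposition gkkp for symplectic} and \cite[(1.8), (1.14)]{Huy99}, then applying the inverse function theorem --- are exactly the paper's argument for why $p$ is an isomorphism \emph{once the diagram exists}. The gap is your step (2), and it is genuine. To have a map to apply the inverse function theorem to at all, you need $p$ to carry $\Def(\tX,N)$ into $\Def^\lt(X)$, i.e.\ that the contracted family $\scrX\to\Def(\tX,N)$ is a \emph{locally trivial} deformation of $X$, and your justification is that the fibers ``carry the same transversal ADE singularity governed by $N$''. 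This does not suffice: constancy of the singularity type of the fibers (even granting it, which Corollary \ref{corollary from kaledin} alone does not give --- it describes only the generic points of the singular locus, says nothing about the deeper strata, and nothing about how the singular locus itself varies in the family) is much weaker than local triviality, which is a local product structure of $\scrX$ over the base. No equisingularity statement of this kind is available, and the paper is explicit that statements in this direction are open: see Question \ref{question contraction deforms locally trivially} and the detour in the proof of Theorem \ref{theorem pacienza} (``If we knew that also $\scrY \to \scrX$ were a locally trivial deformation\ldots As we cannot prove this so far, \ldots we have to argue differently''). A smaller issue: you construct the relative contraction by relative basepoint-freeness of a nef class, which is delicate in the K\"ahler category; the paper instead obtains $P:\scrY\to\scrX$ and $p:\Def(\tX)\to\Def(X)$ once and for all from \cite[Proposition 11.4]{KM}.

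The paper circumvents precisely this difficulty by running the containment in the opposite direction: it proves $p^{-1}(\Def^\lt(X))=\Def(\tX,N)$. For $t\in p^{-1}(\Def^\lt(X))$, Corollary \ref{corollary deformation remains isv} gives that $P_t:\scrY_t\to\scrX_{p(t)}$ is again an irreducible symplectic resolution; since the second cohomology of a locally trivial family forms a vector bundle (Lemma \ref{lemma hodge relativ} and Proposition \ref{proposition gkkp for symplectic}), one has $h^{1,1}(\scrX_{p(t)})=h^{1,1}(X)$, so by the orthogonal decomposition of Lemma \ref{lemma symplektisch} the curves contracted by $P_t$ span exactly the parallel transport of $N$, which therefore stays of type $(1,1)$. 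This yields $p^{-1}(\Def^\lt(X))\subset\Def(\tX,N)$, and the reverse inclusion comes from dimension reasons: by Corollary \ref{corollary deformation remains isv} the space $\Def^\lt(X)$, which is smooth of dimension $h^{1,1}(X)=\dim\Def(\tX,N)$ by Theorem \ref{theorem deflt is smooth}, lifts into $p^{-1}(\Def^\lt(X))$, and $\Def(\tX,N)$ is smooth, hence irreducible as a germ. In other words, one starts from locally trivial deformations of $X$, resolves them simultaneously, and observes that $N$ stays algebraic --- rather than contracting deformations of $\tX$ and trying to prove local triviality of the result, which is exactly the unavailable step. The local triviality of the contracted family is then a \emph{consequence} of the proposition, not an ingredient of its proof. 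If you replace your step (2) by this argument, your steps (1) and (3) can stand as written.
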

\begin{proof}
By Proposition \ref{proposition deformation remains isv} we know that for each $t\in \Def(Y)$ mapping to $\Def^\lt(X)$ the morphism $P_t:\scrY_t \to \scrX_{p(t)}$ is an irreducible symplectic resolution. By Lemma \ref{lemma hodge relativ} the second cohomology groups of locally trivial deformations of $X$ form a vector bundle on $\Def^\lt(X)$, in particular, $h^{1,1}(\scrX_{p(t)})=h^{1,1}(X)$. Thus, by the decomposition $H^2(\tX,\C)=N\oplus H^2(X,\C)$ from Lemma \ref{lemma symplektisch} we see that the space $N_1(\scrY_t/\scrX_{p(t)})$ of curves contracted by $P_t$ has dimension $m$ for all $t\in p^{-1}(\Def^\lt(X))$. As $N$ is the orthogonal complement of $H^2(X,\C)$, it also  varies in a local system. This shows the sought-for equality.

One shows as in \cite[Proposition 2.3 (ii)]{LP} that  $p$ is an isomorphism, we only sketch this: It suffices to show that the differential $T_{p,0}:T_{\Def(\tX,N),0}\to T_{\Def^\lt(X),0}=H^1(T_X)$ is an isomorphism. 
We know from \cite[(1.8) and (1.14)]{Huy99} that $T_{\Def(\tX,N),0} \subset H^1(T_\tX)$ can be identified with the orthogonal complement to $N \subset H^{1,1}(\tX)$ under the isomorphism $H^1(T_\tX) \isom H^{1,1}(\tX)$ induced by the symplectic form. In other words, $T_{\Def(\tX,N),0} \isom$ $H^{1,1}(X) \subset H^{1,1}(\tX)$. That this is mapped to $H^1(T_X)\isom H^1(j_*\Omega_U)$ under the restriction of $T_{p,0}:H^1(T_\tX)\to \Ext^1(\Omega_X,\sO_X)$ is easily verified.
\end{proof}
\begin{remark}\label{singular BBF}
Recall from \cite[Theorem 4.7]{Fujiki} that for an irreducible symplectic manifold $Y$ of dimension $2n$ there is a deformation invariant constant $c_Y$ such that $q_Y(\alpha)^n=c_Y\cdot\int_Y\alpha^{2n}$ for any $\alpha\in H^2(Y,\Z)$.  Now if $\pi:Y\to X$ is an irreducible symplectic resolution, it follows that the restriction $q_X(\beta):=q_Y(\pi^*\beta)$ only depends on $X$, since all irreducible symplectic resolutions are deformation-equivalent by \cite[Theorem 2.5]{Huy} and $\int_Y(\pi^*\beta)^{2n}=\int_X\beta^{2n}$.  Furthermore, from Lemma \ref{lemma symplektisch} we have that $q_X$ is nondegenerate of signature $(3,b_2(X)-3)$, and by Proposition \ref{prop defo} it is locally trivially deformation-invariant.  We refer to $q_X$ as the Beauville--Bogomolov--Fujiki form of $X$.
\end{remark}

Using Beauville--Bogomolov--Fujiki form and Proposition \ref{prop defo}, we obtain a singular version of the local Torelli theorem for locally trivial deformations as a direct corollary of the local Torelli theorem for a resolution \cite[Th\'eor\`eme 5]{B}.
\begin{proposition}[Local Torelli Theorem]\label{proposition local torelli}
Let $X$ be a symplectic variety admitting a deformable irreducible symplectic resolution, let $q_X$ be its Beauville--Bogomolov--Fujiki form, and let 
\begin{equation}\label{eq local period domain}
\Omega(X):=\{[\sigma]\in \P(H^2(X,\C))\mid q_X(\sigma)=0, q_X(\sigma,\bar\sigma)>0\} \subset \P(H^2(X,\C)) 
\end{equation}
be the period domain for $X$. If $f:\scrX \to \Def^\lt(X)$ denotes the universal locally trivial deformation of $X$ and $X_t:=f^{-1}(t)$, then the period map 
\[
\wp:\Def^\lt(X) \to \Omega(X), \quad t \mapsto H^{2,0}(X_t)
\]
is a local isomorphism.\qed
\end{proposition}
It should be mentioned that Namikawa has proven a local Torelli theorem for certain singular projective symplectic varieties in \cite[Theorem 8]{Na01a} and this has been generalized by Kirschner \cite[Theorem 3.4.12]{Kirschner} to a larger class of varieties.  In particular, Kirschner has proven a local Torelli theorem in the context of symplectic compact K\"ahler spaces. Let us emphasize, however, that neither Namikawa's version nor Kirschner's is what we need as they do not make any statement about local triviality. Also observe that our version of local Torelli---unlike Namikawa's or Kirschner's---does not make any assumption on the codimension of the singular locus of the variety $X$.
\begin{remark}\label{remark identify base}
In view of the local Torelli theorem \ref{proposition local torelli}, we will from now on identify the spaces $\Def(\tX,N)$ and $\Def^\lt(X)$ via the morphism $p$ if we are given a birational contraction. 
\end{remark}

The following result is a singular analog of \cite[Theorem 2.5]{Huy} which we also use in the proof.

\begin{theorem}\label{theorem huybrechts}
Let $\pi:Y \to X$ and $\pi':Y' \to X'$ be deformable irreducible symplectic resolutions. Assume that there is a birational map $\phi: Y \ratl Y'$ such that the induced map $\phi^*: H^2(\tX',\C) \to H^2(\tX,\C)$ sends $H^2(X',\C)$ isomorphically to $H^2(X,\C)$.  Then there is an isomorphism $\vphi:\Def^\lt(X)\to \Def^\lt(X')$ such that for each $t\in \Def^\lt(X)$ we have a birational map $\phi_t: \scrX_t \ratl \scrX'_{\vphi(t)}$. In particular, for general $t \in \Def^\lt(X)$ the map $\phi_{t}$ is an isomorphism, and $X$ and $X'$ are locally trivial deformations of one another.
\end{theorem}
Note that as a birational map between smooth $K$-trivial varieties, $\phi$ has to be an isomorphism in codimension one and therefore $\phi^*: H^2(\tX',\C) \to H^2(\tX,\C)$ has to be an isomorphism. For singular varieties, this last conclusion is stronger than being an isomorphism in codimension one. It is for example violated if $\pi:X\to X'$ is a small contraction of an irreducible symplectic manifold $X$.

\begin{proof}
The birational  map $ \phi:\tX \ratl \tX'$ between irreducible symplectic manifolds induces an isomorphism between $H^2(\tX,\Z)$ and $H^2(\tX',\Z)$ compatible with the Beauville--Bogomolov--Fujiki forms and the local Torelli theorem gives an isomorphism $\Def(\tX)\to \Def(\tX')$. 
Let us consider the orthogonal decompositions $H^2(\tX,\Q)=H^2(X,\Q)\oplus N$ and $H^2(\tX',\Q)=H^2(X',\Q)\oplus N'$ as in \eqref{eq n}.
By hypothesis, this decomposition is respected by $\phi^*$ from which we infer that the isomorphism $\Def(\tX)\to \Def(\tX')$ restricts to an isomorphism $\Def(\tX,N) \to \Def(\tX',N')$ and therefore yields an isomorphism $\vphi:\Def^\lt(X) \to \Def^\lt(X')$ via Proposition~\ref{prop defo}. 
It remains to show the existence of a birational  map $\phi_t: \scrX_t \ratl \scrX'_{\vphi(t)}$ for $t\in \Def^\lt(X)$ which is an isomorphism at the general point. We will identify the spaces $$S:=\Def^\lt(X)\isom\Def^\lt(X')\isom \Def(\tX,N)\isom \Def(\tX',N')$$ and consider the universal families
\[
 \scrY \to \scrX \to S \ot {\scrX'} \ot {\scrY'}
\]
from Proposition \ref{prop defo}. For any point $t\in S$, the fibers $\scrY_t$ and $\scrY'_{t}$ are deformation equivalent (by Huybrechts' theorem, see \cite[Theorem 2.5]{Huy}) and have the same periods, hence they are birational by Verbitsky's global Torelli theorem \cite[Theorem 1.17]{V13}. By countability of components of the Douady space, there is a cycle $\Gamma \subset \scrY \times_S \scrY'$ such that over a Zariski open $U \subset S$ the fiber of $\Gamma$ over $S$ is the graph of a birational map. For each $t\in S$, the cycles $\Gamma_t$ induce Hodge isometries $[\Gamma_t]_*:H^2(\scrY_t,\Q) \to H^2(\scrY'_t,\Q)$ which form a morphism of local systems when $t$ varies. Choosing $t \in S$ very general, we see that these cycles have to send $H^2(\scrX_t,\Q)$ isomorphically to $H^2(\scrX'_t,\Q)$ or equivalently, $N$ to $N'$.

The image of $\Gamma$ in $\scrX\times_S \scrX'$ is a cycle whose fiber for $t\in U$ is the graph of a birational map $\psi_t:\scrX_{t}\ratl \scrX'_{t'}$. As $[\Gamma_t]_*$ sends $H^2(\scrX_t,\Q)$ isomorphically to $H^2(\scrX_t',\Q)$, we see that ${\psi_t}_*$ sends $\Pic(\scrX_t)\tensor \Q$ isomorphically to $\Pic(\scrX'_t)\tensor \Q$. If we choose $t\in U$ such that  $\scrX_t$ and $\scrX_t'$ are projective of Picard number one, then $\psi^*$ of the ample generator of $\Pic(\scrX_t')$ is again an ample line bundle on $\scrX_t$. Therefore, $\psi_t$ must be regular and hence an isomorphism. This completes the proof.
\end{proof}

The following result is a singular analog of \cite[Theorem 4.6]{Huy99} and is interesting in its own right. The same hypotheses as in Theorem \ref{theorem huybrechts} are needed.

\begin{theorem}\label{theorem huybrechts strong}
Let $X$ and $X'$ be projective symplectic varieties with irreducible symplectic resolutions $\pi:Y\to X$ and $\pi':Y'\to X'$.  Let $\phi: Y \ratl Y'$ be a birational map such that the induced map $\phi^*: H^2(\tX',\C) \to H^2(\tX,\C)$ sends $H^2(X',\C)$isomorphically to $H^2(X,\C)$. Then there are one parameter locally trivial deformations $f:\scrX \to \Delta$, $f':\scrX' \to \Delta$ of $X$ and $X'$ such that $\scrX$ and $\scrX'$ are birational over $\Delta$ and such that $\scrX^* = f^{-1}(\Delta^\times) \isom (f')^{-1}(\Delta^\times) = (\scrX')^* $. 
\end{theorem}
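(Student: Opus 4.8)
The plan is to reduce the statement to the fiberwise isomorphisms already produced by Theorem~\ref{theorem huybrechts}, and then to glue them along a generic one-parameter family; the only genuine work lies in the gluing.

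\emph{Step 1: reduction to Theorem~\ref{theorem huybrechts}.} First I would check that the hypotheses here are exactly those giving condition~\eqref{aeq drei}. The induced birational map $\tilde\phi:\tX\ratl\tX'$ of irreducible symplectic manifolds is an isomorphism in codimension one, so $\tilde\phi^*:H^2(\tX',\C)\to H^2(\tX,\C)$ is a Hodge isometry. Since $\phi$ is an isomorphism in codimension one and $\phi_*$ sends $\Q$-line bundles to $\Q$-line bundles, Proposition~\ref{proposition qcartier} (read as in the last paragraph of the proof of Theorem~\ref{theorem huybrechts}) shows that $\tilde\phi^*$ carries $H^2(X',\C)$ into $H^2(X,\C)$, and, being an isometry, isomorphically onto it. Thus condition~\eqref{aeq drei} holds, and Theorem~\ref{theorem huybrechts} furnishes an isomorphism $\vphi:\Def^\lt(X)\to\Def^\lt(X')=:S$ together with, for every $t\in S$, a birational map $\phi_t:\scrX_t\ratl\scrX'_{\vphi(t)}$ which is an isomorphism in codimension one, and an honest isomorphism for general $t$.

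\emph{Step 2: passing to one parameter.} By Theorem~\ref{theorem deflt is smooth} the space $S$ is smooth of dimension $h^{1,1}(X)\geq 1$, and the locus $Z\subsetneq S$ where $\phi_t$ fails to be an isomorphism is a proper closed analytic subset. Since projective deformations with Picard number one are dense over every positive-dimensional subvariety of $S$ (using \cite[Proposition~26.6]{GHJ} for the resolution, exactly as in the proof of Theorem~\ref{theorem huybrechts}), I would choose a sufficiently general holomorphic arc $\Delta\hookrightarrow S$ through the base point so that, after shrinking, $\scrX_t$ is projective with Picard number one and $\phi_t$ is an isomorphism for every $t\in\Delta^\times$. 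Pulling back the universal families of Proposition~\ref{prop defo} gives $f:\scrX\to\Delta$ and $f':\scrX'\to\Delta$ with $\scrX_0=X$ and $\scrX'_0=X'$.

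\emph{Step 3 (the crux): gluing the fiberwise isomorphisms.} The main obstacle is to promote the pointwise isomorphisms $\phi_t$ $(t\neq0)$ to a single relative isomorphism over $\Delta^\times$. For this I would study the relative isomorphism scheme $\operatorname{Isom}_{\Delta^\times}(\scrX,\scrX')\to\Delta^\times$. It is unramified because $H^0(\scrX_t,T_{\scrX_t})=0$ by Proposition~\ref{proposition gkkp for symplectic}, and its fibers are finite: $\Aut(\scrX_t)$ is zero-dimensional (again by $H^0(T_{\scrX_t})=0$) and, having Picard number one, it preserves the ample ray, which forces finiteness. Hence $\operatorname{Isom}_{\Delta^\times}(\scrX,\scrX')\to\Delta^\times$ is finite \'etale, and it is surjective since each $\phi_t$ supplies a point. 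After a finite base change $\Delta\to\Delta$, $s\mapsto s^k$ (which leaves the central fibers $X,X'$ unchanged and is harmless for the conclusion), this cover admits a section, that is, a $\Delta^\times$-isomorphism $G:\scrX|_{\Delta^\times}\isom\scrX'|_{\Delta^\times}$. The two delicate points are the finiteness of $\Aut(\scrX_t)$ and the monodromy of the $\operatorname{Isom}$-cover: the base change disposes of the latter, while the former is precisely where projectivity and Picard number one of the general fiber are used.

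\emph{Step 4: conclusion.} The section $G$ is exactly an isomorphism $\scrX^*=f^{-1}(\Delta^\times)\isom (f')^{-1}(\Delta^\times)=(\scrX')^*$ of families over $\Delta^\times$. Since $\scrX^*$ is the complement of the central divisor $\scrX_0$ and is therefore open and dense in $\scrX$, the isomorphism $G$ extends to a bimeromorphic map $\scrX\ratl\scrX'$ over $\Delta$; hence $\scrX$ and $\scrX'$ are birational over $\Delta$. This yields both assertions of the theorem.
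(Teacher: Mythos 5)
Your Step 3 is where the proof breaks down, and it breaks at exactly the point that carries the whole difficulty of the theorem. From ``unramified with finite fibers'' you conclude ``finite \'etale,'' but neither implication holds. \'Etaleness of $\operatorname{Isom}_{\Delta^\times}(\scrX,\scrX')\to\Delta^\times$ at $[\phi_t]$ is an \emph{openness} statement: the isomorphism $\phi_t$ must deform to isomorphisms of nearby fibers. The obstruction to this lives in $H^1(X_t,T_{X_t})\neq 0$ and vanishes only because the Kodaira--Spencer classes of the two families correspond under $\phi_t^*$; that in turn needs the local Torelli theorem (Corollary \ref{corollary local torelli}) together with the fact that $\phi_t^*$ agrees, up to sign, with the marking identification (Mumford--Tate generality, Corollary \ref{corollary mumford tate}) --- none of which appears in your argument (the inclusion of a single point $\{t_0\}\hookrightarrow\Delta^\times$ is unramified with finite fibers). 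Separately, finiteness of the \emph{map}, as opposed to its fibers, is a \emph{properness} statement: isomorphisms $\phi_{t_n}$ with $t_n\to t\in\Delta^\times$ must converge to an isomorphism, i.e.\ the graphs $\Gamma_{t_n}$ must not degenerate to cycles with extra components; without this, $\operatorname{Isom}$ could be an \'etale surjection that is not a covering map (e.g.\ a connected double cover of $\Delta^\times$ with one point removed), and then no finite base change produces a section. There is also a secondary gap in Step 2 that feeds into this: Theorem \ref{theorem huybrechts} produces isomorphisms only for \emph{very general} $t$ (its proof uses projective fibers of Picard number one, a countable intersection of dense open conditions), so the non-isomorphism locus is a priori only a countable union of proper closed analytic subsets, not a closed analytic subset. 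A disk through $0$ can thus only be arranged so that $\phi_t$ is an isomorphism off a countable subset of $\Delta$; asserting that the isomorphism locus is open is essentially the statement being proven.

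For comparison, the paper's proof is Huybrechts' degeneration-of-graphs argument, with the modifications of \cite[Theorem 1.1]{LP}; the same mechanism is written out in the proof of Lemma \ref{lemma density}. One considers the graphs $\Gamma_t\subset\scrX_t\times\scrX'_{t}$ for the uncountably many $t$ where $\phi_t$ is an isomorphism; they define points of the relative Douady space of $\scrX\times_\Delta\scrX'\to\Delta$, which has only countably many irreducible components, so a single component contains uncountably many of them and hence dominates $\Delta$. Its universal family yields a \emph{holomorphic} family of cycles $\Gamma$, and for a holomorphic family the locus where $\Gamma_t$ is the graph of an isomorphism is open, so after shrinking it is all of $\Delta^\times$. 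This one device supplies precisely the two ingredients your $\operatorname{Isom}$-scheme route is missing: holomorphic variation (openness) comes from having an actual family, and compactness of limits (properness) comes from the cycle-space component. If you wished to salvage your approach, you would need to prove \'etaleness via the Kodaira--Spencer matching sketched above and properness via a Matsusaka--Mumford type extension theorem for polarized families of non-uniruled (here singular symplectic) varieties; both are genuine arguments, not formal consequences of $H^0(T_{X_t})=0$ and Picard number one.
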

\begin{proof}
 The argument of Huybrechts works in this context almost literally, see \cite[Theorem 1.1]{LP} for the necessary changes.
\end{proof}

\subsection{Algebraically Coisotropic Subvarieties}\label{section algebraically coisotropic}
Following Voisin \cite[Definition 0.6]{Vo15}, we call a subvariety $P\subset Y$ of an irreducible symplectic manifold an \emph{algebraically coisotropic subvariety} if it is coisotropic and admits a rational map $\phi: P \ratl B$ onto a variety of dimension $\dim Y - 2\cdot\codim P$ such that the restriction of the symplectic form to $P$ satisfies $\sigma\vert_P=\phi^*\sigma_B$ for some 2-form $\sigma_B$ on $B$. In the remainder of this section, we use the deformation theoretic techniques developed so far to study algebraically coisotropic subvarieties in families.

\begin{proposition}\label{proposition cmsb}
Every irreducible component $P$ of the exceptional locus of an irreducible symplectic resolution $\pi:Y\to X$ of a projective symplectic variety $X$ is algebraically coisotropic and the coisotropic fibration of $P$ is given by the restriction $\pi\vert_P:P\to B:=\pi(P)$. In particular, it is  holomorphic. Moreover, the general fiber of $\pi\vert_P$ is rationally connected.
\end{proposition}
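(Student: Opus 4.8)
The plan is to work in an analytic neighbourhood of a general point $b$ of the image $B:=\pi(P)$ and to reduce the whole statement to a transverse symplectic resolution. First I would invoke Kaledin's symplectic stratification of $X$: let $S\subset B$ be the open dense stratum through $b$. By Kaledin's formal product decomposition \cite[Theorem 2.3]{Kal} -- available in our compact K\"ahler setting thanks to Remark \ref{remark kaledin} -- together with Artin algebraization exactly as in Corollary \ref{corollary from kaledin}, there is an analytic isomorphism of germs $(X,b)\cong (S,b)\times (Z,0)$ where $Z$ is a symplectic variety with a single $0$-dimensional symplectic stratum, $\dim Z=\codim_X B=:2m$, and the symplectic form splits as $\sigma_X=\sigma_S\boxplus\sigma_Z$. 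The resolution $\pi$ then decomposes as $\id_S\times g$ for a symplectic resolution $g:\tilde Z\to Z$, and along the general fibre the germ of $Y$ looks like $(S,b)\times(\tilde Z,E)$ with $\sigma=\sigma_S\boxplus g^*\sigma_Z$ and $E:=g^{-1}(0)$. Under this identification the irreducible component $P$ corresponds to $S\times E_i$ for an irreducible component $E_i$ of $E$, and the general fibre of $\pi|_P$ is $E_i$.

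Next I would verify the two defining properties of algebraic coisotropy. Since $g$ contracts $E$ to the point $0$, for tangent vectors $u,v$ to $E_i$ we have $(g^*\sigma_Z)(u,v)=\sigma_Z(dg\,u,dg\,v)=0$, so $E_i$ is isotropic in $\tilde Z$ and $(g^*\sigma_Z)|_{E_i}=0$. Consequently $\sigma|_P=\sigma_S\boxplus (g^*\sigma_Z)|_{E_i}=\pr_S^*\sigma_S$, that is $\sigma|_P=(\pi|_P)^*\sigma_B$ with $\sigma_B:=\sigma_S$ the symplectic form on $B^\reg=S$; this simultaneously exhibits $\pi|_P:P\to B$ as the null fibration of $\sigma|_P$, which is manifestly a morphism, proving holomorphicity. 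Because $\sigma_B$ is nondegenerate, the kernel of $\sigma|_P$ at a general point is precisely $T E_i$, so $P$ is coisotropic if and only if $\dim E_i=\codim_Y P$, equivalently $\dim E_i=m$, i.e. $E_i$ is Lagrangian in $\tilde Z$; in that case one gets $\dim B=\dim Y-2\codim_Y P$ as the definition demands.

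The main obstacle is exactly this sharp dimension count: isotropy only yields $\dim E_i\leq m$, and I need equality. This is where the structure theory of symplectic resolutions is indispensable. By the semismallness of symplectic resolutions \cite[Lemma 2.11]{Kal} (valid in the K\"ahler setting by Remark \ref{remark kaledin}) together with Wierzba's analysis of symplectic contractions, every exceptional component is relevant, so the central fibre of $g$ is pure of dimension $m$, i.e. Lagrangian. If projectivity is required to apply these inputs, I would first deform $\pi$ to a projective symplectic resolution by Corollary \ref{corollary projective}; since $P$ and $B$ deform in the locally trivial family and both $\sigma|_P$ and the relevant dimensions vary continuously, coisotropy transports back to the original $\pi$. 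Granting the Lagrangian property, $P$ is coisotropic, $\dim B=\dim Y-2\codim_Y P$, and $\sigma|_P=(\pi|_P)^*\sigma_B$, so $P$ is algebraically coisotropic with coisotropic fibration given by $\pi|_P$.

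Finally, for the last assertion, the general fibre of $\pi|_P$ is the irreducible component $E_i$ of the central fibre of the crepant, in particular klt, resolution $g:\tilde Z\to Z$. By the theorem of Hacon--McKernan on Shokurov's rational connectedness conjecture, the fibres of resolutions of klt singularities are rationally chain connected, and for a crepant resolution with $\tilde Z$ smooth these fibres are in fact rationally connected; hence $E_i$ is rationally connected. The only delicate point here is passing from rational chain connectedness of the full fibre $\pi^{-1}(b)$ to rational connectedness of the single component $E_i$, which I would handle using the known explicit structure of fibres of symplectic resolutions.
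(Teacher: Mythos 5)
Your reduction to a transverse slice is an attractive idea, but it rests on a step that nothing you cite provides: the claim that the resolution itself decomposes as $\id_S\times g$ over the germ $(X,b)\iso (S,b)\times (Z,0)$. Kaledin's Theorem 2.3 (and its analytic upgrade via Artin, as in Corollary \ref{corollary from kaledin}) decomposes the germ of the \emph{singular variety} $X$ along a stratum; it says nothing about a given symplectic resolution $\pi:Y\to X$ being a product over that germ. Symplectic resolutions are not unique (they differ by flops), so there is no uniqueness principle that forces a resolution of a product germ to be a product of resolutions, and no such statement appears in \cite{Kal} or \cite{Wierzba}. Since everything downstream in your argument --- the identification $P\iso S\times E_i$, the formula $\sigma\vert_P=\pr_S^*\sigma_S$, the reduction of the dimension count to the slice, and the application of Hacon--McKernan to $g$ --- depends on this product structure, the gap is structural rather than cosmetic. (A smaller but related problem: your pointwise isotropy computation $\sigma_Z(dg\,u,dg\,v)=0$ is not legitimate as written, because on the exceptional set the symplectic form of $\tilde Z$ is the holomorphic \emph{extension} of the pullback, not a pullback --- $\sigma_Z$ is not even defined at $0$; this is exactly why isotropy of fibers is a lemma, namely \cite[Lemma 2.9]{Kal}, which the paper applies directly to $\pi$ without any product decomposition, combined with \cite[Theorem 1.2]{Wierzba} for the dimension count.)

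The second genuine gap is the last step. Hacon--McKernan \cite[Corollary 1.5]{HM} gives rational \emph{chain} connectedness of the fibers, and the fibers (and their components $E_i$) are in general singular varieties, for which rational chain connectedness does not imply rational connectedness --- the cone over an elliptic curve is the standard counterexample; the paper flags precisely this point in the remarks following the proposition, and also notes that the proof of \cite[Theorem 9.1]{CMSB}, which would give what you want, appears to be incomplete. Your closing sentence defers this upgrade to ``the known explicit structure of fibres of symplectic resolutions,'' but no such structure theory exists in this generality: this upgrade is the actual content of the statement. The paper's proof supplies it by a direct geometric argument: rational curves on a symplectic $2n$-fold move in families of dimension at least $2n-2$, one takes a contracted rational curve of minimal degree against an ample class, and then the Chow scheme, the Rigidity Lemma, and Bend-and-Break force a $(\dim F -1)$-dimensional family of irreducible rational curves through a general point of the general fiber $F$ with no extra basepoints, which yields genuine rational connectedness. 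Any repair of your approach would still need an argument of this kind (or a proof of the product decomposition plus a rational connectedness statement for the slice), so as it stands the proposal does not close the two essential difficulties.
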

\begin{proof}
Let $F$ denote a resolution of singularities of the general fiber of $P\to B$. By \cite[Lemma 2.9]{Kal} it follows that the pullback of the symplectic form $\sigma$ of $Y$ to $F$ vanishes identically so that $F$ is isotropic. It remains to show that $\dim B = 2(n - \dim F)$ where $\dim Y=2n$.
 This is a consequence of \cite[Theorem 1.2]{Wierzba}. 

To prove rational connectedness, we use that for every rational curve $C$ on a symplectic variety admitting an irreducible symplectic resolution the morphism $\nu:\P^1 \to C \subset Y$ obtained by normalization and inclusion deforms in a family of dimension at least $2n-2$, see \cite[Corollary 5.1]{Ran2} or \cite[Proposition 3.1]{CP}. Being an exceptional locus, every fiber of $\pi:P\to B$ is rationally chain connected by \cite[Corollary 1.5]{HM07}. In particular, there are rational curves in $P$ that are contracted by $\pi$. Let $H$ be an ample divisor on $Y$ and take an irreducible rational curve $C$ on $P$ contracted by $\pi$ such that the intersection product $H.C$ is minimal among all such rational curves on $P$. 
In the Chow scheme of $Y$ we look at an irreducible component $\Ch$ containing $[C]$ of the locus parametrizing rational curves. 
Let $U \subset \Ch \times Y$ be the graph of the universal family of cycles. As $C$ is contracted by $\pi$, the same holds true for all curves in $\Ch$ (otherwise e.g. the intersection with a pullback of an ample divisor from $X$ would change) and as $U$ is irreducible, we have in fact $U \subset \Ch \times P$. By minimality of $H.C$ all points in $U$ correspond to irreducible and reduced rational curves. Thus, $U\to \Ch$ is a family of curves in the fibers of $P \to B$.
 
By what we noted above, $\dim \Ch\geq 2n-2$ and thus a simple dimension count and the fact that a positive dimensional family of rational curves with two basepoints has to have reducible or nonreduced members (Bend and Break, see e.g. \cite[Theorem (5.4.2)]{Ko}) imply that through the general point of a general fiber $F$ of $P\to B$ there is a family of rational curves without further basepoints of dimension $\dim(F)-1$. Consequently, $F$ is rationally connected.
\end{proof}

\begin{remark}
Rational connectedness of $F$ would follow from \cite[Theorem 9.1]{CMSB} the proof of which however seems to be incomplete. Instead, it might also be possible to use \cite[Theorem 2.8 (2)]{CMSB} and the well-known fact that a variety is rationally connected if and only if it contains a very free rational curve, see e.g. \cite[Ch IV, 3.7 Theorem]{Ko}.
\end{remark}

\begin{remark}
As mentioned in the proof of Proposition \ref{proposition cmsb}, rational chain connectedness follows from the much stronger result \cite[Corollary 1.5]{HM07}. This notion coincides for smooth varieties with rational connectedness, however, this is not the case for singular varieties. The cone over an elliptic curve is the easiest example of a variety which is rationally chain connected but not rationally connected. 
\end{remark}

Recall from \cite[Theorem 1.1]{LP2} that given an algebraically coisotropic subvariety $P$ with almost holomorphic coisotropic fibration $\phi:P \ratl B$ whose generic fiber $F$ is smooth, the subvariety $F$ deforms all over its Hodge locus $\Hdg_F \subset \Def(Y)$. Moreover, if for $t \in \Hdg_F$ we denote by $Y_t$ the corresponding deformation of $Y$, then the deformations of $F$ inside $Y_t$ cover an algebraically coisotropic subvariety $P_t \subset Y_t$ with $F_t$ as a generic fiber of the coisotropic fibration. It seems, however, unclear how to relate the cycle class of $P_t$ with that of $P$ let alone to show that $P_t$ is a flat deformation. 

In the context of birational contractions of symplectic varieties (in dimension $\geq 4$ at least) it is rather common that the generic fiber of the exceptional locus over its image is smooth. Thus, it is worthwhile to mention that the main result of \cite{LP2} can be strengthened in this special situation.
But first we need some notation.

Let $F \subset Y$ be a closed subvariety in an irreducible symplectic manifold. If $\scrY \to \Def(Y)$ denotes the universal deformation we let $\scrH \to \Def(Y)$ be the union of all those components of the relative Hilbert scheme (or Douady space) of $\scrY$ over $\Def(Y)$ which contain $[F]$. We define the closed subspace $\Def(Y,F) \subset \Def(Y)$ to be the scheme theoretic image of $\scrH \to \Def(Y)$; this is the space of deformations of $Y$ that contain a deformation of $F$. 

\begin{theorem}\label{theorem pacienza}
Let $\pi:Y\to X$ be an irreducible symplectic resolution of a projective symplectic variety $X$, let $P\subset Y$ be the exceptional locus of $\pi$, put $B:=\pi(P)$, and let $\scrY \to \scrX$ be the restriction of the universal deformation of $Y\to X$ over $\Def(Y,N)$. Suppose that $P$ is irreducible and that a general fiber $F$ of $\pi:P\to B$ is smooth. Then we have $\Def(Y,N) \subset \Def(Y,F)$ and the Hodge locus $\Hdg_P$ of $P$ contains $\Def(Y,N)$.
\end{theorem}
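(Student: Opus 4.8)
The plan is to exploit the fact, recorded in Proposition~\ref{prop defo} and Corollary~\ref{corollary deformation remains isv}, that over $S:=\Def(Y,N)=\Def^\lt(X)$ the contraction $\pi$ itself deforms: for the family $\scrY\to\scrX\to S$ of that proposition, each fibre $\pi_t\colon\scrY_t\to\scrX_t$ is again an irreducible symplectic resolution. First I would assemble from these the \emph{relative} exceptional locus $\scrP:=\Exz(\scrY/\scrX)\subset\scrY$ and its image $\scrB\subset\scrX$, noting that on the fibre over $t$ they restrict to the coisotropic fibration $\pi_t|_{P_t}\colon P_t\to B_t$ of Proposition~\ref{proposition cmsb}, where $P_t=\Exz(\pi_t)$ and $B_t=\pi_t(P_t)$. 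Since $\scrX\to S$ is locally trivial, the singular type of $\scrX_t$ is constant, so $\scrY\to S$ is a smooth proper family whose underlying topological fibration is trivial over the germ $S$, and $\scrP$ is topologically a product of the irreducible $P$ with the smooth (hence irreducible) germ $S$. In particular $\scrP$, and therefore its image $\scrB$, is irreducible.

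For the inclusion $S\subset\Hdg_P$ I would argue directly with cycle classes. Put $c=\codim_Y P$. Because $\scrY\to S$ is topologically trivial over the germ $S$, the groups $H^{2c}(\scrY_t,\Q)$ assemble into the local system $R^{2c}\pr_*\Q$, and the identifications $H^{2c}(\scrY,\Q)\cong H^{2c}(\scrY_t,\Q)$ are those of parallel transport. The relative cycle $\scrP$ is proper over $S$ with fibres $P_t$ of constant dimension $\dim P$, so it defines a class $[\scrP]\in H^{2c}(\scrY,\Q)$ restricting on each fibre to $[P_t]$; this exhibits $[P_t]$ as the parallel transport of $[P]$. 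As $P_t$ is a complex-analytic subvariety of $\scrY_t$, its class is of type $(c,c)$, so $[P]$ remains a Hodge class all along $S$, which is precisely the assertion $\Def(Y,N)\subset\Hdg_P$.

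For the inclusion $\Def(Y,N)\subset\Def(Y,F)$ I would produce deformations of $F$ lying in the correct component of the relative Douady space. Let $\scrB^\circ\subset\scrB$ be the open locus over which the fibre of $\scrP\to\scrB$ is smooth of dimension $\dim F$; since the general fibre of $\pi|_P$ is smooth and smoothness is open, $\scrB^\circ$ is dense in $\scrB$ and dominates $S$. Over $\scrB^\circ$ these fibres form a flat family, so $b\mapsto[F_b]$ defines a morphism $\iota\colon\scrB^\circ\to\sD$ into the relative Douady space $\sD$ of $\scrY/\Def(Y)$, and $\iota$ passes through $[F]$ (take $b$ a general point of $\scrB_0$). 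As $\scrB^\circ$ is irreducible, $\overline{\iota(\scrB^\circ)}$ lies in a single irreducible component $\scrH_0$ of $\sD$; since $[F]\in\scrH_0$, the component $\scrH_0$ is one of those whose union is the space $\scrH$ in the definition of $\Def(Y,F)$, i.e. $\scrH_0\subset\scrH$. Finally the composite $\scrB^\circ\to\scrH_0\to\Def(Y)$ is the map $\scrB^\circ\to S\hookrightarrow\Def(Y)$, which dominates $S$; hence the scheme-theoretic image of $\scrH_0$, and a fortiori the scheme-theoretic image of $\scrH$, contains $S$. This gives $\Def(Y,N)\subset\Def(Y,F)$.

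The step I expect to be most delicate is this last one: guaranteeing that the deformed fibres $F_t$ genuinely lie in the component of the Douady space through $[F]$, rather than scattering across several components. This is exactly what forces the irreducibility bookkeeping for $\scrP$ and $\scrB$, and it also requires checking that the locus $\scrB^\circ$ of smooth fibres is dense and dominant over $S$ (so that the smoothness hypothesis on $F$ propagates to nearby $t$) and that the evaluation $b\mapsto[F_b]$ is an honest morphism to the relative Douady space, which needs flatness of the fibration over $\scrB^\circ$. By contrast, once the relative exceptional locus $\scrP\to S$ is in hand, the cycle-class argument for $\Hdg_P$ is essentially formal.
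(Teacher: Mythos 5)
Your argument has a genuine gap at its foundation: the claim that, because $\scrX\to S$ is locally trivial and $\scrY\to S$ is a smooth proper family, the relative exceptional locus $\scrP=\Exz(\scrY/\scrX)$ is topologically a product $P\times S$ (hence irreducible, with fibres $P_t$ of constant dimension and constant cohomology class). Topological triviality of $\scrY\to S$ (Ehresmann over a germ) trivializes the ambient family only; the trivialization has no reason to respect the subvariety $\scrP$. Likewise, local triviality of $\scrX\to S$ controls the singular locus $\scrB\subset\scrX$ downstairs, but not the geometry of its preimage upstairs: the fibres of $\pi_t$ over $\scrB_t$ could a priori change as $t$ varies. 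What you are implicitly assuming is that $\scrY\to\scrX$ is a locally trivial deformation of the morphism $Y\to X$ --- and this is precisely Question \ref{question contraction deforms locally trivially} of the paper, which the authors explicitly cannot resolve: in their proof they remark that if this local triviality were known ``the claim would follow immediately,'' and that they therefore ``have to argue differently.'' Both halves of your proof rest on this unproven structure: the cycle-class argument for $\Hdg_P$ needs $\scrP\to S$ to be (generically) a family with fibres of constant dimension and class, and the Douady-space argument for $\Def(Y,F)$ needs $\scrP\to\scrB$ to restrict over a dense open set to a flat family of smooth deformations of $F$; neither is available without the product structure.

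The paper's proof runs in the opposite logical direction, and this reversal is the essential idea you are missing. It first proves $\Def(Y,N)\subset\Def(F,Y)$ Hodge-theoretically: Proposition \ref{proposition cmsb} shows the general fibre $F$ is rationally connected, hence $H^1(F,\sO_F)=0$, so \cite[Theorem 1.1]{LP2} applies; by \cite[Corollary 1.2 and 1.3]{LP2} the deformation space $\Def(F,Y)$ is smooth and is identified by the period map with $Q\cap\P(K)$, where $K=\ker\bigl(H^2(Y,\C)\to H^2(F,\C)\bigr)$. Since $F$ is contracted by $\pi$ to a point, $H^2(X,\C)\subset K$, and since $H^2(X,\C)^\perp=N$ this gives $\Def(Y,N)\subset\Def(F,Y)$. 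Only then is a family $\scrP$ constructed --- not as the relative exceptional locus, but as the image in $\scrY$ of the universal family of deformations of $F$ over the relative Douady space, restricted to curve germs $\Delta\subset\Def(Y,N)$; irreducibility of that image (obtained via Stein factorization and the Rigidity Lemma) is what makes $\scrP_\Delta\to\Delta$ flat and identifies the central fibre with a multiple of $P$, yielding the statement on $\Hdg_P$. In short: the paper deduces the existence of a good family $\scrP$ from the deformability of $F$, whereas you tried to deduce the deformability of $F$ from a good family $\scrP$ whose existence is exactly the open point.
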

\begin{proof} 
Let $\scrF \to \scrH \to \Def(Y,F)$ be the universal deformation of $F$ over the closed subspace $\scrH$ of the relative Hilbert scheme of $\scrY \to \Def(Y,N)$. 
By Proposition \ref{proposition cmsb} the variety $P$ is algebraically coisotropic and the fibers of $\pi:P\to B$ are rationally connected so that $H^1(F,\sO_F)=0$ and \cite[Theorem 4.6]{LP2} can be applied. We deduce that $\Def(Y,F)$ and $\scrH\to\Def(Y,F)$ are smooth at $0$ respectively $[F]$. In particular, $\scrH$ is irreducible. Moreover, by \cite[Corollary 1.2]{LP2} the period map identifies $\Def(Y,F)$ with $Q \cap \P(K)$ where $Q \subset \P(H^2(Y,\C))$ is the period domain of the irreducible symplectic manifold $Y$ and $K=\ker(H^2(Y,\C)\to H^2(F,\C))$. If $b\in B$ denotes the point with $F=\pi^{-1}(b)$, then by commutativity of $$\xymatrix{F \ar[r]\ar[d]& Y \ar[d]\\ \{b\} \ar[r] & X\\}$$  it follows that we have $H^2(X,\C) \subset K$ and hence using $H^2(X,\C)^\perp = N$ and the period map once more we obtain $\Def(Y,N) \subset \Def(Y,F)$.

In order to show that $P$ remains a Hodge class all over $\Def(Y,N)$ we will construct flat families $\scrP_\Delta \to \Delta$ over curves $\Delta \subset \Def(Y,N)$ passing through the origin such that the cycle underlying the central fiber $\scrP_{\Delta,0}$ is a multiple of $P$. To this end we replace $\scrF \to \scrH$ as well as $\scrY \to \scrX$ by their restrictions to a given smooth curve germ $\Delta \subset \Def(Y,N)$ and obtain morphisms
$ \xymatrix{
\scrH & \ar[l] \scrF \ar[r]& \scrY \ar[r] &\scrX\\
 }$
over $\Delta$. The map in the middle is induced by the projection to the second factor of $\scrF \subset \scrH \times \scrY$. As $\Delta$ is smooth and smoothness is stable under base change we may still assume that $\scrH$, $\scrH \to \Delta$ and $\scrF \to \scrH$ are smooth at $[F]$ respectively in a neighborhood of $F\subset \scrF$. In particular, there is still a unique irreducible component of $\scrH$ passing through $[F]$ and by shrinking the representative of $\Def(Y,F)$ and throwing away components of $\scrH$ we may assume that $\scrH$ is irreducible.

On the other hand, as $\scrX \to \Delta$ is a locally trivial deformation, it induces a flat (even locally trivial) deformation of all components of its singular locus (with the reduced structure). Let $\scrB \to \Def(Y,N)$ be the so induced deformation of $B$. Then $\scrB \subset \scrX$ is an irreducible (and reduced) subspace.
If we knew that also $\scrY \to \scrX$ were a locally trivial deformation of $Y\to X$, the claim would follow immediately. As we cannot prove this so far, cf. Question \ref{question contraction deforms locally trivially}, we have to argue differently.

We take the unique closed irreducible and reduced subspace $\scrF' \subset \scrF$ that coincides with $\scrF$ in a neighborhood of $F$. As $\rho:\scrF' \to \scrH$ is proper it therefore is surjective as well. If we take the Stein factorization, $\scrF'\to \tilde\scrH \to \scrH$, then
by the Rigidity Lemma (see e.g. \cite[Lemma 1.15]{Debarre}) there is a commutative diagram
\[\xymatrix{
\scrF \ar[d]\ar[r]& \scrY\ar[d]\\
\tilde\scrH \ar[r]&\scrX\\
}\] 
Note that $\tilde\scrH \to \scrH$ is finite, birational, and an isomorphism over $[F]\in \scrH$ and that $\tilde\scrH$ is irreducible. Moreover, the image of $\tilde\scrH \to \scrX$ coincides in a neighborhood of $[f]\in \tilde\scrH$ with the closed subvariety $\scrB \subset\scrX$ thanks to the smoothness of $\scrF \to \scrH$ in a neighborhood of $F\subset \scrF$. Invoking the irreducibility of $\tilde\scrH$ and $\scrB$ we conclude that we have $\tilde\scrH\onto \scrB \subset \scrX$. We define $\scrP  = \scrP_\Delta \subset \scrY$ to be the image of $\scrF' \to \scrY$. The variety $\scrF'$ being irreducible the same holds true for $\scrP$ and hence the induced map $\rho:\scrP \to \Delta$ is flat. It remains to show that $P\subset \scrP$ is the unique component of the central fiber $\scrP_0$ of $\rho$ of dimension $\dim P$. This follows from the irreducibility of $P$ by invoking the Rigidity Lemma once more.
\end{proof}

Recall from \cite[Definition 1.5]{Vo15} that a cohomology class $p \in H^{2i}(Y,\C)$ on a symplectic manifold is called \emph{coisotropic} if it is a Hodge class and $[\sigma]^{n-i+1}\cup p = 0$ in $H^{2n+2}(Y,\C)$ where $\sigma$ is the symplectic form on $Y$. We refer to Huybrechts' article \cite[Definition 3.1]{Hu14} for the notion of a constant cycle subvariety. This is roughly speaking a subvariety of a given variety all of whose points have the same cycle class in the ambient variety.

\begin{corollary}
In the situation of Theorem \ref{theorem pacienza}, the class of $[P]$ remains an effective coisotropic Hodge class all over $S=\Def(Y,N)$. Moreover, there are varieties $P_t \subset Y_t$ for each $t\in S$ representing (a multiple of) $[P]$ which are algebraically coisotropic with rationally connected fibers. In particular, the fibers are constant cycle subvarieties of $Y_t$.
\end{corollary}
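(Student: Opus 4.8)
The plan is to assemble the three assertions from Theorem~\ref{theorem pacienza}, the result of \cite{LP2} recalled just above it, and Proposition~\ref{proposition cmsb}, treating the effective-coisotropic-Hodge-class statement, the algebraically coisotropic $P_t$ with rationally connected fibers, and the constant cycle property in turn. Throughout I write $c=\codim P$ and regard the class $[P]$ as transported into each fiber $Y_t$, $t\in S=\Def(Y,N)$, via the Gauss--Manin local system.

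First I would dispose of the Hodge and effectivity statements. By Theorem~\ref{theorem pacienza} the Hodge locus $\Hdg_P$ contains $S$, so the parallel transport of $[P]$ stays of Hodge type in every fiber $Y_t$. For effectivity I would invoke the flat families $\scrP=\scrP_\Delta\subset\scrY$ built in the proof of Theorem~\ref{theorem pacienza} over smooth curve germs $\Delta\subset S$ through the origin: since $\scrP\to\Delta$ is flat and its central fiber is supported on $P$ with some multiplicity $k$, all fibers are homologous, so $[\scrP_{\Delta,t}]=k[P]$ for every $t\in\Delta$. As $S$ is smooth, every $t\in S$ lies on such a germ $\Delta$ (e.g. a line through $0$ and $t$ in a chart), so $k[P]$ is represented by the effective cycle $P_t:=\scrP_{\Delta,t}$ in each fiber.

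Next I would establish the algebraically coisotropic structure and rational connectedness. By Proposition~\ref{proposition cmsb} the fibration $\pi|_P:P\to B$ is algebraically coisotropic with rationally connected general fiber $F$ (in particular $H^1(F,\sO_F)=0$ and $F$ smooth), so $S\subset\Def(Y,F)\subseteq\Hdg_F$ and the result of \cite[Theorem~1.1]{LP2} recalled above applies: the deformations of $F$ inside $Y_t$ sweep out an algebraically coisotropic $P_t$ with generic fiber $F_t$ a deformation of $F$. The key point is then to \emph{identify} this $P_t$ with $\scrP_{\Delta,t}$ --- both are by construction the locus covered by the deformations of $F$, i.e. the image of $\scrF'\to\scrY$ --- so that the algebraically coisotropic $P_t$ is precisely the one whose class is the controlled multiple $k[P]$. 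Rational connectedness of $F_t$ then follows from that of $F$ since rational connectedness is a deformation invariant for smooth proper families in characteristic zero.

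Finally, for the coisotropic class I would argue fiberwise: Proposition~\ref{proposition cmsb} gives $\dim B_t=2(n-c)$ and $\sigma_t|_{P_t}=\phi_t^*\sigma_{B_t}$ for the coisotropic fibration $\phi_t:P_t\to B_t$, whence $(\sigma_t|_{P_t})^{\,n-c+1}=\phi_t^*(\sigma_{B_t}^{\,n-c+1})=0$ for degree reasons on $B_t$; pushing forward along the inclusion (on a resolution where needed) yields $[\sigma_t]^{\,n-c+1}\cup[P_t]=0$, i.e. $[P_t]$ is coisotropic in the sense of \cite[Definition~1.5]{Vo15}. Since each $F_t$ is rationally connected, any two of its points are rationally equivalent in $Y_t$, so $F_t$ is a constant cycle subvariety. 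The main obstacle I expect is precisely the identification in the previous paragraph: matching the subvariety $P_t$ produced by the abstract deformation argument of \cite{LP2} with the flat family $\scrP_\Delta$ of Theorem~\ref{theorem pacienza}, so that the algebraically coisotropic $P_t$ carries the controlled cycle class --- exactly the cycle-class issue flagged in the remark preceding Theorem~\ref{theorem pacienza}. Once this is in place, the Hodge type, the cohomological coisotropy identity, and the passage from rational connectedness to the constant cycle property are all formal.
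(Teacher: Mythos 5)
Your proposal is correct and follows essentially the same route as the paper: the paper's own (much terser) proof likewise extracts from the proof of Theorem \ref{theorem pacienza} that the fibers $P_t=\scrP_{\Delta,t}$ of the flat families represent a multiple of $[P]$ and are by construction covered by deformations of $F$ (which settles the identification you flag as the main obstacle, since $\scrP_\Delta$ \emph{is} defined as the image of $\scrF'\to\scrY$), and then concludes by deformation invariance of rational connectedness. Your additional explicit verifications --- effectivity via homology of fibers of the flat family, and the identity $[\sigma_t]^{n-c+1}\cup[P_t]=0$ from $\sigma_{B_t}^{n-c+1}=0$ for dimension reasons --- are details the paper leaves implicit, not a different argument.
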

\begin{proof}
It follows from the preceding theorem that the class $[P_t]$ of the subvarieties $P_t\subset Y_t$ for a deformation $Y_t$ of $Y$ with $t \in \Def(Y,N)$ is (a multiple of) $[P]$. The proof showed moreover that $P_t$ is covered by deformations of the general fiber $F$ of the coisotropic fibration of $P$. The claim follows as $F$ was rationally connected and rational connectedness is known to be invariant under deformations (for smooth varieties).
\end{proof}

\begin{question}\label{question contraction deforms locally trivially}  Let $X$ be a projective symplectic variety, $Y\to X$ an irreducible symplectic resolution, $N=\tilde q^{-1}(N_1(Y/X)_\Q)$ (cf. Lemma \ref{lemma symplektisch}), and
$\scrY\to\scrX$ be the morphism between universal families over $S:=\Def(Y,N)=\Def^{\mathrm{lt}}(X)$ from diagram \eqref{eq diag lt}. In this case we ask:
\[
\mbox{\emph{Is $\scrY \to \scrX$ a locally trivial deformation of $Y \to X$? }}
\]
\noindent Note that if this were the case, the whole diagram 
\[\xymatrix{
\scrE \ar[d]\ar[r]& \scrY\ar[d]\\
\scrS \ar[r]&\scrX\\
}\] 
of complex spaces over $S$ where $\scrS \to S$ is the singular locus of $\scrX \to S$ and $\scrE \to S$ is the exceptional locus of $\scrY \to \scrX$ were a locally trivial (in particular flat) deformation over $S$ of its central fiber.
\end{question}

\section{Period maps and monodromy groups}\label{sec monodromy}

In this section we first show that any irreducible symplectic resolution $\pi:Y\to X$ is deformable in the sense of Definition~\ref{definition deformable}.  We then use this to develop the global theory of locally trivial deformations.

 \subsection{Cones and Period Maps in the Smooth Case}\label{sect geometric cone stuff}

This subsection contains nothing new, and serves only to summarize known results about decompositions of the positive cone for irreducible symplectic manifolds. Let $Y_0$ be a fixed irreducible symplectic manifold and denote by $\Lambda'$ the abstract lattice underlying $H^2(Y_0,\Z)$. The moduli space of $\Lambda'$-marked irreducible symplectic manifolds is the complex space $\mathfrak{M}_{\Lambda'}$ whose points are isomorphism classes of pairs $(Y,\mu)$ consisting of an irreducible symplectic manifold $Y$ deformation equivalent to $Y_0$ together with an isometry $\mu:H^2(Y,\Z)\to \Lambda'$, also referred to as a \emph{marking}. The space $\mathfrak{M}_{\Lambda'}$ obtains the structure of a not necessarily Hausdorff complex space from patching the spaces $\Def(Y)$ using the local Torelli theorem, see \cite[1.18]{Huy99}. In particular, $\gothM_{\Lambda'}$ is smooth of dimension $h^{1,1}(Y_0)=\rk (\Lambda')-2$. We also refer to  \cite[\S 4.2]{huybourbaki} and the references therein.

The \emph{period map} for $\Lambda'$-marked irreducible symplectic manifolds is defined by
\begin{equation}\label{eq period map}
P:\gothM_{\Lambda'} \to \Omega_{\Lambda'}, \quad P(Y,\mu) = \mu\left(H^{2,0}(Y)\right).
\end{equation}
Recall that in this context there is the notion of a parallel transport operator and we may consider the corresponding monodromy group $\Gamma'\subset\O(\Lambda')$, see \cite[Definition 1.1]{markmantor} for details. We continue with some preliminary remarks regarding the K\"ahler cone of an irreducible symplectic manifold.

\begin{definition}\label{definition geometric cones}
In parallel to the notation of Setup \ref{setup cones}, we denote by $C(Y)\subset H^2(Y,\R)$ and $C^{1,1}(Y)\subset H^{1,1}(Y,\R)$ the cones of positive vectors for the Beauville--Bogomolov--Fujiki form $q_Y$. Note that in the notation of Setup~\ref{setup cones} we have $C(Y)=C(H^2(Y,\Z))$ and $C^{1,1}(Y)=C^{1,1}(H)$ where $H$ denotes the Hodge structure on $H^2(Y,\Z)$. The cone  $C^{1,1}(Y)$ has two connected components, and we define $C^{1,1}(Y)^+$ to be the component containing a K\"ahler class.  
\end{definition}

For an irreducible symplectic manifold $Y$, the cone $C^{1,1}(Y)^+$ further has a wall-and-chamber decomposition whose open chambers are the images of the K\"ahler cones of birational models of $Y$ under monodromy operators that preserve the Hodge structure (see \cite[Definition 5.10]{markmantor} for details).  Following Markman, the walls and chambers of this decomposition will be referred to as \emph{K\"ahler-type walls} and \emph{K\"ahler-type chambers}, respectively. The K\"ahler-type walls of $C^{1,1}(Y)^+$ have been described by Amerik--Verbitsky in terms of monodromy birationally minimal (MBM) classes \cite[Definition 1.13]{AV} (see also the wall divisors of \cite{Mongardi}) whose definition we now recall.  

\begin{definition}\label{definition mbm classes}
A nonzero class $\alpha\in H^{1,1}(Y,\Z)$ with $q_Y(\alpha)<0$ is $MBM_Y$ if up to the action of the monodromy group $\alpha^\perp \subset H^{1,1}(Y,\R)$ is a wall of the K\"ahler cone of a birational model of $Y$.  We say a class $\alpha\in H^2(Y,\Z)$ is $MBM$ if it becomes $MBM_{Y'}$ for some deformation $Y'$ of $Y$.
\end{definition}

The following proposition summarizes what we need about MBM classes and also explains our choice of terminology in Definition~\ref{definition kaehler type chambers}. 

\begin{proposition}[Amerik-Verbitsky]\label{proposition amerik verbistky}
Let $Y$ be an irreducible symplectic manifold, denote by $\Lambda'$ the abstract lattice underlying $(H^2(Y,\Z),q_Y)$, and let $\Gamma' \subset \O(\Lambda')$ be the monodromy group. Then the following hold.
\begin{enumerate}
\item The $MBM_Y$ classes are precisely the Hodge $MBM$ classes.
	\item The K\"ahler-type walls of $C^{1,1}(Y)^+$ in the sense of Markman are precisely the hyperplanes $\alpha^\perp$ for $\alpha$ ranging over all $MBM_Y$ classes. 
	\item Assume $b_2(Y)>5$. After a choice of marking, the MBM classes form an MBM collection $MBM_{\Lambda'}\subset \Lambda'$ of $\Lambda'$ for the monodromy group $\Gamma'\subset\O(\Lambda')$ in the sense of Definition \ref{def mbm coll}. In particular, the notion of K\"ahler-type walls and chambers from Definition \ref{definition kaehler type chambers} coincides the notion due to Markman in this case.
\end{enumerate}
\end{proposition}
\begin{proof}
The first statement follows because $MBM_Y$ classes are deformation invariant along deformations for which they remain of Hodge type $(1,1)$ by \cite[Theorem 1.17]{AV} and the second is by \cite[Theorem 1.19]{AV}.  For the third, MBM classes have bounded square assuming $b_2(Y)>5$ by \cite[Theorem 5.3]{AV17}. 
\end{proof}
In particular, if $b_2(Y)>5$ the K\"ahler cone is locally polyhedral in $C^{1,1}(Y)^+$ and if $\Pic(Y)$ is negative-definite there are only finitely many K\"ahler-type chambers. Next, we summarize the relation between the wall and chamber decomposition of the positive cone and the Torelli theorem. Recall the cone space $\Omega_\Lambda^\cone$ as defined after Definition~\ref{definition kaehler type chambers}.

\begin{theorem}[Markman, Verbitsky]\label{theorem fibers of the period map}
For any connected component $\mathfrak{N}_{\Lambda'}$ of $\mathfrak{M}_{\Lambda'}$ and any $(Y,\mu)\in\mathfrak{N}_{\Lambda'}$ the points in the same fiber of $P:\mathfrak{N}_{\Lambda'}\to \Omega_{\Lambda'}$ as $(Y,\mu)$ are in natural correspondence with the K\"ahler-type chambers of $C^{1,1}(Y)^+$.

Moreover, the natural enhancement $P^\mathrm{cone}$ of the period map obtained by ``taking the K\"ahler cone"

\[\xymatrix@R-2pc{
\mathfrak{N}_{\Lambda'}\ar[r]^{P^\mathrm{cone}}& \Omega^\mathrm{cone}_{\Lambda'}\\
(Y,\mu)\ar@{|->}[r]& (P(Y,\mu),K(Y))
}\]
where $K(Y)$ is the chamber of $C^{1,1}(Y)^+$ corresponding to the K\"ahler cone of $Y$ is an isomorphism.
\end{theorem}
\begin{proof}
The map $P^\mathrm{cone}$ is continuous as K\"ahler classes remain K\"ahler in nearby deformations provided they remain of type $(1,1)$, and the fact that it is an isomorphism is obtained by combining Verbitsky's global Torelli theorem \cite[Theorem 1.17]{V13} and Markman's results on the K\"ahler-type chamber decomposition \cite[Theorem 5.16]{markmantor}.
\end{proof}

\subsection{Deformability of resolutions}\label{section deformability}

Let $\pi:Y \to X$ be an irreducible symplectic resolution. Here we show that $\pi$ is deformable. We will need the following

\begin{lemma}\label{lemma mbm}Let $\pi:Y\to X$ be an irreducible symplectic resolution with $b_2(Y)> 5$.  Then $N(Y/X):=(\pi^*H^2(X,\Z))^\perp \subset H^2(Y,\Z)$ is rationally generated by $MBM_Y$ classes.
\end{lemma}
\begin{proof}  
We know that that $N(Y/X)^\perp$ in $H^{1,1}(Y,\R)$ intersects the nef cone of $Y$ in an extremal face $\tau$ because $N(Y/X)$ is generated by the duals of curves that are contracted. As $\tau$ contains a positive class $\omega$ (namely the pullback of a K\"ahler class of $X$) and the nef cone is polyhedral near $\omega$ (by Remark \ref{remark MBM gen}) and cut out by $\alpha^\perp$ for $\alpha$ ranging over all $MBM_Y$ classes, it follows that $N(Y/X)_\Q$ is generated by $MBM_Y$ classes.   
\end{proof}

We are thus in the situation of Setup \ref{setup mbm sublattice} where we can ``restrict" the cone decomposition of $Y$ to $X$.  Precisely, given an irreducible symplectic resolution $\pi:Y\to X$ we denote by $H$ the Hodge structure on $H^2(X,\Z)$ and set
\[
C^{1,1}(X)^+:= C^{1,1}(H)^+
\]
in analogy with Definition~\ref{definition geometric cones}. We let $\Lambda'$ be the lattice underlying $H^2(Y,\Z)$, $M'$ be the MBM classes in $\Lambda'$,  and $\Lambda$ be the sublattice underlying $\pi^*H^2(X,\Z)$. In the notation of Setup~\ref{setup mbm sublattice} we then have that $N$ is given by $\left(\tilde q_Y\right)^{-1}(N_1(\tX/X)_\Q)$ for $\tilde q_Y$ as in Lemma~\ref{lemma symplektisch}, and the role of the cones $C^{1,1}(H_p')^+$ and $C^{1,1}(H_p)^+$ is played by $C^{1,1}(Y)^+$ and $C^{1,1}(X)^+$, respectively. 
We thereby refer to the resolution chambers of $C^{1,1}(Y)^+$ with respect to $\pi$ in the sense of Definition \ref{definition resolution chambers}. We also obtain a decomposition of $C^{1,1}(X)^+$ into K\"ahler type chambers. This will be needed in the proof of the following:

\begin{proposition}\label{proposition nonproj deform}Every irreducible symplectic resolution $\pi:Y\to X$ with $b_2(X)>4$ is deformable.
\end{proposition}
\begin{proof}  
Note that if $b_2(Y)=b_2(X)$ then $\pi$ is an isomorphism by \eqref{eq relative homology sequence} and there is nothing to prove.  We may thus assume $b_2(Y)>5$.  Choose a marking $\mu:\Lambda'\xrightarrow{\cong} H^2(Y,\Z)$ and define $\Lambda:=\mu^{-1}(\pi^*H^2(X,\Z))$.   Let $\mathfrak{N}_{\Lambda'}$ be the component of $\mathfrak{M}_{\Lambda'}$ containing $(Y,\mu)$.  Consider the subspace $\mathfrak{N}_{\Lambda',\Lambda}$ of $\mathfrak{N}_{\Lambda'}$ for which $N=\Lambda^\perp$ is Hodge, i.e. $\mathfrak{N}_{\Lambda',\Lambda}$ is the preimage of $\Omega_{\Lambda}\subset \Omega_{\Lambda'}$ under the period map \eqref{eq period map} restricted to $\gothN_{\Lambda'}$.  Let $\Gamma_N\subset\O(\Lambda')$ be the subgroup of the monodromy group of $Y$ which fixes $N$ pointwise.  For a choice of resolution chamber $\tau$ of $\pi$ (which we identify with its image in $\Lambda'$), let $\mathfrak{N}_{\Lambda',\Lambda}^\tau\subset\mathfrak{N}_{\Lambda',\Lambda}$ be the subspace of those $(Y',\mu')$ whose K\"ahler cone is contained in $\tau$ and has $\Lambda$ as a face.  By Theorem \ref{theorem fibers of the period map}, the map $\mathfrak{N}_{\Lambda'}\to \Omega_{\Lambda'}^\mathrm{cone}$ is an isomorphism. Evidently it restricts to an isomorphism $\mathfrak{N}^\tau_{\Lambda',\Lambda}\to \Omega^\mathrm{cone}_{\Lambda',\Lambda}(\tau)$ in the notation of Definition \ref{definition resolution chambers}.  By Proposition~\ref{prop face map}, the ``face" map
\[
\mathfrak{N}_{\Lambda',\Lambda}^\tau\to \Omega_\Lambda^\mathrm{cone}
\]
is an isomorphism.  Clearly $\Gamma_N$ acts on both $\mathfrak{N}_{\Lambda',\Lambda}^\tau$ and $\Omega_\Lambda^\mathrm{cone} $ in a compatible way. 

By Proposition \ref{proposition deformation remains isv} we may assume $X$ is not projective, and in particular not of maximal Picard rank.  Now, $\Gamma_N$ is a finite index subgroup of $\O(\Lambda)$, so by Proposition \ref{proposition verbitsky} and Corollary~\ref{corollary lift density} we can choose a projective point $(Y_0,\mu_0)\in \mathfrak{N}_{\Lambda',\Lambda}^\tau$ in the orbit closure of $(Y,\mu)$.  By the basepoint free theorem \cite[Theorem 6.1]{Ka85} there is a contraction \mbox{$\pi_0:Y_0\to X_0$} to a primitive symplectic variety contracting the classes in $N$, and moreover by Proposition \ref{proposition deformation remains isv}  this contraction deforms to $\mathscr{Y}_0\to\mathscr{X}_0$ over $\Def(Y_0,N)\subset \mathfrak{N}_{\Lambda',\Lambda}^\tau$, with $\mathscr{X}_0$ deforming locally trivially.  After changing the marking by an element of $\Gamma_N$, we have $(Y,\mu')\in \Def(Y_0,N)$, and therefore there is a deformable contraction $\pi':Y\to X'$.  As $\pi'$ contracts exactly $N$, it follows that this is in fact the original contraction $\pi:Y\to X$.  
\end{proof}

We record the following corollary of the proof, which is was pointed out by Amerik--Verbitsky \cite[Theorem 5.5]{AV19}.  The condition on the dimension of the face is required to ensure the contraction $X$ has $b_2(X)>4$.
\begin{corollary}\label{corollary can contract}Let $Y$ be a smooth irreducible symplectic manifold and $\tau\subset H^{1,1}(Y,\R)$ a face of the K\"ahler cone meeting the positive cone $C^{1,1}(Y)^+$ for which $\dim\tau>2$.  Then there is a birational contraction $\pi:Y\to X$ contracting precisely $\tau^\perp$.
\end{corollary}

\begin{remark}\label{remark rigid}
It should be noted that the assumption  $b_2(X)>4$ that we used several times throughout this section is nontrivial. Unlike in the smooth case, we know that there are examples of symplectic varieties $X$ with $b_2(X)=3$ {that admit an irreducible symplectic resolution}:  it may well happen that a birational contraction $Y \to X$ of an irreducible symplectic manifold $Y$ contracts a negative definite subspace of $H^2(Y,\R)$ of maximal dimension. In the case of $K3$ surfaces for example, one may take $X$ to be $S/G$ where $S$ is a $K3$ surface and $G$ is a group of symplectic automorphisms with minimal invariant second cohomology lattice $H^2(S,\Z)^G$ (i.e. of rank $3$).
 Finite groups of symplectic automorphisms were classified by Nikulin \cite{Nikulin} and Mukai \cite{Mukai}, explicit examples of groups with the sought-for rank of $H^2(S,\Z)^G$ may be found in \cite{Xi,Ha}. An example of a different kind may be found in \cite{OZ}. The minimal resolution $Y \to X$ then gives us an example of a contraction of relative Picard rank $19$. The induced contraction $\Hilb^n(Y) \to \Sym^n(Y) \to \Sym^n(X)$ gives an example of a contraction of a \Kthreen-type variety of dimension $2n$ with relative Picard rank $20$.
\end{remark}

\subsection{Marked moduli spaces -- the singular case}  

Recall that given a lattice $\Lambda$ with quadratic form $q$ of signature $(3,\rk(\Lambda)-3)$, we define an analytic coarse moduli space $\mathfrak{M}_{\Lambda}$ of $\Lambda$-marked irreducible symplectic manifolds by gluing together the Kuranishi spaces.  Likewise, we define $\mathfrak{M}^\mathrm{lt}_\Lambda$ to be the analytic space obtained by gluing together the locally trivially Kuranishi spaces of $\Lambda$-marked symplectic varieties admitting irreducible symplectic resolutions. By Theorem \ref{theorem deflt is smooth}, $\mathfrak{M}^\mathrm{lt}_\Lambda$ is a not-necessarily-Hausdorff complex manifold.

With the period domain $\Omega_\Lambda$ defined as in \eqref{eq hk period domain}, by the local Torelli theorem, see Corollary \ref{proposition local torelli}, there is a period map $P:\mathfrak{M}^{\mathrm{lt}}_\Lambda\to \Omega_\Lambda$ that is a local isomorphism.

Let $(X,\nu)$ be a $\Lambda$-marked symplectic variety admitting an irreducible symplectic resolution.  By the Proposition \ref{proposition nonproj deform} and \cite[Theorem 2.5]{Huy}, the deformation type of an irreducible symplectic resolution is constant along each connected component of $\mathfrak{M}^\mathrm{lt}_\Lambda$.  Given a lattice $\Lambda'$ with quadratic form $q'$ of signature $(3,\rk(\Lambda')-3)$ and a primitive embedding of lattices $\iota:\Lambda\into\Lambda'$, we define a \emph{compatibly marked irreducible symplectic resolution} $\pi:(Y,\mu)\to (X,\nu)$ to be an irreducible symplectic resolution $\pi$ and a commutative diagram

\[\xymatrix{
\Lambda'\ar[r]^{\mu}_\cong&H^2(Y,\Z)\\
\Lambda\ar[u]^\iota\ar[r]_\nu^\cong&H^2(X,\Z)\ar[u]_{\pi^*}
}\]
compatible with $q$ and $q'$. We define $\mathfrak{M}^\mathrm{res}_{\Lambda',\Lambda}$ to be the set of compatibly marked irreducible symplectic resolutions $(Y,\mu)\to(X,\nu)$ modulo the following equivalence relation: we identify $\pi:(Y,\mu) \to (X,\nu)$ with $\pi':(Y',\mu') \to (X',\nu')$ provided there is an isomorphism 
\[
\xymatrix{Y\ar[r]^\cong\ar[d]_{\pi}&Y'\ar[d]^{\pi'}\\
X \ar[r]_\cong&X'}
\]
 compatible with the markings.  There are obvious forgetful maps that fit into a diagram
 \begin{equation}\label{eq moduli spaces}
\xymatrix{
&\mathfrak{M}^\mathrm{lt}_\Lambda\ar[r]^P&\Omega_{\Lambda}\ar[dd]\\ 
\mathfrak{M}^\res_{\Lambda',\Lambda}\ar[ur]\ar[dr]&&\\
&\mathfrak{M}_{\Lambda'}\ar[r]_P&\Omega_{\Lambda'}\\
 }
\end{equation}
 where the vertical arrow is the embedding of $\Omega_\Lambda$ into $\Omega_{\Lambda'}$ as the Noether-Lefschetz locus $\Omega_{\Lambda'}\cap\P(\Lambda\otimes\C)$. The set $\mathfrak{M}^\res_{\Lambda',\Lambda}$ is given the structure of an analytic space by requiring the top diagonal map to be a local isomorphism, using Proposition \ref{prop defo}.

For the following proposition, let $\mathfrak{M}_{\Lambda',\Lambda}$ be the inverse image under $P$ of $\Omega_{\Lambda}\subset\Omega_{\Lambda'}$ in $\mathfrak{M}_{\Lambda'}$---that is, the locus where $\Lambda^\perp$ is Hodge.  Recall from the proof of Proposition~\ref{proposition nonproj deform} that for a component $\mathfrak{N}_{\Lambda'}\subset\mathfrak{M}_{\Lambda'}$ and for a choice of resolution chamber $\tau$ the subset $\mathfrak{N}^\tau_{\Lambda',\Lambda}$ consists of those $(Y,\mu)$ for which the K\"ahler cone of $Y$ is contained in $\tau$ and has $\Lambda_\R$ as a face.  We have $\mathfrak{N}^\tau_{\Lambda',\Lambda}\cong \Omega^\mathrm{cone}_{\Lambda',\Lambda}(\tau)$ in the notation of Section \ref{sect cone stuff}.

\begin{proposition}\label{proposition meta monodromy}Assume $\rk(\Lambda)>4$.  For each choice of component $\mathfrak{N}^\mathrm{res}_{\Lambda',\Lambda}$ of $\mathfrak{M}_{\Lambda',\Lambda}^\res$, the diagonal map from \eqref{eq moduli spaces} yields an isomorphism onto some $\mathfrak{N}^\tau_{\Lambda',\Lambda}\subset \mathfrak{M}_{\Lambda',\Lambda}$ (respectively some component $\mathfrak{N}_{\Lambda'}^\lt$ of $\mathfrak{M}_\Lambda^\lt$).  Moreover, each such $\mathfrak{N}^\tau_{\Lambda',\Lambda}$ (respectively $\mathfrak{N}_\Lambda^\lt$) arises as the image of some component $\mathfrak{N}^\mathrm{res}_{\Lambda',\Lambda}$ of $\mathfrak{M}_{\Lambda',\Lambda}^\res$.
\end{proposition}
\begin{proof}  Given the first claim, the second claim is obvious by Corollary \ref{corollary can contract} (respectively by taking a resolution).  Both diagonal maps are local isomorphisms, so for the first claim we need only show bijectivity.

For the bottom diagonal map, $\mathfrak{N}^\mathrm{res}_{\Lambda',\Lambda}$ lands in the union $\bigcup_\tau\mathfrak{N}^\tau_{\Lambda',\Lambda}$, and therefore in a single $\mathfrak{N}^\tau_{\Lambda',\Lambda}$.  Corollary \ref{corollary can contract} implies that any path in $\mathfrak{N}^\tau_{\Lambda',\Lambda}$ can be locally lifted to $\mathfrak{N}^\mathrm{res}_{\Lambda',\Lambda}$, as both are locally isomorphic to $\Omega_\Lambda$.  The lift of a point is unique if it exists (as a contraction is unique if it exists and the marking is determined), so paths lift globally, and it follows that $\mathfrak{N}^\mathrm{res}_{\Lambda',\Lambda}$ bijects onto $\mathfrak{N}^\tau_{\Lambda',\Lambda}$.

For the top diagonal map, suppose $\mathfrak{N}^\mathrm{res}_{\Lambda',\Lambda}$ lands in a component $\mathfrak{N}^\lt_{\Lambda}$.  Note that a very general point in $\mathfrak{M}^\lt_\Lambda$ has uniformly finitely many lifts to $\mathfrak{M}^\mathrm{res}_{\Lambda',\Lambda}$, as the K\"ahler cone decomposition of a resolution $Y$ has finitely many chambers and the marking extends in finitely many ways.  By Proposition \ref{proposition nonproj deform}, for any path $\gamma\subset\mathfrak{N}^\lt_{\Lambda}$ and for each very general point $x\in\gamma$ we can find an open neighborhood in which the path lifts through every lift of $x$.  As $\gamma$ can be covered by finitely many such neighborhoods, paths can be lifted globally, and $\mathfrak{N}^\mathrm{res}_{\Lambda',\Lambda}\to\mathfrak{N}^\lt_{\Lambda}$ is surjective.

The injectivity claim of the top diagonal map means the following:  if we have two resolutions $\pi:Y\to X$ and $\pi':Y'\to X$ together with a parallel transport operator $f:H^2(Y,\Z)\to H^2(Y',\Z)$ arising from a simultaneously resolved family connecting $\pi$ to $\pi'$ which is the identity on $H^2(X,\Z)$, then $f$ arises from an isomorphism $\phi:Y\to Y'$ such that $\pi'\circ\phi=\pi$.  By the statement of the proposition regarding the lower diagonal map, $f$ must send the resolution chamber $\tau$ of $\pi$ containing the K\"ahler cone of $Y$ to the resolution chamber $\tau'$ of $\pi'$ containing the K\"ahler cone of $Y'$.  Moreover, $f$ fixes a K\"ahler class on $X$ and therefore both the image of the nef cone of $Y$ and the nef cone of $Y'$ have the same intersection with $\pi'^*H^2(X,\R)$.  It follows from Proposition \ref{prop face map} that $f$ maps the K\"ahler cone of $Y$ to the K\"ahler cone of $Y'$ and the claim then follows.
\end{proof}

\begin{corollary}\label{cor birat}Assume $\rk(\Lambda)>4$.  If $(X,\nu)$ and $(X',\nu')$ are inseparable in $\mathfrak{M}^\lt_\Lambda$, then $X$ and $X'$ are birational.
\end{corollary}
\begin{proof}$(X,\nu)$ and $(X',\nu')$ admit marked resolutions that are inseperable in moduli, hence birational. 
\end{proof}

\subsection{Monodromy groups}
 
 Each of the above moduli spaces $\mathfrak{M}$ has a corresponding notion of parallel transport operator, which we call \emph{locally trivial parallel transport operators} for $\mathfrak{M}^\mathrm{lt}_\Lambda$ and \emph{simultaneously resolved parallel transport operators} for $\mathfrak{M}^\res_{\Lambda',\Lambda}$. Precisely, for $(X,\nu)$ and $(X',\nu')$ in the same component of $\mathfrak{M}^\lt_\Lambda$, we define $\nu'\circ\nu^{-1}:H^2(X,\Z)\to H^2(X',\Z)$ to be a locally trivial parallel transport operator, and likewise for the simultaneously resolved parallel transport operators.  A simultaneously resolved parallel transport operator from $\pi:Y\to X$ to $\pi':Y'\to X'$ yields a diagram
 \[\xymatrix{
 H^2(Y,\Z)\ar[r]^f&H^2(Y',\Z)\\
 H^2(X,\Z)\ar[u]^{\pi^*}\ar[r]_g&H^2(X',\Z)\ar[u]_{\pi'^*}
 }\]
 and $f,g$ are evidently locally trivial parallel transport operators.
 
 First note the following:
 
\begin{corollary} Assume $\rk(\Lambda)>4$.  Any locally trivial (respectively simultaneously resolved) parallel transport operator can be realized as parallel transport along a path in a locally trivial (respectively simultaneously resolved) family over a connected analytic base.
\end{corollary}
\begin{proof}It follows from the Proposition \ref{proposition meta monodromy} that a Picard rank zero point of $\mathfrak{M}^\lt_\Lambda$ (respectively $\mathfrak{M}^\mathrm{res}_{\Lambda',\Lambda}$) is a separated point (i.e. can be separated from any other point).  Thus, any path in $\mathfrak{M}^\lt_\Lambda$ (respectively $\mathfrak{M}^\mathrm{res}_{\Lambda',\Lambda}$) can be covered by finitely many open sets over which families exist, and these may be glued at very general points.
\end{proof}

 Proposition \ref{proposition meta monodromy} allows us to describe the relationship between these three notions of parallel transport operators.
 \begin{corollary}\label{cor parallel transport}Suppose $X$ admits an irreducible symplectic resolution and \mbox{$b_2(X)>4$}.
 \begin{enumerate}
  \item For each choice of irreducible symplectic resolution $\pi:Y\to X$, a locally trivial parallel transport operator $g:H^2(X,\Z)\to H^2(X',\Z)$ lifts uniquely to a simultaneously resolved parallel transport operator. 
 \item Let $\pi:Y\to X$ be an irreducible symplectic resolution.  A parallel transport operator $f:H^2(Y,\Z)\to H^2(Y',\Z)$ for which $f(N(Y/X))$ is Hodge extends to a simultaneously resolved parallel transport operator if and only if the image of the resolution K\"ahler cone of $Y$ contains the K\"ahler cone of $Y'$.

 \end{enumerate}
 \end{corollary}
 
Let us now turn to the associated monodromy groups.  We adopt the following notation:

\begin{enumerate}
 \item 
$\Mon^2(X)^\mathrm{lt}\subset\O(H^2(X,\Z))$ will be the image of the monodromy representation associated to locally trivial families. Here, the orthogonal group is taken with respect to the Beauville--Bogomolov--Fujiki form $q_X$ on $H^2(X,\C)$, see Remark~\ref{singular BBF}.  Likewise we define $\Mon^2(Y)$ if $Y$ is a smooth irreducible symplectic manifold.
 \item 
 Likewise for an irreducible symplectic resolution $\pi:Y\to X$ we define $\Mon^2(\pi)\subset \O(H^2(Y,\Z))\times\O(H^2(X,\Z))$ to be the monodromy group associated to simultaneously resolved families.  Note that we can in fact think of $\Mon^2(\pi)\subset \O(H^2(Y,\Z))$.
\end{enumerate}
 
 Let $\pi:Y\to X$ be an irreducible symplectic resolution.  The monodromy group $\Mon^2(Y)$ clearly acts on the resolution chambers of $\pi$.
\begin{corollary}Let $\pi:Y\to X$ be an irreducible symplectic resolution with $b_2(X)>4$.
\begin{enumerate}
\item $\Mon^2(\pi)\subset\Mon^2(Y)$ is the stabilizer of the resolution chamber of $\pi$ containing the K\"ahler cone of $Y$.
\item $\Mon^2(X)^\lt$ is the image of $\Mon^2(\pi)$ in $\O(H^2(X,\Z))$.  
\end{enumerate}
\end{corollary}      
 \begin{corollary}\label{cor finite index}  With the above setup, $\Mon^2(\pi)$ (respectively $\Mon^2(X)^\lt$) has finite index in $\O(H^2(Y,\Z))$ (respectively $\O(H^2(X,\Z))$).
 \end{corollary}
 
\subsection{Global Torelli theorem}
As noted after Lemma \ref{lemma mbm}, under the assumption on the second Betti number, an irreducible symplectic resolution determines an induced MBM collection. 
\begin{definition}\label{def mbm sing}Let $\pi:Y\to X$ be an irreducible symplectic resolution with $b_2(Y)>5$.  We define the $MBM$ classes of $X$ (with respect to $\pi$) as the $MBM$ collection of $H^2(X,\Z)$ induced by the $MBM$ classes of $H^2(Y,\Z)$ (via the embedding $\pi^*:H^2(X,\Z)\to H^2(Y,\Z)$) in the sense of Definition \ref{def induced mbm} (and using Lemma \ref{lemma mbm}).  We say a class $\alpha\in H^2(X,\Z)$ is $MBM_X$ if it is in the $MBM$ collection and Hodge.
\end{definition}
As in Section \ref{sect cone stuff}, the $MBM$ classes of $H^2(X,\Z)$ then define an induced wall-and-chamber decomposition of $C^{1,1}(X)^+$ whose open chambers are the connected components of 
\begin{equation}C^{1,1}(X)^+-\bigcup_{\alpha\in MBM_X}\alpha^\perp\label{eq induced cone}\end{equation}
which via $\pi^*$ is identified with
$$\pi^*C^{1,1}(X)^+-\bigcup_{\alpha'\in MBM_Y-N(Y/X)}\alpha'^\perp.$$  As in Definition \ref{definition kaehler type chambers} we refer to this as the K\"ahler-type wall-and-chamber decomposition.

\begin{proposition}\label{proposition torelli cone}Let $X$ be a symplectic variety admitting an irreducible symplectic resolution with $b_2(X)>4$.  The $MBM$ classes of $X$ do not depend on a choice of resolution and are locally trivial deformation invariant.  Moreover, for any component $\mathfrak{N}^\lt_\Lambda$ of the locally trivial marked moduli space of $X$, the natural map
\[P^\mathrm{cone}:\mathfrak{N}^\lt_\Lambda\to\Omega_\Lambda^\mathrm{cone}\]
by taking the K\"ahler-type chamber containing the K\"ahler cone is an isomorphism, where we define $\Omega_\Lambda^\mathrm{cone}$ as in Section \ref{sect cone stuff} with respect to the $MBM$ classes.\end{proposition}
Note that it is not clear that the K\"ahler cone of $X$ is an open chamber of the K\"ahler-type decomposition.
\begin{proof}
Picking an irreducible symplectic resolution $\pi:Y\to X$ and a resolution chamber $\tau$ of $\pi$ we obtain an identification $\mathfrak{N}^\tau_{\Lambda',\Lambda}\to\Omega^\mathrm{cone}_\Lambda$ as in the proof of Proposition \ref{proposition nonproj deform}, where $\Omega^\mathrm{cone}_\Lambda$ is defined using the $MBM$ classes of $X$ with respect to $\pi$.  From Proposition \ref{proposition meta monodromy} it then follows that $\mathfrak{N}^\lt_\Lambda\to\Omega_\Lambda^\mathrm{cone}$ is an isomorphism.  Noting that a class $\alpha\in \Lambda$ is $MBM$ (with respect to $\pi$) if and only if the fiber of $\Omega^\mathrm{cone}_\Lambda\to\Omega_\Lambda$ above a period with Picard group generated by $\alpha$ has exactly two points, it follows that the notion is independent of the resolution and locally trivial deformation invariant.
\end{proof}

We have now proved all the statements in our main Torelli theorem.

\begin{proof}[Proof of Theorem \ref{intro theorem torelli}]  Parts (1) and (2) follow from the more precise formulation in Proposition \ref{proposition torelli cone} and Corollary \ref{cor birat}.  Part (3) is Corollary \ref{cor finite index}.
\end{proof}

\begin{remark}\label{remark ghs}
The Hausdorff reduction of the moduli space of quasi-polarized locally trivial deformations of a given projective symplectic variety admitting an irreducible symplectic resolution is thus a locally symmetric variety of orthogonal type. Therefore, Mumford's theory of toroidal compactifications applies and enables one to use methods from algebraic geometry. The geometry of such varieties, and in particular their Kodaira dimensions, have been studied using modular forms by Gritsenko--Hulek--Sankaran in a series of papers, see for example \cite{GHS,GHS2,GHS3}. A rather general recent result in this direction has been obtained by Ma \cite{Ma17}. The singularities of compactifications of such moduli spaces have studied by Giovenzana in \cite{Luca}.
\end{remark}

\subsection{On Namikawa's result}
We conclude this section by generalizing the result of Namikawa \cite[Theorem (2.2)]{Na01}, used in the proof of Proposition \ref{proposition deformation remains isv}.
  
 \begin{proposition}\label{prop namikawa general}Let $\pi:Y\to X$ be an irreducible symplectic resolution with $b_2(X)>4$.  Then $\Def(X)$ is smooth of the same dimension as $\Def(Y)$ and the induced map $p:\Def(Y)\to\Def(X)$ is finite.
  \end{proposition}
  \begin{proof}
  Consider the analytic locus $Z\subset\Def^{\lt}(X)$ where the dimension of the tangent space $\dim\Ext^1(\Omega^1_{\mathscr{X}_t},\mathcal{O}_{\mathscr{X}_t})$ to the full Kuranishi space $\Def(\scrX_t)$ is nongeneric, that is, stirctly greater than $\dim H^{1,1}(Y)$.  The projective points in $\Def^\lt(X)$ (and in every subvariety) are dense by Corollary \ref{corollary projective new}, so by \cite[Theorem (2.2)]{Na01} we have that $Z$ is at most the distinguished point $0\in\Def^\lt(X)$ (possibly after shrinking $\Def^\lt(X)$).  Now we may assume $X$ is not projective (and in particular not of maximal Picard rank), so for any open neighborhood $0\in U\subset\Def^\lt(X)$, by Proposition \ref{proposition verbitsky}, Corollary \ref{corollary lift density}, and Proposition \ref{proposition torelli cone} there is another point $0\neq t\in U$ corresponding to an isomorphic variety $\mathscr{X}_t\cong X$, and thus $\dim\Ext^1(\Omega^1_{{X}},\mathcal{O}_{{X}})=\dim H^{1,1}(Y)$.

We now claim $p:\Def(Y)\to \Def(X)$ has no positive-dimensional fibers.  If it did, then the projective points would be dense in such a fiber by \cite[Proposition 26.6]{GHJ}, and by the same argument as in Corollary  \ref{corollary projective new} therefore $X$ would be projective, contradicting Namikawa's theorem \cite[Theorem (2.2)]{Na01}. Thus, $$\dim H^{1,1}(Y)  = \dim\Def(Y) \leq \dim \Def(X) \leq \dim\Ext^1(\Omega^1_{{X}},\mathcal{O}_{{X}})$$ and by the above we have equality everywhere. This proves that $\Def(X)$  must be smooth at $0$ as well. The finiteness claim follows from quasi-finiteness upon shrinking the representative $\Def(X)$ as in Namikawa's proof from \cite[3.2 Lemma, p. 132]{Fi87}. 
\end{proof}

\section{Applications to \texorpdfstring{\Kthreen}{K3\textasciicircum {[n]}}-type manifolds}\label{sec k3type}
Recall that a compact K\"ahler manifold $Y$ is said to be of \Kthreen-type if it is deformation equivalent to a Hilbert scheme of $n$ points on a \Kthree surface.  The \Kthreen-type manifolds form one of the two known infinite families of irreducible symplectic manifolds.  We assume throughout that $n\geq 2$ (i.e. that $\dim Y\geq 4$).   

By work of Markman \cite[Corollary 9.5]{markmantor} there is a canonical extension of weight 2 integral Hodge structures
\begin{equation}
0\to H^2(Y,\Z)\to \tilde \Lambda (Y,\Z)\to Q\to 0\label{extperiod}
\end{equation}
where $Q\cong \Z(-1)$.  The lattice underlying $\tilde\Lambda(Y,\Z)$ is the Mukai lattice $\tilde\Lambda_{K3}=E_8(-1)^2\oplus U^4$.  We denote the primitive generator of the orthogonal to $H^2(Y,\Z)$ in $\tilde\Lambda(Y,\Z)$ by $v=v(Y)$, which is determined up to sign and satisfies $v^2=2-2n$.  Note that
\[H^2(Y,\Z)\cong \Lambda_{K3^{[n]}}:=E_8(-1)^2\oplus U^3\oplus (2-2n).\]

Denote by $\Mon^2(K3^{[n]})\subset O(\Lambda_{K3^{[n]}})$ the image of the weight two monodromy representation, which has been computed by Markman \cite{markmanmon} to be the subgroup $\tilde O^+(\Lambda_{K3^{[n]}})$ preserving the orientation class and acting as $\pm 1$ on the discriminant group $D(\Lambda_{K3^{[n]}}):=\Lambda_{K3^{[n]}}^*/\Lambda_{K3^{[n]}}$.  We have the following well-known consequence of this computation and Verbitsky's global Torelli theorem:  the extension \eqref{extperiod} determines the birational class of $Y$.  More precisely, for two symplectic manifolds $Y,Y'$, there is a Hodge isometry $\phi:H^2(Y,\Z)\to H^2(Y',\Z)$ lifting to a Hodge isometry $\tilde\phi:\tilde\Lambda(Y,\Z)\to\tilde\Lambda(Y',\Z)$ of the Markman Hodge structures if and only if $Y$ is birational to $Y'$ \cite[Corollary 9.8]{markmantor}.  We therefore refer to \eqref{extperiod} as the \emph{extended period}.

\subsection{Bridgeland stability conditions}\label{subsec bridgeland stability}

Recall that for a \Kthree surface $S$, the total cohomology $H^*(S,\Z)$ carries the so-called Mukai Hodge structure
\[\tilde H(S,\Z):= H^0(S,\Z)(-1)\oplus H^2(S,\Z)\oplus H^4(S,\Z)(1)\]
which comes equipped with the Mukai pairing defined by
\[(a_0+a_2+a_4,b_0+b_2+b_4):=(a_2,b_2)_S - a_0b_4 - a_4b_0
\]
for $a_i, b_i \in H^i(S,\Z)$.  The Bridgeland stability condition and moduli space formalism we discuss below will also work in the larger category of twisted \Kthree surfaces.  Recall that the cohomological Brauer group $\Br(S)$ of a \Kthree surface $S$ can defined as the torsion part of the cohomology group $H^2(S,\sO_S^*)$ in the analytic topology and is naturally the image of $H^2(S,\Q/\Z)$ under the exponential map $e(-)=e^{2\pi i-}$.  A twisted \Kthree surface $(S,\alpha)$ in the sense of \cite[\S 1]{HS} is a \Kthree surface together with a Brauer class $\alpha\in \Br(S)$ and a choice of $\beta\in H^2(S,\Q)$ with $e(\beta)=\alpha$, which exists for any $\alpha$ since $H^3(S,\Z)=0$.  A twisted \Kthree surface $(S,\alpha)$ likewise has a Mukai Hodge structure $\tilde H(S,\alpha,\Z)$ with underlying lattice $H^2(S,\Z)$, and we say $(S,\alpha)$ is projective if $S$ is.  We refer to \cite[\S2]{BM} and the references therein for more details and for the theory of $\alpha$-twisted sheaves and Bridgeland stability conditions on twisted \Kthree surfaces.

Throughout the following we will only consider primitive Mukai vectors $v\in \tilde H(S,\alpha,\Z)$.  By work of Bayer--Macr\`i \cite[Theorem 1.3]{BM1}, for a generic Bridgeland stability condition $\sigma$ on $(S,\alpha)$, the moduli space $Y=M_\sigma(v)$ of Bridgeland $\sigma$-stable objects on $(S,\alpha)$ of a Mukai vector $v\in \tilde H(S,\alpha,\Z)_{\textrm{alg}}$ is a projective \Kthreen-type manifold, and we canonically have  $\tilde\Lambda(Y,\Z)=\tilde H(S,\alpha,\Z) $ with $v(Y)=v$.  The identification $v^\perp\xrightarrow{\cong} H^2(Y,\Z)$ is achieved by the Fourier--Mukai transform.

Note that by \cite[Theorem 1.2(c)]{BM}, every symplectic birational model of a Bridgeland moduli space is a Bridgeland moduli space. We will need below the following Hodge-theoretic characterization of Bridgeland moduli spaces which follows from \cite[Lemma 2.5]{Hu15} and \cite[Proposition~4]{Addington}.  
\begin{proposition}\label{proposition criterion}
A projective \Kthreen-type manifold $Y$ is isomorphic to a Bridgeland moduli space on a projective twisted \Kthree surface if and only if one of the following equivalent conditions holds:
\begin{enumerate}
\item $\tilde\Lambda(Y,\Q)_{\mathrm{alg}}$ contains $U$ as a sublattice;
\item The rational transcendental lattice $H^2(Y,\Q)_{\mathrm{tr}}\cong \tilde\Lambda(Y,\Q)_{\mathrm{tr}}$ is Hodge-isometric to the rational transcendental lattice of a projective \Kthree surface.
\end{enumerate}
Furthermore, the projective twisted \Kthree surface can be taken untwisted if and only if one of the following equivalent conditions holds: 
\begin{enumerate}
\item[$(1')$] $\tilde\Lambda(Y,\Z)_{\mathrm{alg}}$ contains $U$ as a primitive sublattice;
\item[$(2')$] The transcendental lattice $H^2(Y,\Z)_{\mathrm{tr}}\cong \tilde\Lambda(Y,\Z)_{\mathrm{tr}}$ is Hodge-isometric to the transcendental lattice of a projective \Kthree surface.
\end{enumerate}
\end{proposition}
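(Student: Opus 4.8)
The plan is to treat the two equivalences $(1)\Leftrightarrow(2)$ and $(1')\Leftrightarrow(2')$ as purely lattice-theoretic statements, and then to match either column with the geometric condition of being a Bridgeland moduli space, invoking \cite[Lemma 2.5]{Hu15} and \cite[Proposition 4]{Addington} at the decisive steps. Throughout I would use that $v=v(Y)$ is algebraic, so that the orthogonal decomposition $\tilde\Lambda(Y,\Q)=\tilde\Lambda(Y,\Q)_{\textrm{alg}}\oplus\tilde\Lambda(Y,\Q)_{\textrm{tr}}$ restricts to the identification $\tilde\Lambda(Y,\Q)_{\textrm{tr}}=H^2(Y,\Q)_{\textrm{tr}}$ coming from $v^\perp=H^2(Y,\Z)$. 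One implication of the main equivalence is then immediate from the setup recalled above: if $Y\cong M_\sigma(v)$ on a projective K3 surface $S$ (twisted or not) then $\tilde\Lambda(Y,\Z)\cong\tilde H(S,\Z)$ carries the summand $H^0(S)\oplus H^4(S)\cong U$ inside its algebraic part, primitively over $\Z$ in the untwisted case and only rationally after a $B$-field twist in the twisted case, giving $(1')$ respectively $(1)$.

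For $(2)\Rightarrow(1)$ I would argue that if $\tilde\Lambda(Y,\Q)_{\textrm{tr}}$ is Hodge-isometric to the rational transcendental lattice $T_{S,\Q}$ of a projective K3 surface $S$, then, since the rational Mukai lattice of $S$ is abstractly isometric to $\tilde\Lambda(Y,\Q)$, Witt's theorem extends the transcendental Hodge isometry to a rational isometry $\tilde\Lambda(Y,\Q)\cong\tilde H(S,\Q)$; as the algebraic complements carry the trivial (type $(1,1)$) Hodge structure, this extension is automatically a Hodge isometry, and it sends $\tilde\Lambda(Y,\Q)_{\textrm{alg}}$ onto $\tilde H(S,\Q)_{\textrm{alg}}\supset U$. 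For the reverse $(1)\Rightarrow(2)$, an embedding $U\subset\tilde\Lambda(Y,\Q)_{\textrm{alg}}$ forces $\tilde\Lambda(Y,\Q)_{\textrm{tr}}\subset U^{\perp}\cong\Lambda_{K3}\otimes\Q$, and the induced K3-type Hodge structure on $U^\perp$, whose transcendental part is $\tilde\Lambda(Y,\Q)_{\textrm{tr}}$, is that of a projective K3 surface by surjectivity of the period map together with the positivity inherited from the projectivity of $Y$. The integral statement $(1')\Leftrightarrow(2')$ follows the same pattern with Witt's theorem replaced by Nikulin's theory of primitive embeddings \cite{Ni} and with the integral period map, the word \emph{primitively} in $(1')$ corresponding exactly to the Brauer-trivial case. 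Both equivalences are the content of \cite[Lemma 2.5]{Hu15}.

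It then remains to match condition $(1')$ (resp. $(1)$) with $Y$ being a Bridgeland moduli space on an untwisted (resp. twisted) projective K3 surface. Given a primitive $U\subset\tilde\Lambda(Y,\Z)_{\textrm{alg}}$, unimodularity makes it an orthogonal direct summand, so $U^\perp\cong\Lambda_{K3}$ inherits a K3-type Hodge structure; surjectivity of the period map produces a K3 surface $S$ with $H^2(S,\Z)\cong U^\perp$, whence a Hodge isometry $\tilde H(S,\Z)\cong\tilde\Lambda(Y,\Z)$ sending $v=v(Y)$ to a primitive algebraic Mukai vector with $v^2=2n-2>0$; the signature $(2,\ast)$ of $\tilde\Lambda(Y,\Q)_{\textrm{alg}}$ forced by the projectivity of $Y$ guarantees that $S$ is projective. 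By \cite{BM} a generic stability condition $\sigma$ then yields a projective \Kthreen-type moduli space $M_\sigma(v)$ with the same extended period as $Y$; by the extended global Torelli theorem recalled above, $Y$ and $M_\sigma(v)$ are birational, and since every symplectic birational model of a Bridgeland moduli space is again such a space, $Y$ is itself a Bridgeland moduli space. The twisted case, starting from $(1)$, runs in parallel over $\Q$ via a $B$-field and the twisted Mukai lattice; this is \cite[Proposition 4]{Addington}.

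The main obstacle is the careful bookkeeping of integral versus rational structures. The entire distinction between $(1)$ and $(1')$---whether the hyperbolic plane embeds only rationally or primitively over $\Z$---is precisely the distinction between a twisted and an untwisted K3 surface, and pinning it down requires Nikulin's discriminant-form calculus and the $B$-field formalism rather than a naive application of Witt's theorem. A secondary point demanding care is checking that the K3 surface produced by the period map is genuinely projective, so that the Bayer--Macr\`i machinery applies; this is exactly where the positivity encoded in the projectivity of $Y$ is needed.
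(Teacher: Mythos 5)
Your proposal is correct and takes essentially the same route as the paper: the paper in fact gives no written proof at all, deriving the proposition directly from \cite[Lemma 2.5]{Hu15} and \cite[Proposition~4]{Addington} together with the Bayer--Macr\`i fact that every symplectic birational model of a Bridgeland moduli space is again one -- precisely the ingredients to which your argument reduces. Your reconstruction of what those citations supply (the Witt/Nikulin extension of the transcendental Hodge isometry, surjectivity of the K3 period map plus the positivity forced by projectivity of $Y$, and Markman's extended Torelli statement) is sound.
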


\subsection{\Kthreen-type contractions}
We now turn to the singular case.
\begin{definition}Let $X$ be a symplectic variety and $\pi:Y\to X$ a irreducible symplectic resolution.  We say $\pi$ is a \Kthreen-type contraction if in addition $Y$ is a \Kthreen-type manifold.  We will often abuse terminology and refer to $X$ itself as a \Kthreen-type contraction as well.
\end{definition}
Note that if $X$ is a \Kthreen-type contraction, \emph{every} (K\"ahler) symplectic resolution is a \Kthreen-type manifold by Huybrechts' theorem \cite[Theorem 2.5]{Huy}.
\begin{example}Our main source of examples of \Kthreen-type contractions come from contractions of Bridgeland moduli spaces, which we call Bridgeland contractions.  Bridgeland moduli spaces of untwisted $K3$ surfaces will be called untwisted Bridgeland contractions for emphasis.  Their geometry is beautifully described via wall-crossing by Bayer--Macr\`i theory \cite{BM}. Given a projective twisted $K3$ surface $(S,\alpha)$, a primitive Mukai vector $v\in \tilde H(S,\alpha,\Z)_{\mathrm{alg}}$, and an open chamber $\mathcal{C}\subset\Stab^\dagger(S)$ associated to $v$, then by \cite[Theorem 1.4(a)]{BM1} any $\sigma_0\in\partial \mathcal{C}$ yields a semiample class $\ell_{\sigma_0}$ on $M_\mathcal{C}(v)$, and the associated morphism $\pi:M_{\mathcal{C}}(v)\to M$ is a \Kthreen-type contraction.  Conversely, by \cite[Theorem 1.2(c)]{BM} any contraction arises from this construction.  By \cite[Theorem 1.1]{BM1} the morphism $\pi$ contracts a curve if and only if two generic stable objects in the corresponding family are $S$-equivalent with respect to $\sigma_0$.
\end{example}
Much is known about the singularities of Bridgeland contractions, and Theorem \ref{theorem huybrechts} roughly says that arbitrary \Kthreen-type contractions exhibit no new singularities:
\begin{proposition}\label{proposition def to bridge}Any \Kthreen-type contraction is locally trivially deformation-equivalent to a Bridgeland contraction.  Furthermore, if $b_2(X)>4$, it is locally trivially deformation-equivalent to an untwisted Bridgeland contraction.
\end{proposition}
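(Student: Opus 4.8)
The plan is to reduce the statement to a lattice realization problem and solve it using surjectivity of the period map together with the lattice-theoretic characterization of Bridgeland moduli spaces recalled above. First I would reduce to the projective case: by Corollary \ref{corollary projective} a small locally trivial deformation makes $X$ and its resolution $\pi\colon Y\to X$ projective without changing the locally trivial deformation class, so it suffices to exhibit a Bridgeland contraction inside the connected component $\mathfrak{N}_\Lambda^\lt$ containing the marked $X$, where $\Lambda=H^2(X,\Z)$ and $\rk\Lambda=b_2(X)>3$. Since $Y$ is of \Kthreen-type we have $b_2(Y)=23\neq 5$, so property $\Finite(X)$ holds automatically, and by Corollary \ref{cor mon det} the locally trivial deformation type of $X$ is uniquely determined by the $\Mon^2(Y)$-orbit of $N:=N_1(Y/X)$. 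Moreover, by Theorem \ref{theorem almost surjectivity} (applicable since $\rk\Lambda>3$ and $b_2(Y)>5$) the period map $\mathfrak{N}_\Lambda^\lt\to\Omega_\Lambda$ is surjective. It therefore suffices to produce a single period $\omega\in\Omega_\Lambda$ whose realization in $\mathfrak{N}_\Lambda^\lt$ is a Bridgeland contraction.

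I would construct $\omega$ following the proof of Theorem \ref{theorem almost surjectivity}. Using that $N$ is rationally generated by MBM classes (Lemma \ref{MBM}), I would choose a marked resolution $(Y_\omega,\mu)$ in its birational class so that $N^\perp=\mu^{-1}(\Lambda_\R)$ is a face of the ample cone of $Y_\omega$; the basepoint-free theorem then produces a birational contraction $\pi_\omega\colon Y_\omega\to X_\omega$ contracting exactly the curves spanning $N$, with $X_\omega\in\mathfrak{N}_\Lambda^\lt$. The extra requirement is that $\omega$ be chosen so that $Y_\omega$ is itself a Bridgeland moduli space. By the characterization recalled above (from \cite{Hu15,Addington}), this means arranging that the algebraic Mukai lattice $\tilde\Lambda(Y_\omega,\Q)_{\mathrm{alg}}$ contains a copy of the hyperbolic plane $U$ in the twisted case, respectively that $\tilde\Lambda(Y_\omega,\Z)_{\mathrm{alg}}$ contains $U$ as a primitive sublattice in the untwisted case, equivalently that $\tilde\Lambda(Y_\omega,\Z)_{\mathrm{tr}}$ is Hodge-isometric to the transcendental lattice of a projective \Kthree surface.

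This last step is the technical heart, and it is where the numerical hypotheses enter. Since the Mukai vector $v=v(Y)$ is orthogonal to $H^2(Y_\omega,\Z)$ and $N\subset H^2(Y_\omega,\Z)$ is negative definite of rank $m=23-b_2(X)$, raising the Picard rank of $X_\omega$ within $\mathfrak{N}_\Lambda^\lt$ enlarges $\tilde\Lambda_{\mathrm{alg}}$ and shrinks $\tilde\Lambda_{\mathrm{tr}}$. I expect to prove, via Nikulin's primitive embedding theorems \cite{Ni}, that a period with the required hyperbolic plane exists exactly in the stated range: a \emph{rational} $U$ can be produced once $b_2(X)>3$, while splitting off a \emph{primitive integral} $U$ --- equivalently realizing $\tilde\Lambda_{\mathrm{tr}}$ as the transcendental lattice of an untwisted projective \Kthree surface --- needs one further independent direction, i.e.\ $b_2(X)>4$.

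The main obstacle is precisely to guarantee such an $\omega$ \emph{together with} the face-of-the-ample-cone condition producing the contraction of exactly $N$, so that the two constraints on the algebraic lattice are simultaneously satisfiable. Granting this, $Y_\omega$ is a (twisted, resp.\ untwisted) Bridgeland moduli space, and since $\pi_\omega$ is given by a semiample class, Bayer--Macr\`i theory \cite{BM} identifies $X_\omega$ with a (twisted, resp.\ untwisted) Bridgeland contraction. As $X_\omega$ lies in the component $\mathfrak{N}_\Lambda^\lt$ of $X$, it is locally trivially deformation-equivalent to $X$, proving the proposition with $M:=X_\omega$.
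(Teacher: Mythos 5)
Your skeleton is the right one and in outline matches the paper's proof: realize, inside the component $\mathfrak{N}^\lt_\Lambda$ containing $X$, a point whose resolution satisfies the Huybrechts--Addington lattice criterion, then invoke Bayer--Macr\`i to convert the resulting contraction into a Bridgeland contraction (citing Theorem \ref{theorem almost surjectivity} instead of the paper's density-plus-openness of the period image is a legitimate variant). But there is a genuine gap exactly where the proposition has its mathematical content: you never prove that a period $\omega\in\Omega_\Lambda$ exists whose induced Mukai--Hodge structure has algebraic part containing a hyperbolic plane $U$ (rationally, resp.\ as a primitive sublattice), compatibly with lying in the given component. You defer this to ``Nikulin's primitive embedding theorems'' and call it ``the main obstacle''; since every other step of your argument is formal, this deferred step \emph{is} the proposition, and as written nothing is established. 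Your guess at the mechanism is also off: in the paper, $b_2(X)>4$ is not an obstruction-theoretic condition for splitting off an integral $U$, but is what makes a density argument run. A smaller but real secondary gap: the Huybrechts--Addington criterion applies to \emph{projective} \Kthreen-type manifolds, so the chosen period must also carry a positive algebraic class in $\Lambda$; your initial reduction makes $X$ projective but says nothing about the new point $X_\omega$.

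Here is how the paper fills the hole, which also shows your ``simultaneous satisfiability'' worry is a non-issue. For the untwisted case ($b_2(X)>4$): choose a primitive hyperbolic plane $U\subset\tilde\Lambda_{K3}$ containing the generator $v$ of $\Lambda_{K3^{[n]}}^\perp$; then $U\cap\Lambda$ is negative definite of rank at most one, every period orthogonal to $U\cap\Lambda$ makes $U$ algebraic and primitive in $\tilde\Lambda_{\mathrm{alg}}$, and by Remark \ref{remark verbitsky} the monodromy translates of this Noether--Lefschetz locus are dense in $\Omega_\Lambda$ precisely because $\rk\left(\Lambda\cap U^\perp\right)\geq 4$ --- this inequality is where $b_2(X)>4$ enters. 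For the twisted case ($b_2(X)>3$): having settled $b_2(X)>4$, only $\rk\Lambda=4$ remains, so $\Pi:=\Lambda^\perp\subset\tilde\Lambda_{K3}$ is indefinite of rank $20\geq 5$ and contains an isotropic vector by Meyer's theorem; hence one finds $U$ \emph{rationally inside} $\Pi_\Q$. Then $U\perp\Lambda$, so ``$U$ algebraic'' holds for \emph{every} $\omega\in\Omega_\Lambda$ and interacts with nothing. In neither case does one construct the contraction by hand via MBM classes and the basepoint-free theorem: any point of $\mathfrak{N}^\lt_\Lambda$ over $\omega$ automatically comes with a \Kthreen-type symplectic resolution contracting the prescribed lattice $N$ (this is what membership in the component means, by Proposition \ref{prop defo}), and Bayer--Macr\`i theory turns any contraction of a Bridgeland moduli space into a Bridgeland contraction. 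So the two constraints you feared might conflict never need to be imposed simultaneously; the Bridgeland condition is a condition on the period alone (vacuous in the twisted case), and the contraction structure comes for free.
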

\begin{proof}  Fix an identification $\Lambda_{K3^{[n]}}=H^2(Y,\Z)$, and let $\Lambda=\pi^*H^2(X,\Z)$.  If we choose an arbitrary primitive embedding $U\subset\tilde\Lambda_{K3}$ containing $\Lambda_{K3^{[n]}}^\perp$, then $U\cap\Lambda$ is of rank at most one and negative definite.  Assuming $b_2(X)>4$, then there is a rational-rank-zero projective period $\omega\in U^\perp\cap \Omega_\Lambda$, and we can find a point of its orbit under $\Gamma\subset\O(\Lambda)$ arbitrarily close to the period of $X$ by Proposition \ref{proposition verbitsky}.  Here we take $\Gamma$ to be the finite-index subgroup of isometries extending to $\tilde\Lambda_{K3}$ and stabilizing $\Lambda_{K3^{[n]}}$ (and $\Lambda$).  Thus, $\pi$ has a (small) locally trivial deformation to a contraction $\pi':Y'\to X'$ where $Y'$ is an untwisted Bridgeland moduli space, by Proposition \ref{proposition criterion}.  $\pi'$ is necessarily a Bridgeland contraction since every contraction of a Bridgeland moduli space is a Bridgeland contraction.  

If $b_2(X)=3$ or $4$, let $\Pi $ be the orthogonal to $\Lambda$ in $\tilde\Lambda_{K3}$.  As $\rk(\Pi)>4$, we can find $U\subset \Pi_\Q$ since $\Pi$ has (many) isotropic vectors by a classical theorem of Meyer, and the claim again follows from Corollary \ref{corollary projective new}.
\end{proof}
The work of Bayer-Macr\`i \cite{BM} in principle provides a complete description of the singularities of Bridgeland contractions as they are all realized by wall-crossing.  For the following we refer specifically to \cite[\S 5]{BM}.  A Bridgeland stability condition $\sigma_0$ on a projective twisted \Kthree surface $(S,\alpha)$ comes with a central charge $Z_0:\tilde H(S,\alpha,\Z)\to \C$, and we denote by $\hyplat_{\sigma_0}(v)\subset \tilde H(S,\alpha,\Z)_{\mathrm{alg}}$ the primitive sublattice of Hodge vectors $a$ such that $\Im \frac{Z(a)}{Z(v)}=0$, \emph{i.e.}, those vectors for which $Z(a)$ and $Z(v)$ are $\R$-linearly dependent.  For a generic $\sigma_0$ on a wall of the Bridgeland stability space, $\hyplat_{\sigma_0}(v)$ is of signature $(1,1)$ and 
\[
N_{\sigma_0}(v):= \hyplat_{\sigma_0}(v)\cap H^2(M_{\sigma}(v),\Z)=\hyplat_{\sigma_0}(v)\cap v^\perp
\]
 is generated by a vector of negative square.  A nearby generic stability condition $\sigma$ yields a relative Picard rank one contraction $\pi:M_\sigma(v)\to M$ which identifies $\sigma$-stable sheaves which are $S$-equivalent with respect to $\sigma_0$.  Denoting by $R_{\sigma_0}(v)\subset H_2(M_\sigma(v),\Z)$ the primitive sublattice corresponding to $N_{\sigma_0}(v)$ under the isomorphism $H^2(M_\sigma(v),\Q)\cong H_2(M_\sigma(v),\Q)$ given by the Mukai pairing, we see that $R_{\sigma_0}(v)$ is identified with $N_1(M_\sigma(v)/M)$, and $N_{\sigma_0}(v)_ \Q$ with the orthogonal to $\pi^*H^2(M,\Q)$ in $H^2(M_{\sigma}(v),\Q)$ (which we have called simply $N_\Q$ above).

For an arbitrary \Kthreen-type contraction $\pi:Y\to X$, we likewise denote by $N$ and $\mathcal{H}$ the orthogonals to $\pi^*H^2(X,\Z)$ in $H^2(Y,\Z)$ and $\tilde\Lambda(Y,\Z)$, respectively.  Relative Picard rank one contractions are particularly easy to analyze:
\begin{lemma}  \label{cor monodromy class}Let $\pi:Y\to X$ be a relative Picard rank one \Kthreen-type contraction.  Then the locally trivial deformation type of $X$ is uniquely determined by each of the following:
\begin{enumerate}
\item the abstract isomorphism class of $(\mathcal{H},v)$ as a pointed lattice;
\item $q_Y(\lambda)$ and $\div(\lambda):=[\Z:q_Y(\lambda,H^2(Y,\Z))]$, where $\lambda\in N$ is a primitive generator.
\end{enumerate}  
\end{lemma}
\begin{proof}  
Suppose $\pi':Y'\to X'$ is a second \Kthreen-type contraction with associated lattices $N'$ and $\mathcal{H}'$.  By {Corollary \ref{cor parallel transport},} $X$ and $X'$ are locally trivially deformation equivalent if and only if there is a parallel transport operator $f:H^2(Y,\Z)\to H^2(Y',\Z)$ sending $N$ to $N'$, which in this case is equivalent to there being an isometry $\tilde f:\tilde\Lambda(Y,\Z)\to\tilde\Lambda(Y',\Z)$ sending $(\mathcal{H},v)$ to $(\mathcal{H}',v')$.  Such a $\tilde f$ exists if and only if $(\mathcal{H},v)\cong (\mathcal{H}',v')$ by \cite[Corollary 1.5.2]{Ni}.  This proves (1).

For (2), $\Mon^2(K3^{[n]})$ orbits of primitive vectors $\lambda$ are uniquely determined by $\lambda^2$ and $\div(\lambda)$, see \cite[Section 10]{Eichler}.
\end{proof}

\begin{proposition}\label{nakajima}  Let $X$ be a relative Picard rank one \Kthreen-type contraction and $x\in X$ a (closed) point.  The analytic germ $(X,x)$ is isomorphic to that of a Nakajima quiver variety.
\end{proposition}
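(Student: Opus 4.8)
The plan is to reduce to the case of a Bridgeland contraction, where the singularities can be analyzed by wall-crossing, and then to transport the result back by local triviality.

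First I would record that the hypothesis $\rk\,N_1(Y/X)=1$ forces $b_2(X)>3$. Indeed, a \Kthreen-type manifold has $b_2(Y)=23$, and since $h^{2,0}(X)=h^{0,2}(X)=1$, Theorem \ref{theorem deflt is smooth} gives $b_2(X)=2+h^{1,1}(X)=2+h^{1,1}(Y)-m=b_2(Y)-m=22$, where $m=\rk\,N_1(Y/X)=1$. Hence Proposition \ref{proposition def to bridge} applies and $X$ is locally trivially deformation-equivalent to a Bridgeland contraction $M$. Since the sublattice $N=(\pi^*H^2(X,\Z))^\perp$ varies in a local system along locally trivial families (Proposition \ref{prop defo}), its rank is a deformation invariant, so $M$ again has relative Picard rank one.

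Next I would exploit that a locally trivial deformation preserves, by its very definition, the analytic isomorphism type of the germ at every point: locally near each point such a family is a trivial product with the base, so the germ cannot jump, and Corollary \ref{corollary from kaledin} confirms the shape of these germs along the smooth locus of the singular set. Consequently the germs occurring on $X$ and on $M$ coincide, and for each closed point $x\in X$ there is a closed point $m\in M$ with $(X,x)\cong(M,m)$. It therefore suffices to prove the proposition for the relative Picard rank one Bridgeland contraction $M=M_{\mathcal C}(v)\to\overline M$.

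Finally, for such an $M$ I would invoke the Bayer--Macr\`i description of the defining wall. Here the relevant lattice $\hyplat_{\sigma_0}(v)$ has signature $(1,1)$ and $N_{\sigma_0}(v)=\hyplat_{\sigma_0}(v)\cap v^\perp$ is negative definite of rank one. A point $m\in\overline M$ corresponds to a $\sigma_0$-polystable object, and the analytic-local model of $M$ at $m$ is the moduli space of representations of the Ext-quiver of its stable factors, equipped with its natural hyperk\"ahler reduction structure, i.e. a Nakajima quiver variety. This is precisely the local computation performed by Arbarello--Sacc\`a \cite{AS}, and the relative Picard rank one hypothesis keeps the underlying quiver data elementary. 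Identifying this local model as a Nakajima quiver variety germ, and passing from the formal to the analytic category via Artin's approximation \cite{Ar68} exactly as in the proof of Corollary \ref{corollary from kaledin}, completes the argument. The main obstacle is this last step: extracting from Bayer--Macr\`i's global wall-crossing picture \cite{BM} a clean analytic-local identification of the contracted fibers with Nakajima quiver varieties that holds at \emph{every} point $m\in\overline M$, and not merely at the generic point of the exceptional locus where the transversal singularity is a single ADE type.
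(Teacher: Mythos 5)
Your reduction to a Bridgeland contraction (via Proposition \ref{proposition def to bridge}, which indeed applies since $b_2(X)=22>3$) and your observation that locally trivial deformations preserve analytic germs are both sound, and the paper implicitly relies on the same germ-transport principle. The gap is exactly the step you flag as ``the main obstacle,'' and it is a genuine one rather than a technicality: Arbarello--Sacc\`a \cite[Theorem 1.1]{AS} does \emph{not} establish a quiver-variety local model for arbitrary Bridgeland moduli spaces $M_{\mathcal{C}}(v)$ at arbitrary points of the contracted image; their theorem concerns Gieseker moduli spaces $M_{H_0}(v)$ of pure one-dimensional sheaves on a $K3$ surface (compactified Jacobians) with a nongeneric polarization $H_0$. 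Nor does \cite{BM} supply an Ext-quiver/hyperk\"ahler-reduction description of the germs of general wall-crossing contractions: that reference controls which curves are contracted, the lattice $\hyplat_{\sigma_0}(v)$, and the birational geometry, but not the analytic-local structure at every point of the target. So your final step quotes, for Bridgeland contractions, a statement essentially equivalent to the proposition being proved, and it is not in the literature you cite.

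The paper closes this gap by a different, lattice-theoretic route which you could adopt. By Corollary \ref{cor mon det} (equivalently Corollary \ref{cor monodromy class}), the locally trivial deformation type of a relative Picard rank one \Kthreen-type contraction is determined by the monodromy orbit of the rank-two lattice $\hyplat\subset\tilde\Lambda_{K3}$ associated to the wall; by \cite{BM} any such $\hyplat$ contains $v$ and a class $a$ with $0\leq(v,a)\leq (v,v)/2$ and $(a,a)\geq -2$. It therefore suffices to realize every such lattice, up to monodromy, by one of the specific models covered by \cite{AS}. The paper does this explicitly: choose a $K3$ surface $S$ with $\Pic(S)\cong\hyplat$ (possible since $\hyplat$ embeds primitively into $\Lambda_{K3}$ by \cite{Ni}), let $D$ and $A$ be the divisor classes corresponding to $v$ and $a$, arrange $D-\epsilon A$ ample so that $D$, $A$, $D-A$ are effective and $|D|$ contains reducible curves, and then exhibit strictly $H_0$-semistable pure one-dimensional sheaves of Mukai vector $v_0=(0,D,\delta)$ whose wall-crossing realizes $\hyplat$ with $v_0\mapsto v$. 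Since these are exactly the Gieseker moduli spaces treated by Arbarello--Sacc\`a, the quiver-variety germ statement holds there, and it transfers to $X$ by local triviality exactly as in your second paragraph. In short, your architecture (deform, then transport germs) is correct, but the known input must be the Arbarello--Sacc\`a models rather than arbitrary Bridgeland contractions, and the realization-of-lattices argument is the missing ingredient that makes the reduction land on those models.
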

\begin{proof}By \cite[Theorem 1.1]{AS}, the statement is known for Gieseker moduli spaces $X=M_{H_0}(v_0)$ where $v_0$ is a primitive Mukai vector of a pure 1-dimensional sheaf on a $K3$ surface $S$ with $v_0^2\geq 2$ and $H_0$ is a nongeneric polarization.  Let $H$ be a generic polarization such that $M_H(v_0)\to M_{H_0}(v_0)$ is a symplectic resolution, and set $\mathcal{H}_0=\mathcal{H}(M_H(v_0)/M_{H_0}(v_0))$.  

By the previous lemma, we just need to show any pointed lattice $(\mathcal{H},v)$ arising from a relative Picard rank one contraction $\pi:Y\to X$ is abstractly isomorphic to some $(\mathcal{H}_0,v_0)$.  By Proposition \ref{proposition def to bridge} we may assume $\pi:Y\to X$ is a Bridgeland contraction, and then by \cite[Theorem 12.1]{BM} we must have that $\hyplat$ contains a class $a\in\hyplat$ with $0\leq(v,a)\leq (v,v)/2$ and $(a,a)\geq -2$.  Let $S$ be a $K3$ surface such that
\begin{enumerate}
\item $\Pic(S)\cong \hyplat$.  Let $D\in\Pic(S)$ correspond to $v$ and $A$ to $a$.
\item $D-\epsilon A$ is ample for $\epsilon>0$ sufficiently small.
\end{enumerate}
Such a surface $S$ exists since $\hyplat$ embeds primitively into $\Lambda_{K3}$ \cite[Theorem 1.1.2]{Ni}.  Then $D,A$ and $D-A$ are effective by the conditions on $a$, so $|D|$ contains reducible curves.  Choose an ample $H_0$ and $\delta,\alpha\in\Z$ nonzero such that
\[\frac{\delta}{H_0.D}=\frac{\alpha}{H_0.A}.\]
Then there are strictly $H_0$-semistable sheaves of Mukai vector $v_0=(0,D,\delta)$.  Indeed, taking an irreducible curve $C_1\in|A|$ (respectively $C_2\in|D-A|$) as well as a line bundle $L_1$ on $C_1$ (respectively $L_2$ on $C_2$) with Mukai vector $(0,A,\alpha)$ (respectively $(0, D-A,\delta-\alpha)$), any extension of $L_1$ by $L_2$ will be a strictly semistable sheaf with Mukai vector $v_0$.

Set $M_\pm=M_{H^\pm}(v_0)$ for $H^\pm=H_0\pm \epsilon D$, and $M_0=M_{H_0}(v_0)$.  We conclude by noticing that the lattice $\hyplat_0$ in $\tilde H(S,\Z)=\tilde\Lambda(M_+,\Z)$ associated to the wall crossing 
\[\xymatrix{
M_+\ar[dr]\ar@{-->}[rr]&&M_-\ar[ld]\\
&M_0&
}\]
is the saturation of $\left\langle (0,D,\delta), (0,A,\alpha)\right\rangle$, which is isomorphic to $\mathcal{H}$ and the isomorphism takes $v_0$ to $v$.
\end{proof}

The proof of Proposition \ref{nakajima} gives explicit models for every relative Picard rank one \Kthreen-type contraction among compactified Jacobians of linear systems on $K3$ surfaces.   Knutsen, Lelli-Chiesa, and Mongardi \cite{KLCM} have also used compactified Jacobians to construct contractible ruled subvarieties of \Kthreen-type manifolds, and analyze the geometry more closely.  Models for such contractions have further been treated by Hassett--Tschinkel \cite{HT15}, where it is shown that every wall $\hyplat$ can be realized on the Hilbert scheme of points for a projective $K3$ surface of Picard rank one.

The Bayer--Macr\`i picture strongly suggests that the answer to the following question is affirmative:
\begin{question}Let $\pi:Y\to X$ be a relative Picard rank one \Kthreen-type contraction, and let $E\subset Y$ be an irreducible component of the exceptional locus.  Is the generic fiber of the map $E\to X$ isomorphic to $\P^{\codim E}$?

\end{question}
Indeed, for a Bridgeland moduli space $Y=M_{\sigma_+}(v)$ and a contraction induced by a wall-crossing, the Harder--Narasimhan filtration of a generic point $[F]\in E$ with respect to a generic nearby stability condition $\sigma_-$ on the other side of the wall is often of the form
\begin{equation}0\to A\to F\to B\to 0\label{HNfilt}\end{equation}
for $A,B$ $\sigma_0$-stable.  All such extensions are $\sigma_+$-stable, and this yields a $\P^k=\P\Ext^1(B,A)$ fiber that is contracted.  Moreover, setting $a=v(A)$ and $b=v(B)$,
\begin{align*}\dim E&=k+\dim M_{\sigma_0}^\st(a)+\dim M^\st_{\sigma_0}(b)\\
&=\left((a,b)-1\right)+\left(a^2+2\right)+\left(b^2+2\right)\\
&=\left(v^2+2\right)-k\end{align*}
so $k=\codim E$.  Thus, in this case, we are done if the Harder--Narasimhan filtration of the general point of $E$ has the form \eqref{HNfilt} for fixed\footnote{Note the constancy of the Mukai vectors is automatic if a universal family exists over $E$, by the existence of Harder--Narasimhan filtrations in families.} $a$ and $b$.  By Lemma \ref{cor monodromy class} and \cite{LP2}, it would be sufficient to consider one model in each monodromy orbit, and many special cases have been established previously, see \cite{HT15,KLCM}.  

It is also not difficult to prove the following special case, since for ADE singularities the singularity determines the general fiber of the exceptional divisor of the (unique) symplectic resolution:

\begin{proposition}\label{proposition generic fiber}Let $\pi:Y\to X$ be a relative Picard rank one \Kthreen-type contraction, and let $E\subset Y$ be an irreducible divisorial component of the exceptional locus.  Then the exceptional locus is irreducible and the generic fiber of the map $E\to X$ is $\P^1$.
\end{proposition}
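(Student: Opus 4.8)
The plan is to combine the rational connectedness statement of Proposition \ref{proposition cmsb} with the transverse ADE description of Corollary \ref{corollary from kaledin} and the crepancy of $\pi$, the point being that \emph{genuine} rational connectedness of the fibre (as opposed to mere connectedness) is exactly what pins it down to a single smooth rational curve. Concretely, I first carry out the dimension count. Since $E$ is a divisorial component, $\codim_Y E=1$ and $\dim E=2n-1$. By Proposition \ref{proposition cmsb} the subvariety $E$ is algebraically coisotropic with coisotropic fibration $\pi|_E\colon E\to B:=\pi(E)$, so by the definition of an algebraically coisotropic subvariety $\dim B=\dim Y-2\,\codim_Y E=2n-2$. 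Hence a general fibre $F$ of $\pi|_E$ has dimension $1$. Moreover Proposition \ref{proposition cmsb} asserts that $F$ is rationally connected; as a rationally connected variety is irreducible, $F$ is an irreducible curve whose normalisation is $\P^1$.

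Next I would describe $\pi$ locally over a general point $b\in B\subset X^\sing$. By Corollary \ref{corollary from kaledin} the germ $(X,b)$ is isomorphic to $(\C^{2n-2},0)\times(S,0)$ for an ADE surface singularity $(S,0)$. Both $Y$ and $X$ are symplectic, so $\pi$ is crepant, i.e.\ $K_Y=\pi^*K_X$; restricting $\pi$ to the preimage of a general two-dimensional slice transverse to $B$ at $b$ therefore produces a crepant resolution of the surface germ $(S,0)$, which is necessarily its minimal resolution. Consequently the reduced fibre $\pi^{-1}(b)_{\mathrm{red}}$ is the exceptional divisor of the minimal resolution of $(S,0)$, that is, a tree of smooth rational $(-2)$-curves whose dual graph is the Dynkin diagram of $S$; in particular $F$ is a union of some of these smooth rational curves.

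Finally I would match the two descriptions. A subcurve of such a Dynkin configuration is irreducible exactly when it consists of a single node, and each node is a smooth rational $(-2)$-curve. Since $F$ is irreducible (being rationally connected), it must be a single such curve, and therefore $F\cong\P^1$. The step I expect to be the main obstacle is the local analysis: one must verify carefully that a general transverse slice meets the singular locus in an isolated ADE point, that its preimage in the smooth manifold $Y$ is again smooth, and that the induced resolution is crepant, so that the reduced fibre is precisely the Dynkin configuration of $(-2)$-curves. It is worth stressing that it is the honest rational connectedness of $F$ furnished by Proposition \ref{proposition cmsb}---and not merely the irreducibility of the divisor $E$---that excludes the a priori possibility of a transverse $A_2$-singularity whose two exceptional curves are swapped by the monodromy of $B$: such a fibre would be a reducible union of two copies of $\P^1$ meeting in a point, and hence would fail to be rationally connected.
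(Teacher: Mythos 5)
Your dimension count and the transverse ADE slice analysis are fine, but the pivotal step fails: you cannot extract irreducibility of the general fibre $F$ of $E\to B$ from Proposition \ref{proposition cmsb}. What the proof of that proposition actually constructs is a family of irreducible rational curves of minimal $H$-degree, lying in the fibres of $E\to B$ and passing through the general point of $E$; this shows that the irreducible component of $F$ through a general point is a rational curve, but it in no way excludes $F$ from having two such components meeting in a point. That is precisely the configuration you must rule out: a transversal $A_2$-singularity whose two exceptional curves are interchanged by the monodromy of $B$ (which is exactly what makes $E$ irreducible and the relative Picard rank one). Reading ``rationally connected'' in Proposition \ref{proposition cmsb} as irreducibility of the scheme-theoretic general fibre over-reads the statement beyond what its proof delivers: the rational-curve arguments there (deformation of rational curves in dimension at least $2n-2$, the Rigidity Lemma, bend-and-break) use only the symplectic form and projectivity, so they apply verbatim to Wierzba's relative Picard rank one $A_2$-contraction \cite[\S 1.4, Example 2]{Wierzba}, cited right after Corollary \ref{corollary a2}, whose general fibre is reducible. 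A further warning sign is that your argument never uses the \Kthreen-type hypothesis: if it were correct, it would settle, for \emph{all} irreducible symplectic manifolds, the question the paper explicitly poses as open just before Corollary \ref{corollary a2}, namely whether $A_2$-contractions of relative Picard rank one exist on them.

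Excluding the $A_2$ configuration is the entire content of the proposition, and the paper does it with input special to the \Kthreen deformation class. Its proof first reduces, via Proposition \ref{proposition def to bridge} together with \cite[Theorem 1.1]{LP2}, to the case where $\pi$ is a Bridgeland wall-crossing contraction $M_{\sigma_+}(v)\to M$, and then invokes Bayer--Macr\`i's classification of walls \cite{BM}: after passing to a wall that is not totally semistable by spherical reflections, a generic point of $E$ corresponds to a $\sigma_+$-stable extension of a $\sigma_0$-stable object $A$ by a $\sigma_0$-stable spherical object $S$ (the cases involving isotropic classes, including Hilbert--Chow, are handled separately), and the unique contracted curve through that point is the single $\P^1=\P\Ext^1(A,S)$. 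It is this wall-crossing analysis, not any soft local or deformation-of-rational-curves statement, that rules out the reducible fibre; your local picture correctly reduces the proposition to that exclusion but does not supply it.
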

\begin{proof}
First suppose there is another irreducible component $E'$ of the exceptional locus, and let $C$ (resp. $C'$) be an integral curve in the general fiber of $E\to X$ (resp. $E'\to X$).  On the one hand we must have $E.C'\geq 0$, but on the other hand we must have $\widetilde{q}([E])\in \R_{>0}[C]$ for instance by deforming to a contraction $\pi':Y'\to X'$ for which $X'$ is of Picard rank zero and using both $q([E],[E])<0$ and $E.C<0$.  Thus, $[C']$ cannot be a positive multiple of $[C]$, a contradiction.   

In the notation of the proof of Proposition \ref{nakajima}, we may assume the contraction is of the form $\pi: M_H(v_0)\to M_{H_0}(v_0)$ where $v_0=(0,D,\delta)$.  $M_H(v_0)$ is stratified by the decomposition of $D$ into the supports of the Harder--Narasimhan factors, with the generic stratum corresponding to the trivial decomposition $D=D$ (even if $D$ has a fixed part).  The generic point of $E$ corresponds to a decomposition $D=D_1+D_2$; let $F$ be the generic fiber of $E\to X$.  For each component $F_0$ of $F$, any two generic points of $F_0$ have the same $H_0$-Harder--Narasimhan factors, which are sheaves $A_1,A_2$ supported on $D_1,D_2$ respectively.  There is a $\P^n$ of such sheaves---namely, the extensions $\P\Ext^1(A_1,A_2)$ (or in the reverse direction), all of which are semistable---which must be a $\P^1$ as its necessarily a curve.  Thus, all points in $F$ must be extensions of this form, and so $F\cong \P^1$. 
\end{proof}

In \cite{BB} the first author classified contractions at the other extreme, namely those for which the exceptional locus contains a Lagrangian $\P^n$.

As an application, assume that $\tX$ is a symplectic $2n$-fold which admits a divisorial contraction $\pi:\tX\to X$ of relative Picard rank one such that $X$ has transversal $A_2$ singularities. We call this an $A_2$-contraction. Note that while $ADE$ singularities admit unique symplectic resolutions, in the relative Picard rank one setting the monodromy action on the set of components of the general fiber of the exceptional locus yields a group of automorphisms of the $ADE$ graph in question acting transitively on the nodes, so only $A_1$ and $A_2$ singularities remain as possibilities.  We would like to know whether an $A_2$ contraction exists if $\tX$ is an irreducible symplectic manifold.

\begin{corollary}\label{corollary a2}
Let $\tX$ be a \Kthreen-type manifold. Then $\tX$ does not admit any $A_2$-contraction of relative Picard rank one.
\end{corollary}

Note that there are however examples of $A_2$-contractions of relative Picard rank one of smooth and projective symplectic varieties. See e.g. \cite[\S 1.4, Example 2]{Wierzba} for an explicit construction.


\end{document}